\let\hat\widehat
\newtheorem{thm}{Theorem}
\newtheorem{lem}[thm]{Lemma}
\newtheorem{cor}[thm]{Corollary}
\newtheorem{thm2}{Theorem}
\newtheorem{remark}[thm2]{Remark}
\newcommand\K{\mathbb{K}}
\newcommand\R{\mathbb{R}}
\newcommand\G{\mathbb{G}}
\newcommand\E{\mathbb{E}}
\renewcommand\P{\mathbb{P}}
\newcommand\norm[1]{\|#1\|}
\newcommand\Haus{{\sf Haus}}
\newcommand\Vol{{\sf Vol}}
\newcommand\rand{{\sf rand}}
\newcommand\cL{{\cal L}}
\newcommand\cX{{\cal X}}
\newcommand\cC{{\cal C}}
\newcommand\cD{{\cal D}}
\newcommand\cE{{\cal E}}
\newcommand\MS{{\sf MS}}
\newcommand\MSR{{\sf MSR}}
\newcommand\Leb{{\sf Leb}}
\newcommand{\argmin}{\mathop{\mathrm{argmin}}}
\DeclareMathOperator{\dest}{dest}
\newskip\beforeproofvskip
\newskip\afterproofvskip
\def\prooftag{Proof}
\def\proofskip{\enspace}
\def\proof{\@ifnextchar[{\@@proof}{\@proof}}  %] for emacs matching
\def\@startproof{\par\vskip\beforeproofvskip\leavevmode}
\def\@proof{\@startproof{\scshape\prooftag.}\proofskip}
\def\@@proof[#1]{\@startproof {\scshape\prooftag #1.}\proofskip}
\let\hat\widehat
\let\tilde\widetilde
\begin{document}

\begin{frontmatter}

\title{Statistical Inference Using the Morse-Smale Complex}%\protect\thanksref{T1}}
\runtitle{Inference using the Morse-Smale}
%\thankstext{T1}{Footnote to the title with the `thankstext' command.}

\begin{aug}
  \author{Yen-Chi Chen\ead[label=e1]{yenchic@uw.edu}},
  \and
  \author{Christopher R. Genovese\ead[label=e2]{genovese@stat.cmu.edu}},
  \and 
  \author{Larry Wasserman\ead[label=e3]{larry@stat.cmu.edu}}
  
    \address{University of Washington,\\
    Department of Statistics\\
  Box 354322,\\ 
  Seattle, WA 98195 \\
           \printead{e1}}

  \address{Carnegie Mellon University,\\
    Department of Statistics\\
      5000 Forbes Avenue,\\ 
  Pittsburgh, PA 15213 \\
           \printead{e2,e3}}

%  \thankstext{t2}{Footnote to the first author with the `thankstext' command.}

  \runauthor{Chen et al.}

\end{aug}

\begin{abstract}
The Morse-Smale complex of a function $f$
decomposes the sample space into cells
where $f$ is increasing or decreasing.
When applied to nonparametric density estimation and regression,
it provides a way to represent, visualize, and compare multivariate functions.
In this paper, 
we present some statistical results on estimating Morse-Smale complexes.
This allows us to derive new results for two existing methods:
mode clustering and Morse-Smale regression.
We also develop two new methods based on the Morse-Smale complex:
a visualization technique for multivariate functions
and a two-sample, multivariate hypothesis test.
\end{abstract}

\begin{keyword}[class=MSC]
\kwd[Primary ]{62G20}
\kwd[; secondary ]{62G86}
\kwd{62H30}
\end{keyword}

\begin{keyword}
\kwd{nonparametric estimation}
\kwd{mode clustering}
\kwd{nonparametric regression}
\kwd{two sample test}
\kwd{visualization}
\end{keyword}

%\tableofcontents

\end{frontmatter}

\section{Introduction}
\label{sec::intro}

%{\bf YEN-CHI: thanks the referees somewhere}

Let $f$ be a smooth, real-valued function defined on a compact set $\K\in \R^d$.
In this paper, $f$ will be a regression function or a density function.
The Morse-Smale complex of $f$ is a partition of $\K$ based on the gradient flow induced by $f$.
Roughly speaking, the complex consists of sets, called \emph{crystals} or \emph{cells},
comprised of regions where $f$ is increasing or decreasing.
Figure \ref{Fig::ex_MS} shows the Morse-Smale complex for a two-dimensional function.
The cells are the intersections of the
basins of attractions (under the gradient flow) of the function's maxima and minima.
The function $f$ is piecewise monotonic over cells with respect to some directions.
In a sense, the Morse-Smale complex provides a generalization of isotonic regression.

Because the Morse-Smale complex represents a multivariate function in terms
of regions on which the function has simple behavior,
the Morse-Smale complex has useful applications in statistics, 
including in clustering, regression, testing, and visualization.
For instance,
when $f$ is a density function, the basins of attraction of $f$'s modes
are the (population) clusters for density-mode clustering
(also known as mean shift clustering \citep{fukunaga1975estimation, chacon2015population}),
each of which is a union of cells from the Morse-Smale complex.
Similarly,
when $f$ is a regression function, the cells of the Morse-Smale complex
give regions on which $f$ has simple behavior.
Fitting $f$ over the Morse-Smale cells provides a generalization of
nonparametric, isotone regression;
\cite{gerber2013morse} proposes such a method.
The Morse-Smale representation of a multivariate function $f$
is a useful tool for visualizing $f$'s structure,
as shown by \cite{gerber2010visual}.
In addition, suppose we want
to compare two multi-dimensional datasets
$X=(X_1,\ldots, X_n)$
and $Y=(Y_1,\ldots, Y_m)$.
We start by forming the Morse-Smale complex
of $\hat p-\hat q$
where
$\hat p$ is density estimate from $X$ and
$\hat q$ is density estimate from $Y$.
Figure \ref{Fig::ex0} shows a visualization built from this complex.
The circles represent cells of the Morse-Smale complex.
Attached to each cell is a pie-chart showing what fraction of the cell
has $\hat p$ significantly larger than $\hat q$.
This visualization is a multi-dimensional extension of the method proposed 
for two or three dimensions in \cite{duong2013local}.

\begin{figure}
\center
\subfigure[Descending manifold]{
	\includegraphics[trim=0in 0in 0in 2.5in, clip,width=2.2in]{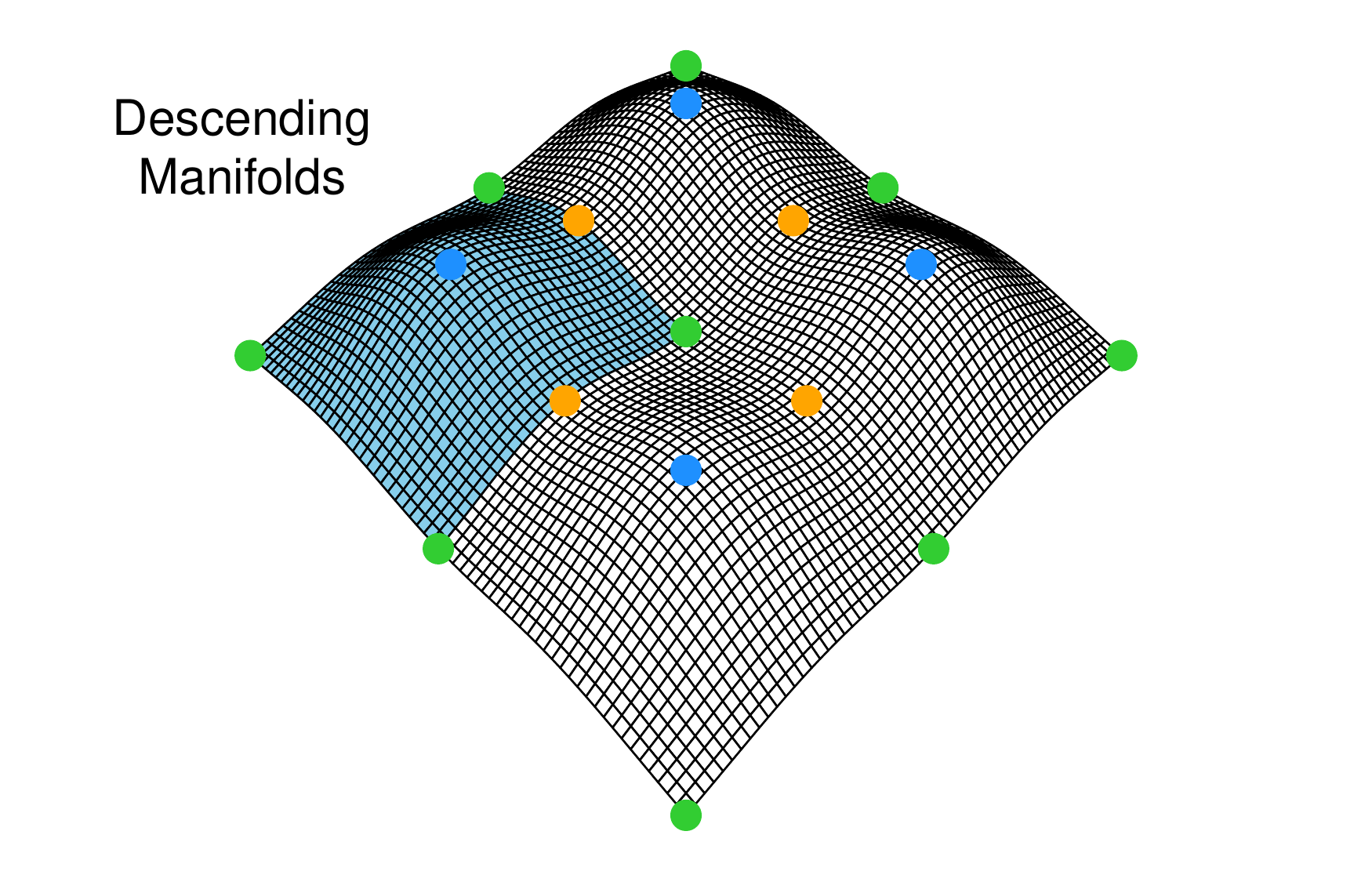}
}
\subfigure[Ascending manifold]{
	\includegraphics[trim=0in 0in 0in 2.5in,clip, width=2.2in]{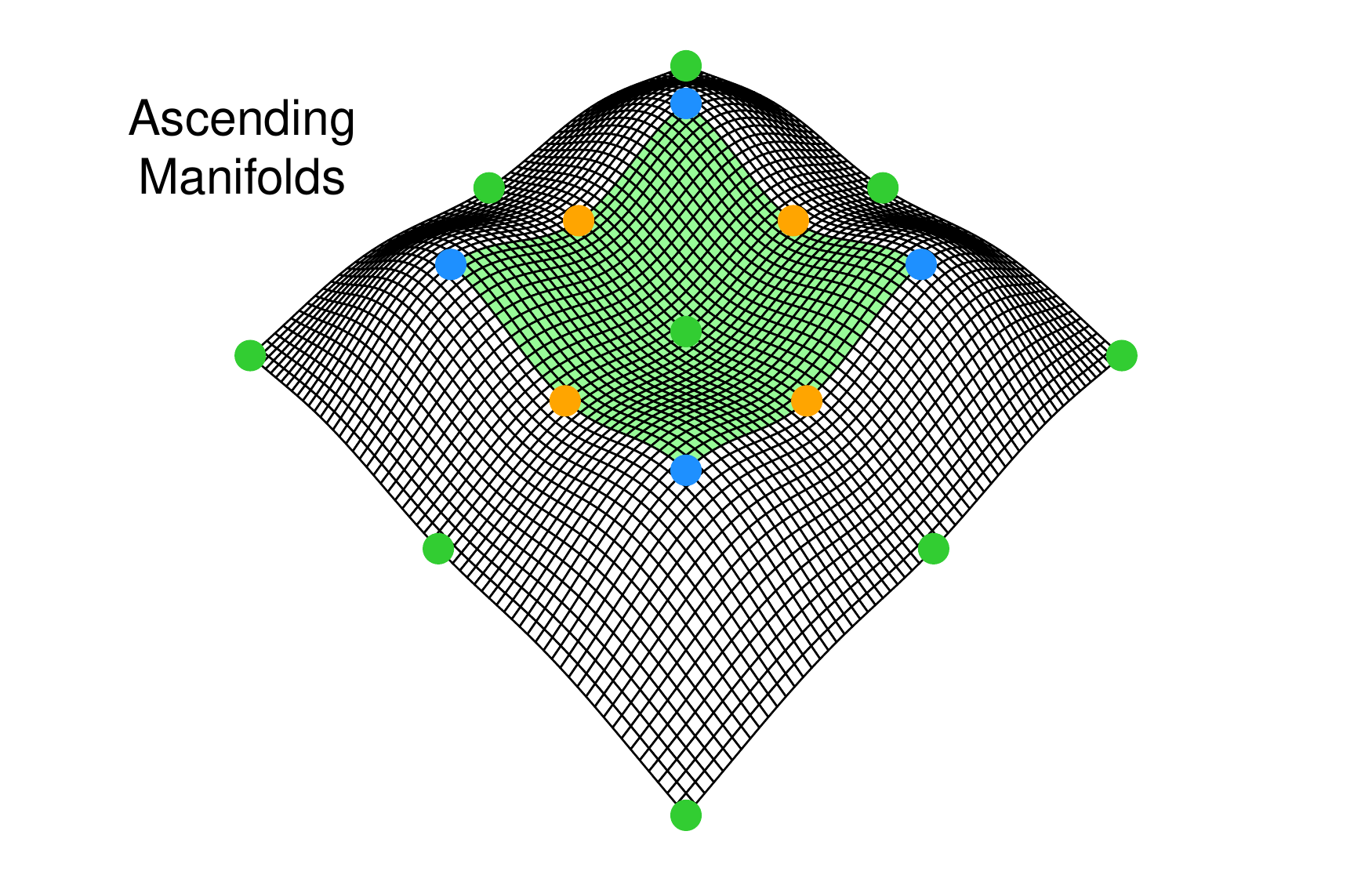}
}
\subfigure[$d$-cell]{
	\includegraphics[trim=0in 0in 0in 2.5in,clip, width=2.2in]{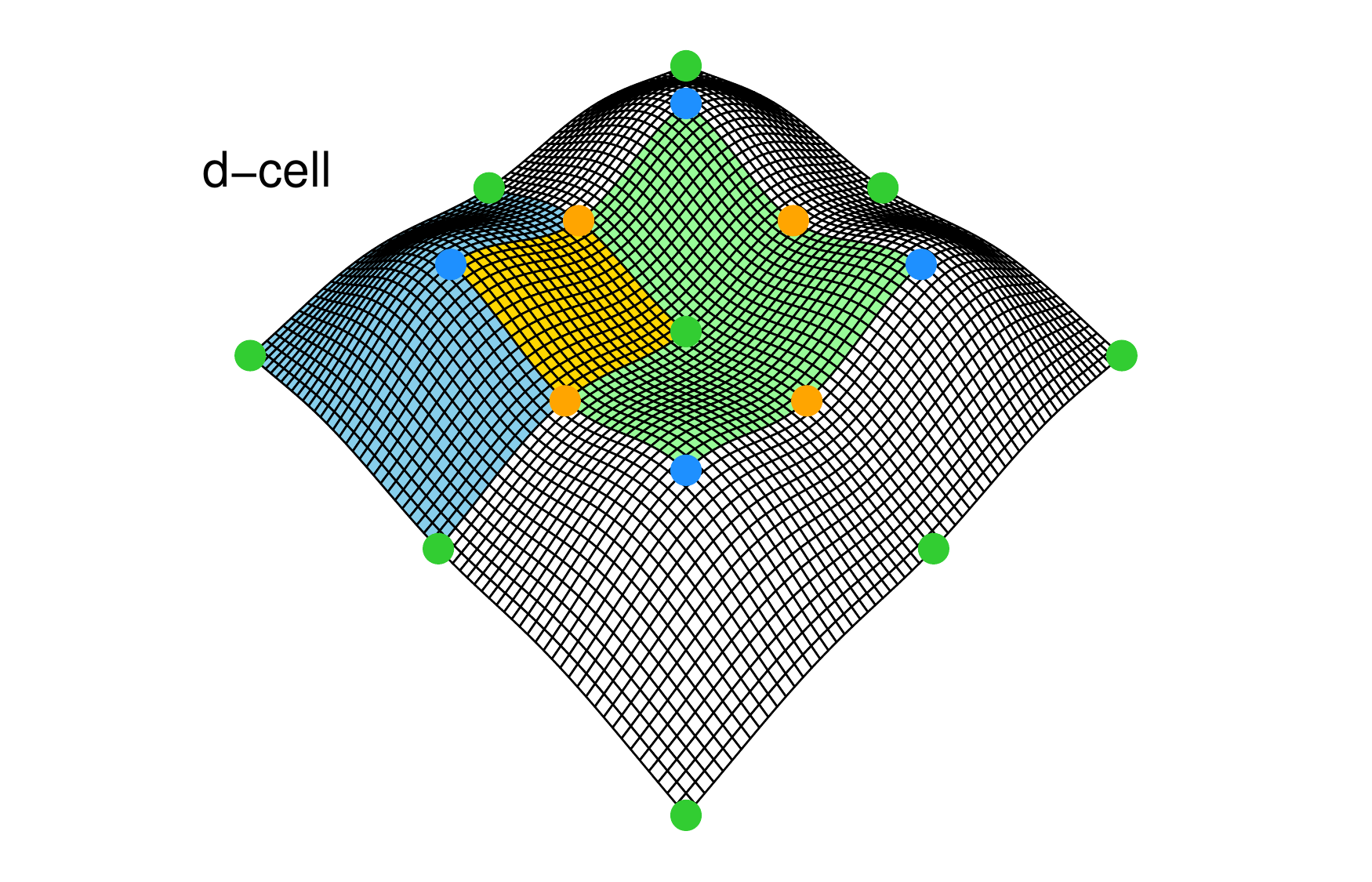}
}
\subfigure[Morse-Smale complex]{
	\includegraphics[trim=0in 0in 0in 2.5in,clip, width=2.2in]{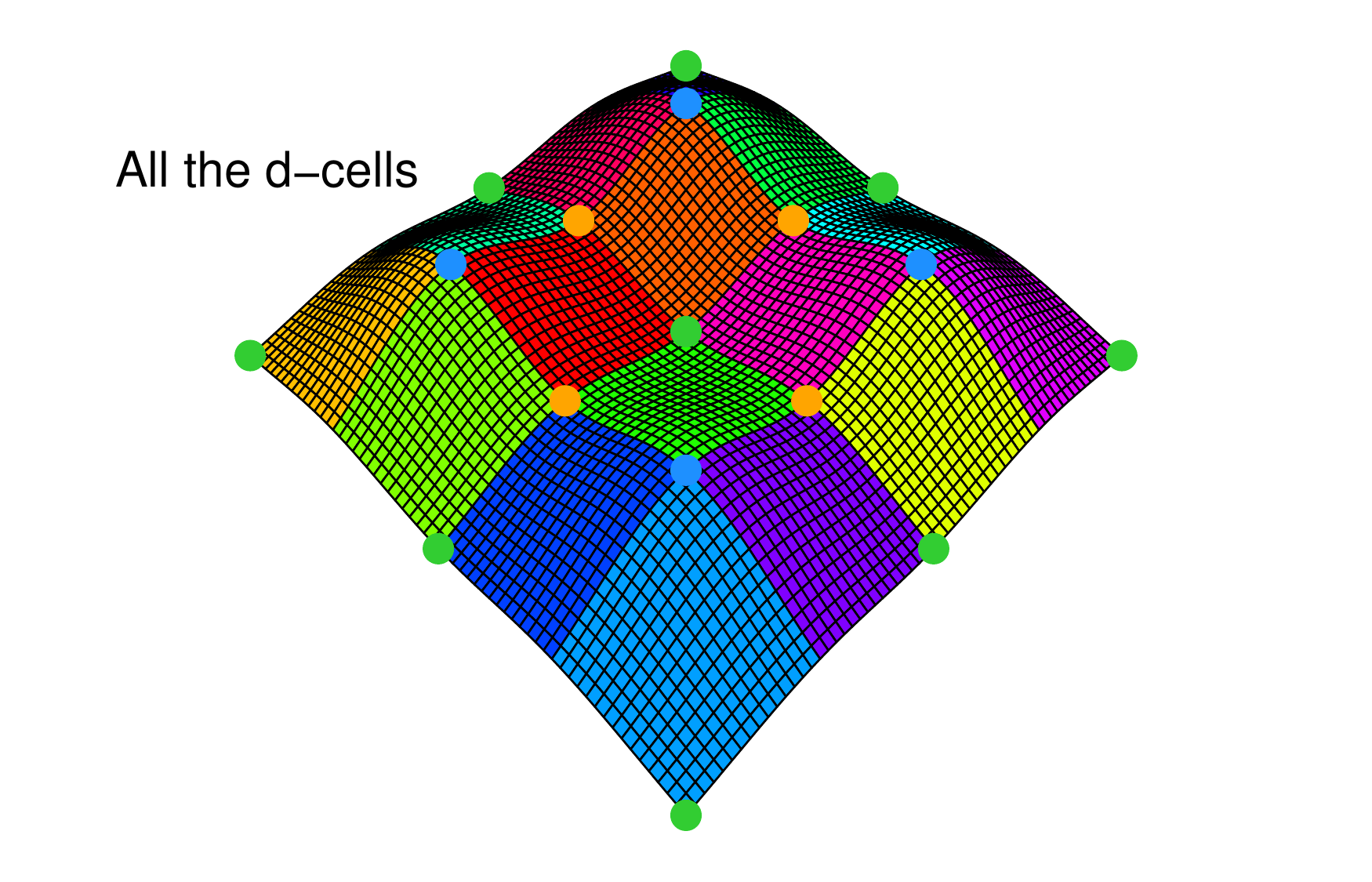}
}
\caption{An example of a Morse-Smale complex.
The green dots are local minima; the blue dots are local modes; the 
violet dots are saddle points.
Panels (a) and (b) give examples of descending $d$-manifolds (blue region)
and an ascending $0$-manifold (green region).
Panel (c) shows the corresponding $d$-cell (yellow region).
Panel (d) is shows all $d$-cells.
}
\label{Fig::ex_MS}
\end{figure}

\begin{figure}
\begin{center}
\includegraphics[scale=0.3]{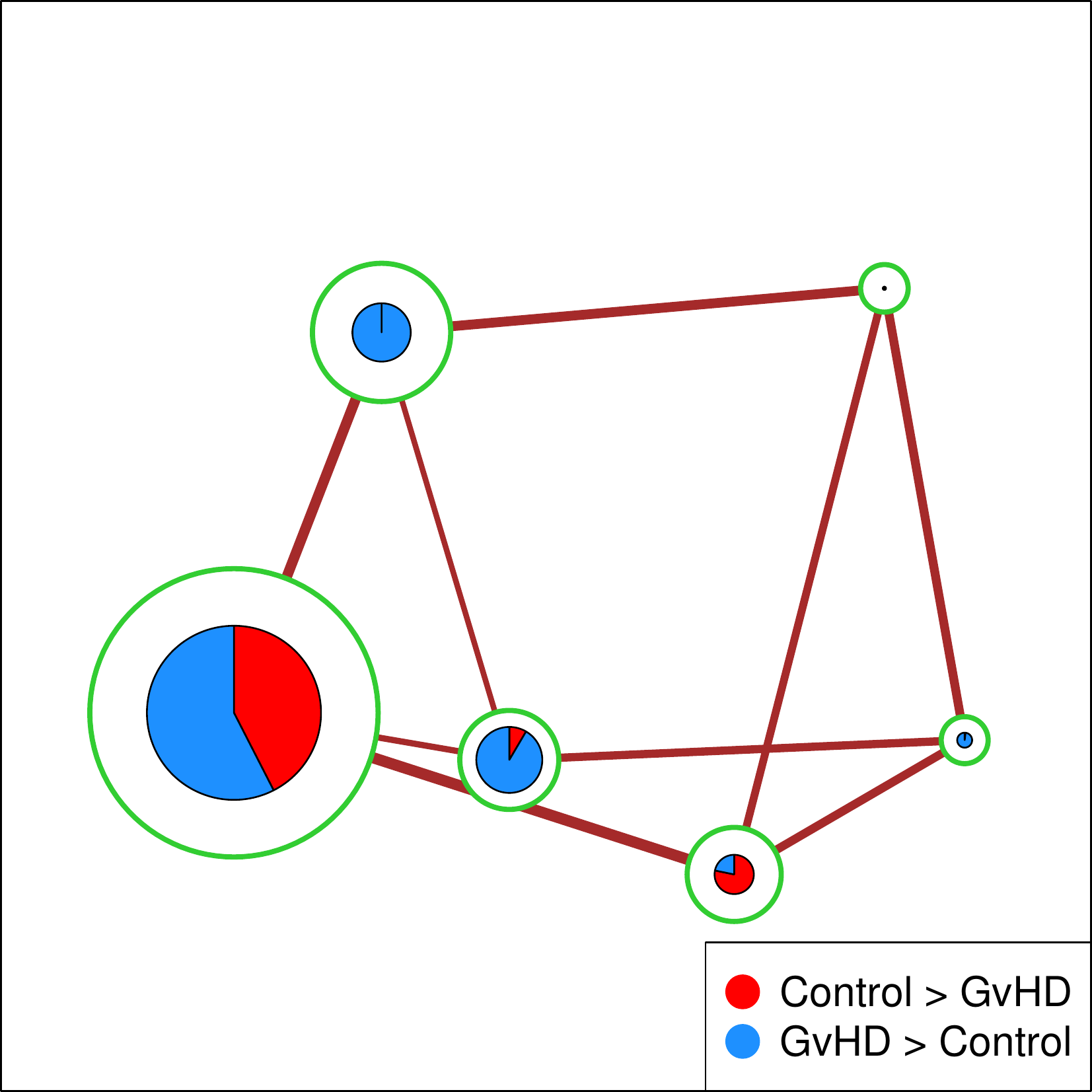}
\end{center}
\caption{Graft-versus-Host Disease (GvHD) dataset \citep{brinkman2007high}.
This is a $d=4$ dimensional dataset.
We estimate the density difference based on the kernel density estimator 
and find regions where the two densities are significantly different.
Then we visualize the density difference using the Morse-Smale complex.
Each green circle denotes a $d$-cell, which is a partition for the support $\K$.
The size of circle is proportional to the size of cell.
If two cells are neighborhors,
we add a line connecting them;
the thickness of the line denotes the amount of boundary they share.
The pie charts show the ratio of the regions within each cell 
where the two densities are significantly 
different from each other.
See Section \ref{sec::two} for more details.}
\label{Fig::ex0}
\end{figure}

For all these applications, the Morse-Smale complex needs to be estimated.
To the best of our knowledge, no theory has been developed for this estimation problem,
prior to this paper.
We have three goals in this paper:
to show that many existing problems can be cast in terms of the Morse-Smale complex,
to develop some new statistical methods based on the Morse-Smale complex, 
and to develop the statistical theory for estimating the complex.

\vspace{1cm}

\noindent
\emph{Main results.}
The main results of this paper are:
\begin{enumerate}
\item \emph{Consistency of the Morse-Smale Complex}.
We prove the stability of the Morse-Smale complex (Theorem~\ref{thm::Haus}) in the following sense:
if $B$ and $\tilde{B}$ are boundaries of the descending $d$-manifolds (or ascending $0$-manifolds) of $p$ and $\tilde{p}$
(defined in Section \ref{sec::morse}), then
$$
\Haus(B,\tilde{B}) = O\left(\|\nabla p -\nabla \tilde{p}\|_\infty\right).
$$

\item \emph{Risk Bound for Mode clustering (mean-shift clustering; section~\ref{sec::mode})}:
  We bound the risk of mode clustering in Theorem \ref{thm::mode}.

\item \emph{Morse-Smale regression (section~\ref{sec::MSR})}:
  In Theorems \ref{thm::MSR} and \ref{thm::MSR2},
  we bound the risk of Morse-Smale regression,
  a multivariate regression method proposed in \cite{gerber2010visual, gerber2011data, gerber2013morse}
  that synthesizes nonparametric regression and linear regression.

\item \emph{Morse-Smale signatures (section~\ref{sec::MSS})}: 
  We introduce a new visualization method for densities and regression functions.

\item \emph{Morse-Smale two-sample testing (section~\ref{sec::two})}: 
  We develop a new method for multivariate two-sample testing
  that can have good power.

\end{enumerate}

\emph{Related work.}
The mathematical foundations for the Morse-Smale complex
are from Morse theory
\citep{morse1925relations, morse1930foundations, milnor1963morse}.
Morse theory has many applications
including computer vision \citep{paris2007topological}, 
computational geometry \citep{cohen2007stability} and
topological data analysis \citep{chazal2014robust}.

Previous work on the stability of the Morse-Smale complex
can be found in \cite{chen2016comprehensive} and \cite{chazal2014robust}
but they only consider critical points rather than the whole Morse-Smale complex.
\cite{arias2016estimation} prove pointwise convergence
for the gradient ascent curves but this is not sufficient for
proving the stability of the complex because the convergence of complexes
requires convergence of multiple curves
and the constants in the convergence rate derived from \cite{arias2016estimation} vary from points to points
and some constants diverge when we are getting closer to the boundaries of complexes.
Thus, we cannot obtain a uniform convergence of gradient ascent curves directly based on their results.
%{\bf YEN-CHI: this last sentence is not clear. Can you say more precisely why their results do not imply
%convergence of the complexes?}
Morse-Smale regression and visualization were
proposed in
\cite{gerber2010visual, gerber2011data, gerber2013morse}.

The R code (Algorithm \ref{Alg::MSS}, \ref{Alg::vis}, and \ref{Alg::two})
used in this paper can be found at
\url{https://github.com/yenchic/Morse_Smale}.

\section{Morse Theory}	\label{sec::morse}

\begin{figure}
\center
	\includegraphics[width=1.5 in]{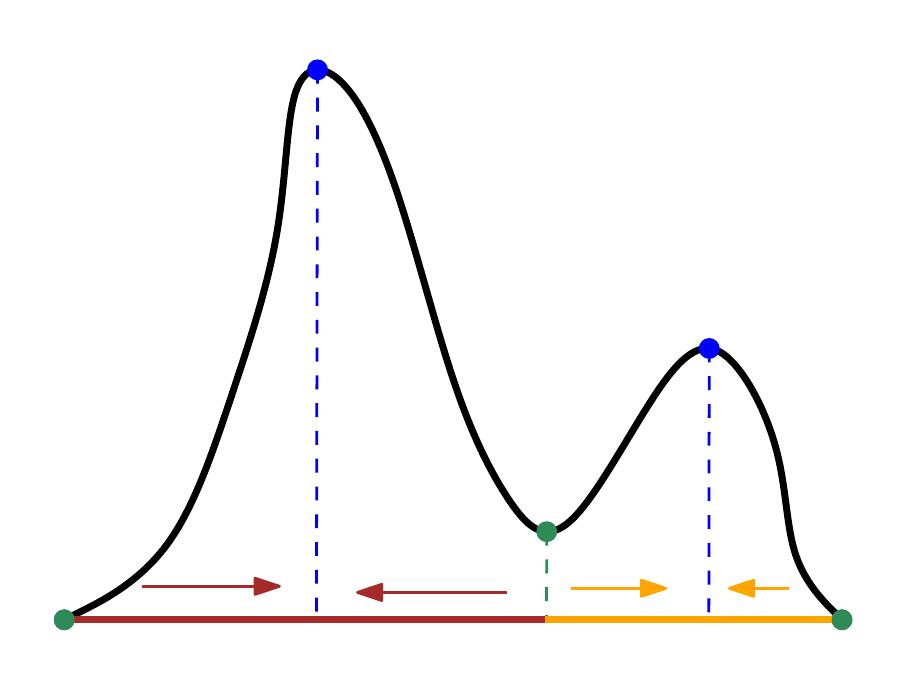}
	\includegraphics[width=1.5 in]{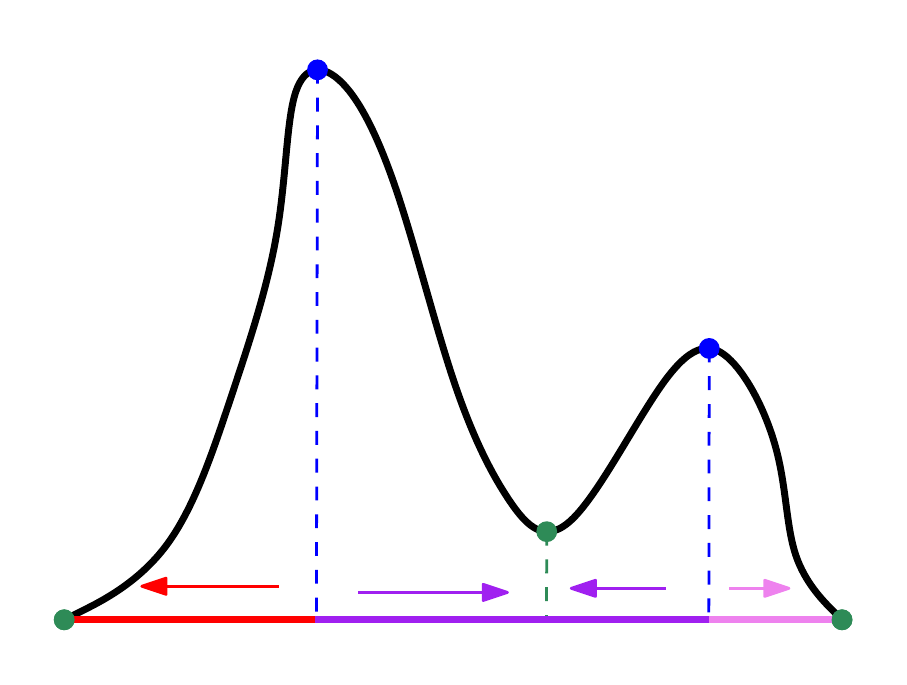}
	\includegraphics[width=1.5 in]{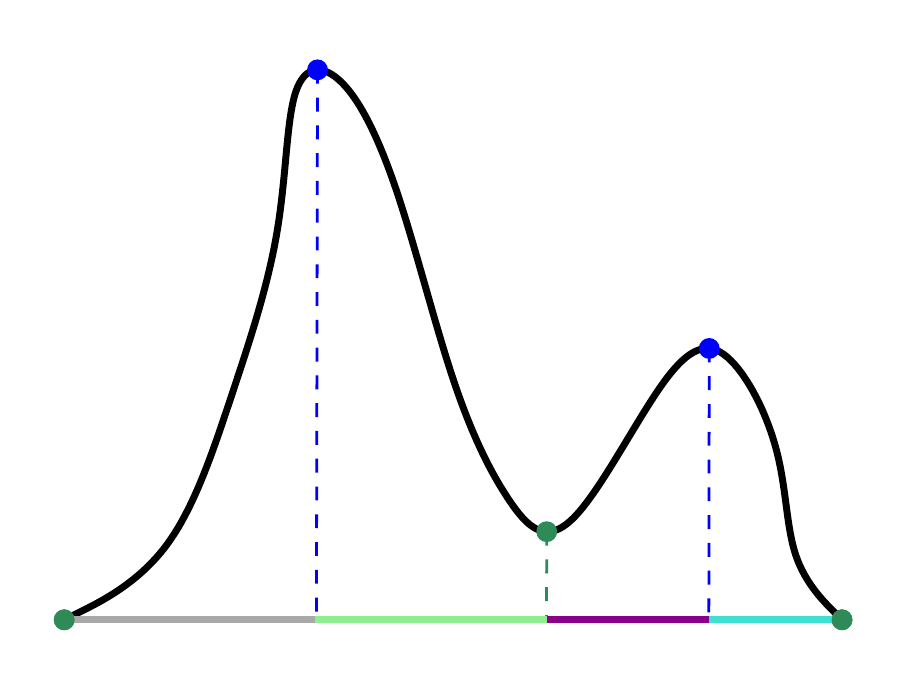}
\caption{A one dimensional example.
The blue dots are local modes and the green dots are local minima.
Left panel: the basins of attraction for two local modes
are colored by brown and orange.
Middle panel:
the basin of attraction (negative gradient)
for the local minima are colored by red, purple and violet.
Right panel:
The intersection of the basins, which are called $d$-cells.
}
\label{Fig::ex_1d}
\end{figure}

To motivate formal definitions,
we start with the simple, one-dimensional example depicted in Figure \ref{Fig::ex_1d}.
The left panel shows the sets associated with each local maximum
(i.e. the basins of attraction of the maxima).
The middle panel shows the sets associated with each local minimum.
The right panel show the intersections of these basins,
which gives the Morse-Smale complex defined by the function.
Each interval in the complex, called a cell,
is a region where the function is increasing or decreasing.

Now we give a formal definition.
Let $f:\mathbb{K}\subset\mathbb{R}^d\mapsto \mathbb{R}$
be a function with bounded third derivatives that is defined on a compact set $\K$.
Let $g(x)= \nabla f(x)$ and $H(x) = \nabla \nabla f(x)$ be the gradient and Hessian matrix
of $f$, respectively, and let $\lambda_j(x)$ be the $j$th largest eigenvalue of $H(x)$.
Define $\cC = \{x\in\mathbb{K}: g(x)=0\}$ to be the set of all $f$'s critical points,
which we call the \emph{critical set}.
Using the signs of the eigenvalues of the Hessian, the critical set $\cC$ can be partitioned
into $d+1$ distinct subsets $C_0,\cdots,C_d$,
where 
\begin{equation}
C_k = \{x\in\mathbb{K}: g(x)=0, \lambda_k(x)>0, \lambda_{k+1}(x)<0\}, \quad k=1,\cdots, d-1.
\end{equation}
We define $C_0, C_d$ to be the sets of all local maxima and minima
(corresponding to all eigenvalues being negative and positive respectively).
The set $C_k$ is called $k-$th order critical set.

A smooth function $f$ is called a \emph{Morse function}
\citep{morse1925relations, milnor1963morse}
if its Hessian matrix is non-degenerate at each critical point.
That is,
$|\lambda_j(x)|>0, \forall x\in \cC$ for all $j$.
In what follows we assume $f$ is a Morse function 
(actually, later we will assume further that $f$ is 
a Morse-Smale function).
%{\bf need to the transversality property to get a Morse-Smale function.}

\begin{figure}
\center
\subfigure[]{
	\includegraphics[trim=1in 0in 1in 2.5in, clip,width=2.2in]{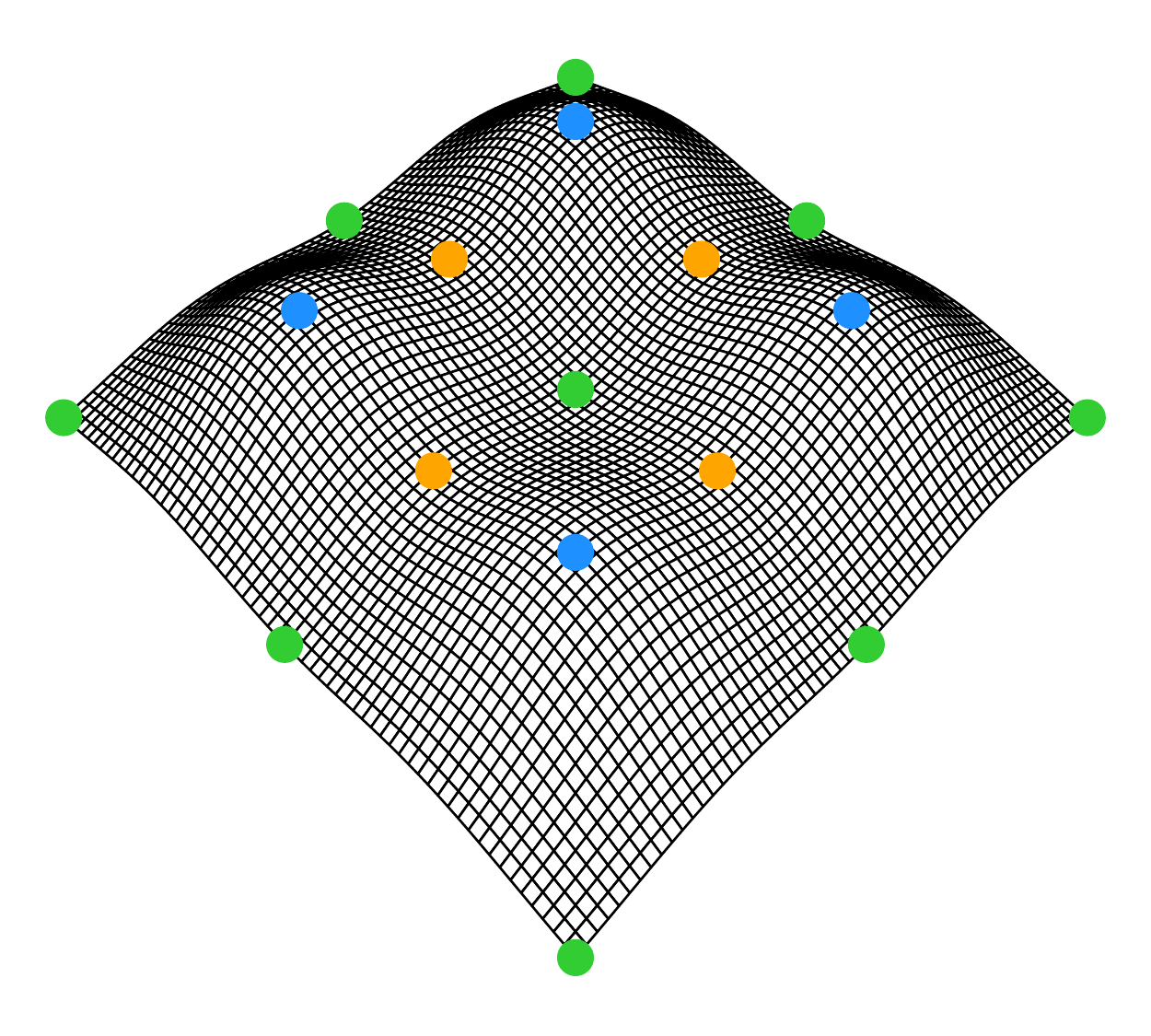}
}
\subfigure[]{
	\includegraphics[trim=1in 0in 1in 2.5in, clip,width=2.2in]{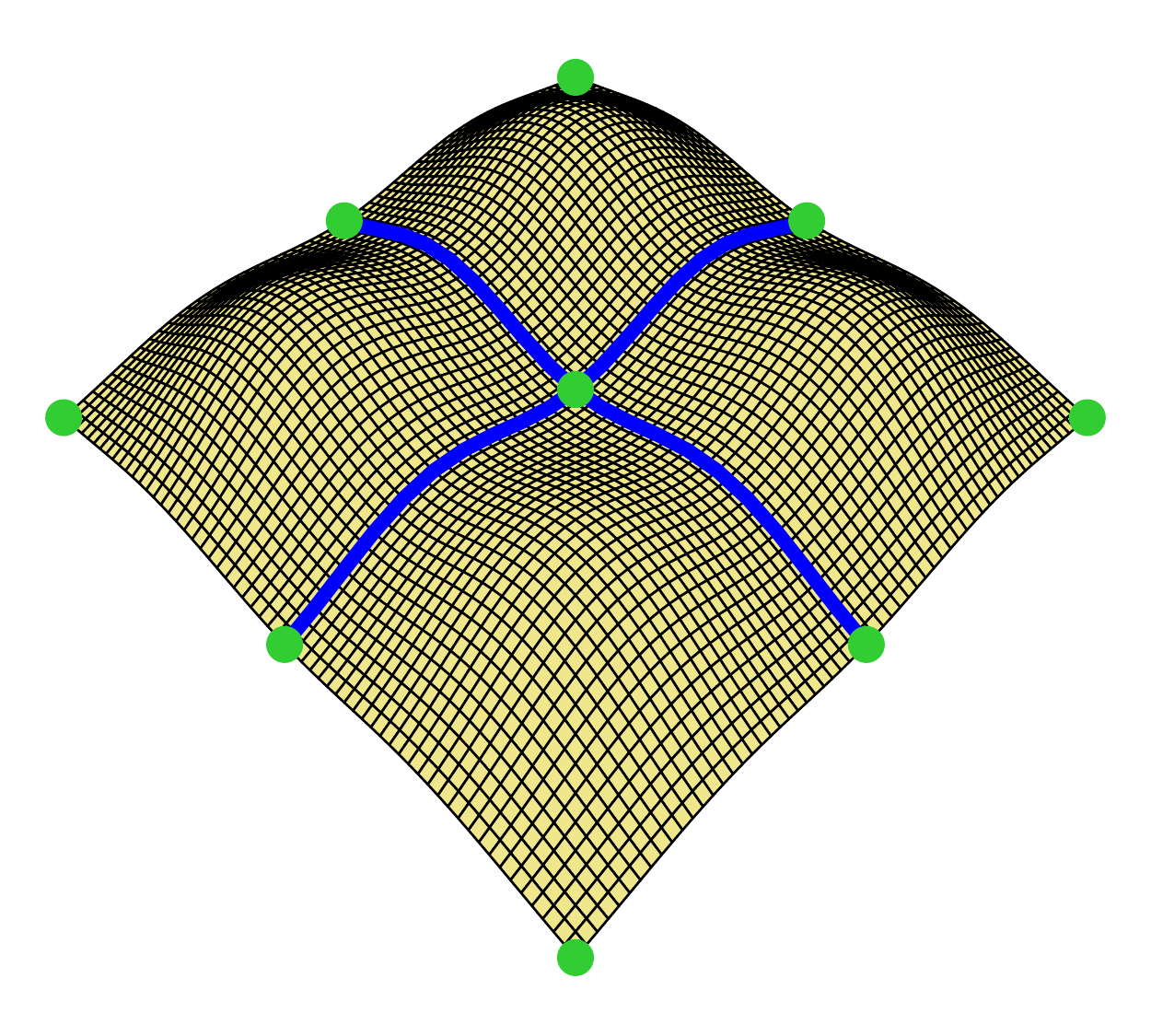}
}
\subfigure[]{
	\includegraphics[trim=1in 0in 1in 2.5in, clip,width=2.2in]{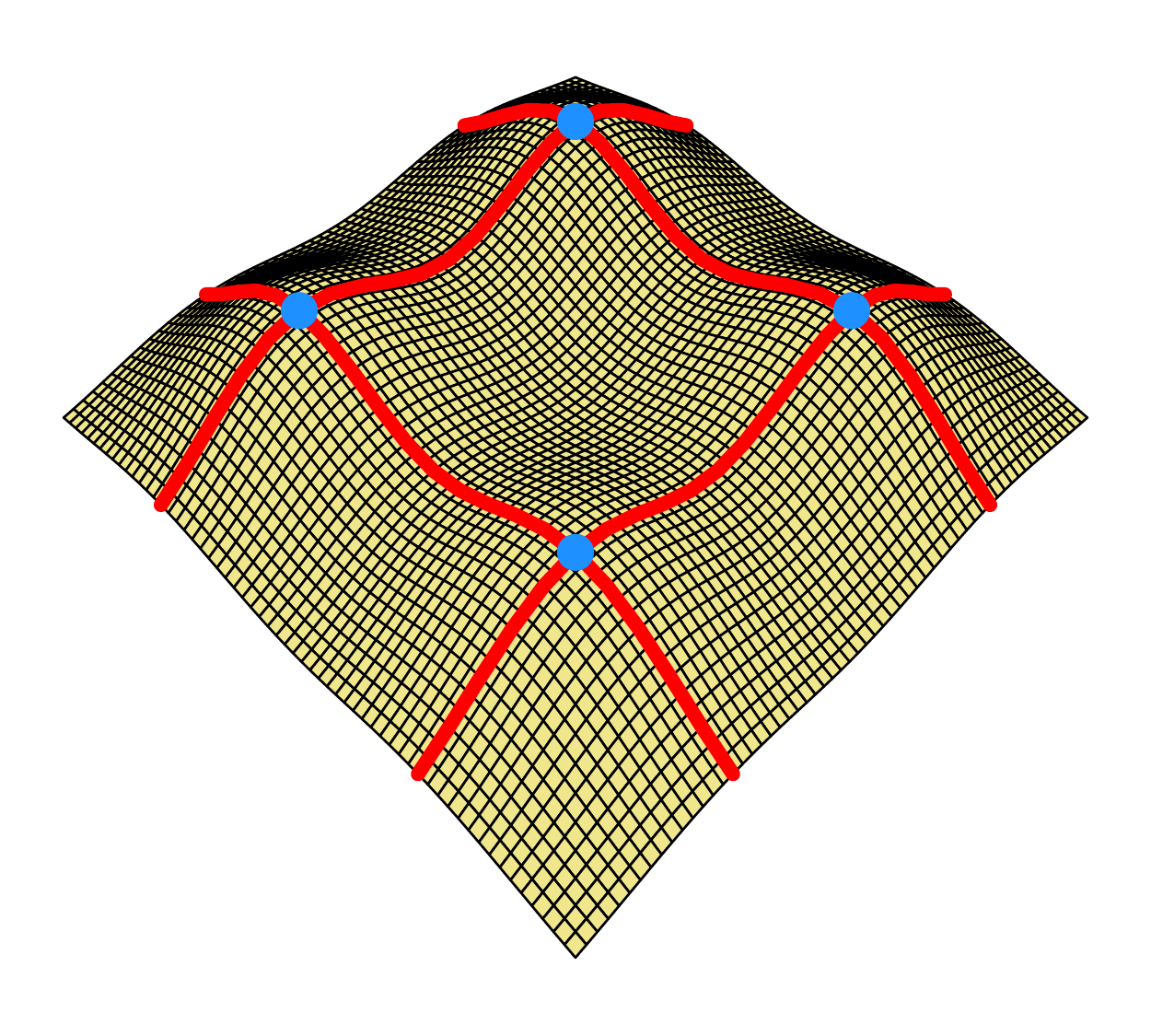}
}
\subfigure[]{
	\includegraphics[trim=1in 0in 1in 2.5in, clip,width=2.2in]{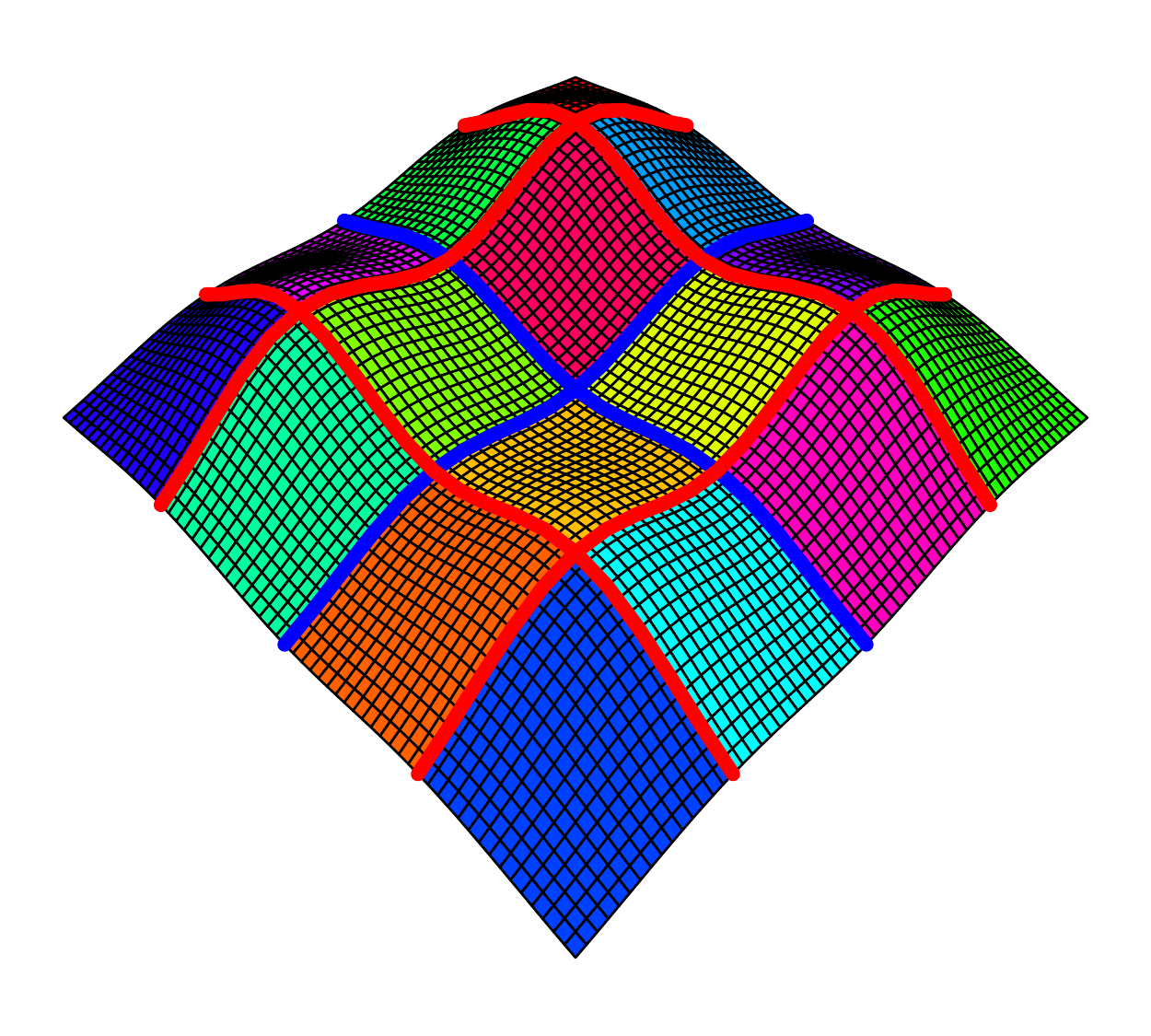}
}

\caption{Two-dimensional examples of critical points, descending manifolds, ascending manifolds, and $2$-cells.
This is the same function as Figure~\ref{Fig::ex_MS}.
(a): The set $C_k$ for $k=0,1,2$.
The four blue dots are $C_0$, the collection of local modes (each of them is $c_{0,j}$ some $j=1,\cdots, 4$).
The four orange dots are $C_1$, the collection of saddle points (each of them is $c_{1,j}$ for some $j=1,\cdots, 4$).
The green dots are $C_2$, the collection of local minima (each green dot is $c_{2,j}$ for some $j=1,\cdots,9$). 
(b): The set $D_k$ for $k=0,1,2$.
The yellow area is $D_2$ (each subregion separated by blue curves are $D_{2,j}, j=1,\cdots, 4$).
The two blue curves are $D_1$ (each of the 4 blue segments are $D_{1,j}, j=1,\cdots, 4$).
The green dots are $D_0$ (also $C_2$), the collection of local minima (each green dot is $D_{0,j}$ for some $j=1,\cdots,9$). 
(b): The set $A_k$ for $k=0,1,2$.
The yellow area is $A_0$ (each subregion separated by red curves are $A_{0,j}, j=1,\cdots, 9$).
The two red curves are $A_1$ (each of the 4 red segments are $A_{1,j}, j=1,\cdots, 4$).
The blue dots are $A_2$ (also $C_0$), the collection of local modes (each green dot is $A_{0,j}$ for some $j=1,\cdots,4$). 
(d): Example for $2$-cells. The thick blue curves are $D_1$ and 
thick red curves are $A_1$. 
}
\label{Fig::ex_D}
\end{figure}

Given any point $x\in\K$, we define the gradient ascent flow starting at $x$, $\pi_x: \mathbb{R}^+ \mapsto \K$,
by
\begin{equation}
\begin{aligned}
\pi_x(0)& = x\\
\pi'_x(t) & = g(\pi(t)).
\end{aligned}
\label{eq::GS}
\end{equation}
A particle on this flow moves along the gradient from $x$
towards a ``destination'' given by
$$
\dest(x) \equiv \lim_{t\rightarrow \infty} \pi_x(t).
$$
It can be shown that $\dest(x) \in \cC$ for $x\in\K$.

We can thus partition $\K$ based on the value of $\dest(x)$.
These partitions are called \emph{descending manifolds} in Morse theory 
\citep{morse1925relations, milnor1963morse}.
Recall $C_k$ is the $k$-th order critical points, 
we assume $C_k=\{c_{k,1},\cdots,c_{k,m_k}\}$ contains $m_k$ distinct elements.
For each $k$, define
\begin{equation}
\begin{aligned}
D_k &= \left\{x: \dest(x)\in C_{d-k}\right\}\\
D_{k,j} & = \left\{x: \dest(x)= c_{d-k,j}\right\}, \quad j=1,\cdots m_{d-k}.
\end{aligned}
\end{equation}
That is, $D_k$ is the collection of all points whose gradient ascent 
flow converges to a $(d-k)$-th order critical point
and $D_{k,j}$ is the collection of points whose gradient ascent flow
converges to the $j$-th element of $C_{d-k}$.
Thus, $D_k = \bigcup_{j=1}^{m_{d-k}}D_{k,j}$.
From
Theorem 4.2 in \cite{banyaga2004lectures},
each $D_k$ is a disjoint union of $k$-dimensional manifolds
($D_{k,j}$ is a $k$-dimensional manifold).
We call $D_{k,j}$ a
\emph{descending k-manifold} of $f$.
Each descending k-manifold is a $k$-dimensional manifold
such that the gradient flow from every point converges to the same $(d-k)$-th order critical point.
Note that $\{D_0,\cdots,D_k\}$ forms a partition of $\K$.
The top panels of Figure~\ref{Fig::ex_D} give an example of the descending manifolds for a two dimensional case.

The \emph{ascending manifolds} are similar to descending manifolds
but are defined through the gradient descent flow.
More precisely, given any $x\in \K$, a gradient descent flow $\gamma: \mathbb{R}^+ \mapsto \K$
starting from $x$ is given by
\begin{equation}
\begin{aligned}
\gamma_x(0)& = x\\
\gamma'_x(t) & = -g(\pi(t)).
\end{aligned}
\label{eq::GD}
\end{equation}
Unlike the ascending flow defined in \eqref{eq::GS}, $\gamma_x$ 
is a flow that moves along the gradient descent direction.
The descent flow $\gamma_x$ shares similar properties to the ascent flow $\pi_x$;
the limiting point $\lim_{t\rightarrow\infty}\gamma_x(t) \in \cC$ is also in critical set
when $f$ is a Morse function.
Thus, similarly to $D_k$ and $D_{k,j}$, we define
\begin{equation}
\begin{aligned}
A_k &= \left\{x: \lim_{t\rightarrow \infty}\gamma_x(t)\in C_{d-k}\right\}\\
A_{k,j} &= \left\{x: \lim_{t\rightarrow \infty}\gamma_x(t)= c_{d-k,j}\right\}, \quad j=1,\cdots, m_{j-k}.\\
\end{aligned}
\end{equation}
$A_k$ and $A_{k,j}$ %share similar properties as $D_k$ and $D_{k,j}$:
have dimension $d-k$ and each $A_{k,j}$ is a partition for $A_k$
and $\{A_0,\cdots,A_d\}$ consist of a partition for $\K$.
We call each $A_{k,j}$ an \emph{ascending k-manifold} to $f$.

A smooth function $f$ is called a \emph{Morse-Smale function} if it is a Morse function
and any pair of the ascending and descending manifolds of $f$ intersect
each other transversely (which means that pairs of manifolds are not parallel
at their intersections); 
see e.g. \cite{banyaga2004lectures} for more details.
In this paper, we also assume that $f$ is a Morse-Smale function.
Note that by the Kupka-Smale Theorem (see e.g. Theorem 6.6 in \cite{banyaga2004lectures}),
Morse-Smale functions are generic (dense) in the collection of smooth functions.
For more details, we refer to Section 6.1 in \cite{banyaga2004lectures}.

A \emph{k-cell} (also called Morse-Smale cell or crystal)
is the non-empty intersection between any descending $k_1$-manifold 
and an ascending $(d-k_2)$-manifold
such that $k = \min\{k_1,k_2\}$ (the ascending $(d-k_2)$-manifold has dimension $k_2$).
When we simply say a cell, we are referring to the $d$-cell
since $d$-cells consists of the majority of $\K$ 
(the totality of $k$-cells with $k<d$ has Lebesgue measure 0).
The \emph{Morse-Smale complex} for $f$ is the collection of
all $k$-cells for $k=0,\cdots, d$.
The bottom panels of Figure~\ref{Fig::ex_D} give examples for the ascending manifolds
and the $d$-cells for $d=2$.
Another example is given in Figure~\ref{Fig::ex_MS}.

The cells of a smooth function can be used to construct an additive decomposition
that is useful in data analysis.
For a Morse-Smale function $f$, let $E_1,\cdots, E_L$ be its associated cells. 
Then we can decompose $f$ into
\begin{equation}
f(x) = \sum_{\ell=1}^L f_\ell (x) 1(x\in E_\ell),
\label{eq::additive}
\end{equation}
where each $f_\ell(x)$ behaves like a multivariate isotonic 
function \citep{barlow1972statistical,bacchetti1989additive}.
Namely, $f(x) = f_\ell(x)$ when $x\in E_\ell$.
This decomposition is because within each $E_\ell$, $f$ has exact a local mode and a local minimum
on the boundary of $E_\ell$. 
The fact that $f$ admits such a decomposition will be used frequently 
in Section~\ref{sec::MSR} and \ref{sec::MSS}.

Among all descending/ascending manifolds,
the descending $d$-manifolds and the ascending $0$-manifolds are
often of great interest.
%the highest order (d-manifolds) manifolds are often of great interest.
For instance, mode clustering \citep{Li2007, azzalini2007clustering}
uses the descending $d$-manifolds to partition the 
domain $\K$ into clusters.
Morse-Smale regression \citep{gerber2011data, gerber2013morse}
fits a linear regression individually over 
each $d$-cell (non-empty intersection of pairs of descending $d$-manifolds and ascending $0$-manifolds).
Regions outside descending $d$-manifolds or ascending $0$-manifolds
have Lebesgue measure $0$.
Thus, later in our theoretical analysis, we will focus on the stability of the set $D_d$ and $A_0$
(see Section~\ref{sec::thm::stability}). 
We define boundaries of $D_d$ as
\begin{equation}
B\equiv\partial D_d = D_{d-1}\cup\cdots \cup D_{0}.
\end{equation}
The set $B$ will be used frequently in Section \ref{sec::thm}.
%and equivalently, we define
%\begin{equation}
%A\equiv\partial A_d = A_{d-1}\cup\cdots \cup A_{0}
%\end{equation}
%to be the boundaries for $A_d$.

\section{Applications in Statistics}

\subsection{Mode Clustering}	\label{sec::mode}

\begin{figure}
\center
\subfigure[Basins of attraction]{
	\includegraphics[width=1.45in]{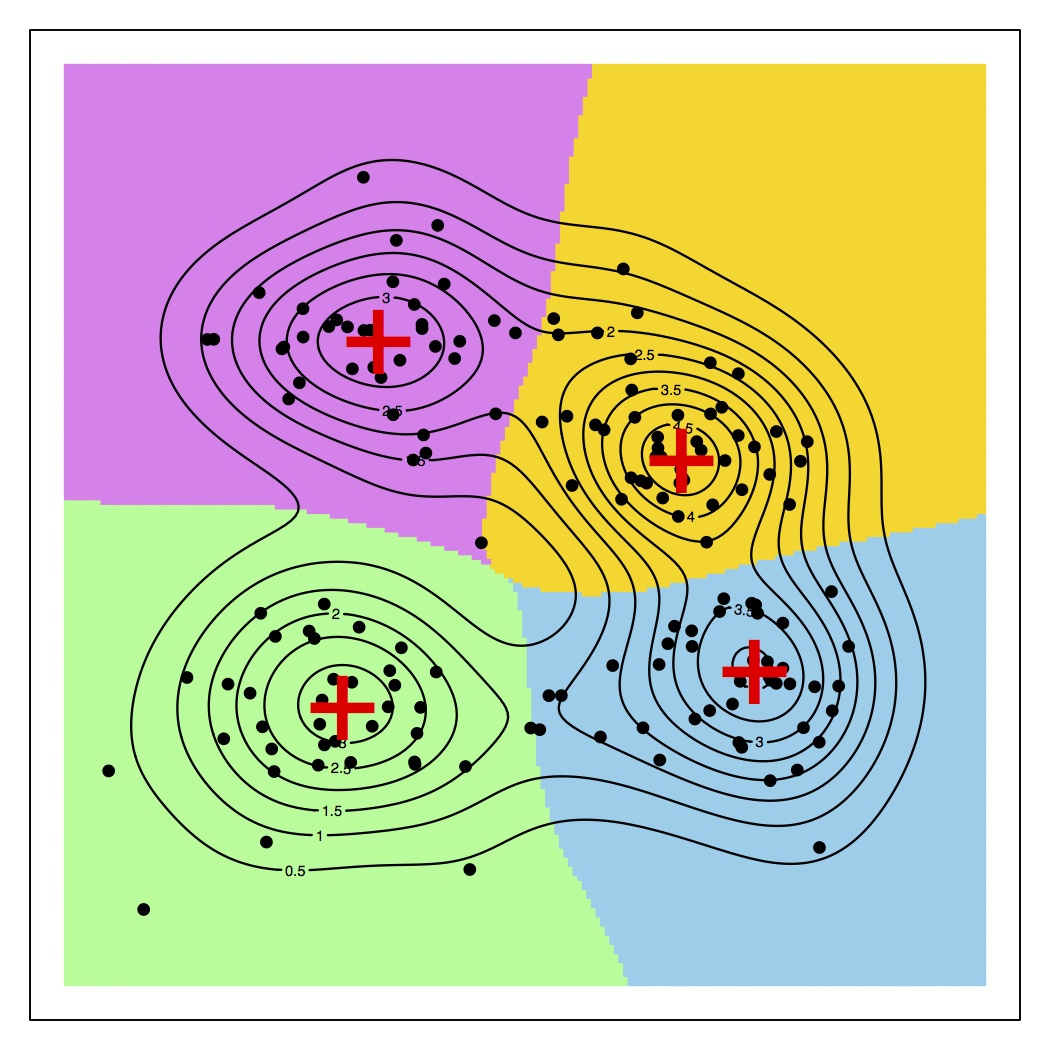}
}
\subfigure[Gradient ascent]{
	\includegraphics[width=1.45in]{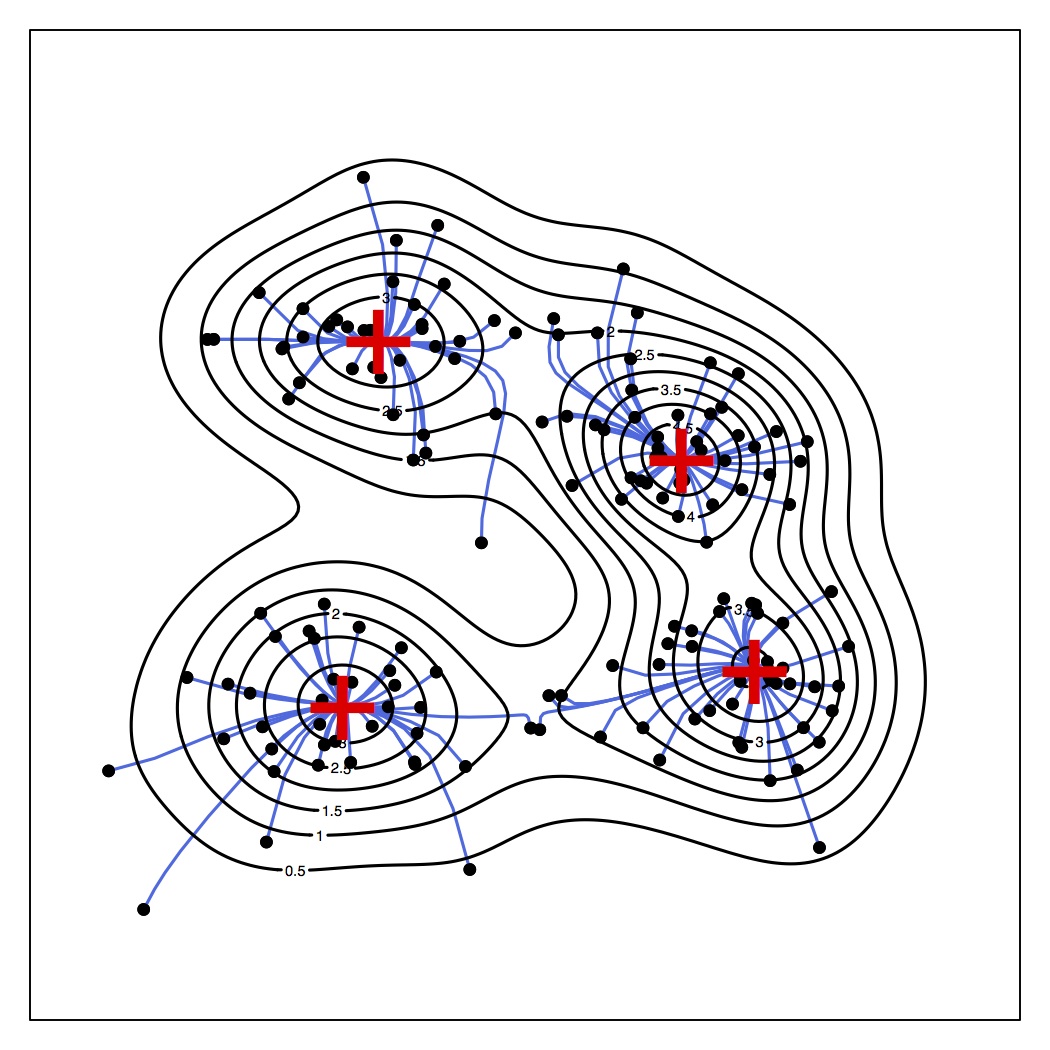}
}
\subfigure[Mode clustering]{
	\includegraphics[width=1.45in]{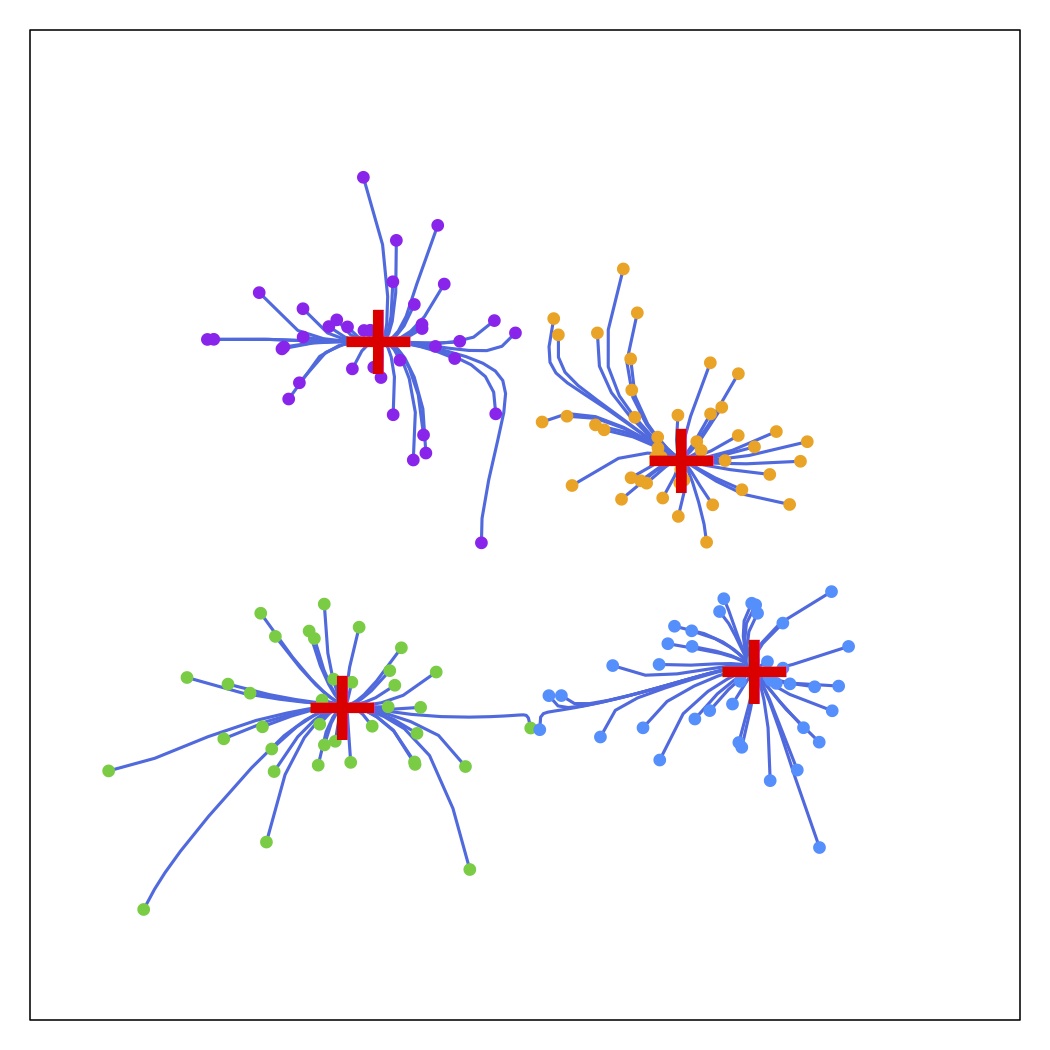}
}
\caption{An example of mode clustering.
(a): Basin of attraction for each local mode (red $+$).
Black dots are data points.
(b): Gradient flow (blue lines) for each data point.
The gradient flow starts at one data point and ends at one local modes.
(c): Mode clustering; we use the destination for
gradient flow to cluster data points.
}
\label{Fig::ex_Modeclustering}
\end{figure}

Mode clustering
\citep{Li2007,azzalini2007clustering,
Chacon2012,arias2016estimation,chacon2015population, chen2016comprehensive}
is a clustering technique based on the Morse-Smale complex
and is
also known as mean-shift clustering
\citep{fukunaga1975estimation,cheng1995mean,comaniciu2002mean}.
Mode clustering
uses the descending $d$-manifolds of
the density function $p$
to partition the whole space $\K$.
(Although the $d$-manifolds do not contain all
points in $\K$, the regions outside $d$-manifolds have Lebesgue measure $0$).
See Figure~\ref{Fig::ex_Modeclustering} for an example.

Now, we briefly describe the procedure of mode clustering.
Let $\cX = \{X_1,\cdots,X_n\}$ be a random sample from density $p$
defined on a compact set $\K$ and
assumed to be a Morse function.
Recall that $\dest(x)$ is the destination of the gradient ascent flow starting from $x$.
Mode clustering partitions the sample based on $\dest(x)$ for each point;
specifically, it partitions $\cX = \cX_1\bigcup\cdots\bigcup\cX_K$ such that
$$
\cX_\ell = \{X_i\in\cX: \dest(X_i)= m_\ell\},
$$
where each $m_\ell$ is a local mode of $p$.
We can also view mode clustering as a clustering technique based on the $d$-descending manifolds.
Let $D_d = D_{d,1}\bigcup\cdots\bigcup D_{d,L}$ be the $d$-descending manifolds of $p$,
assuming that $L$ is the number of local modes.
Then each cluster $\cX_\ell = \cX\bigcap D_{d,\ell}$.

In practice, however, we do not know $p$ so we have to use a density estimator $\hat{p}_n$.
A common density estimator is 
the kernel density estimator (KDE):
\begin{equation}
\hat{p}_n(x) = \frac{1}{nh^d}\sum_{i=1}^n K\left(\frac{x-X_i}{h}\right),
\end{equation}
where $K$ is a smooth kernel function and $h>0$ is the smoothing parameter.
Note that mode clustering is not limited to the KDE; other density estimators also give 
us a sample-based mode clustering.
Based on the KDE, we are able to estimate gradient $\hat{g}_n(x)$, the gradient flows $\hat{\pi}_x(t)$,
and the destination $\hat{\dest}_n(x)$ (note that the mean shift algorithm is an algorithm to perform these tasks). 
Thus, we can estimate the $d$-descending manifolds by the plug-in from $\hat{p}_n$.
Let $\hat{D}_d = \hat{D}_{d,1}\bigcup\cdots\bigcup \hat{D}_{d,\hat{L}}$ be the $d$-descending manifolds of $\hat{p}_n$,
where $\hat{L}$ is the number of local modes of $\hat{p}_n$.
The estimated clusters will be $\hat{\cX}_1,\cdots,\hat{\cX}_{\hat{L}}$, where each
$\hat{\cX}_\ell = \cX \bigcap \hat{D}_{d,\ell}$.
Figure~\ref{Fig::ex_Modeclustering} displays an example of mode clustering using the KDE.

%An unsolved problem for the mode clustering is its statistical consistency.
%We will derive rate of convergence for mode clustering 

A nice property of mode clustering is that 
there is a clear population quantity that our 
estimator (clusters based on the given sample)
is estimating: the population partition of the data points.
Thus we can consider properties of the procedure such as consistency,
which we discuss in detail in Section \ref{sec::thm::MC}.

\subsection{Morse-Smale Regression}	\label{sec::MSR}

Let $(X,Y)$ be a random pair where
$Y\in \mathbb{R}$ and
$X_i \in\mathbb{K}\subset \mathbb{R}^d$.
Estimating the regression function
$m(x) = \mathbb{E}[Y|X=x]$ is challenging for $d$ of even moderate size.
A common way to address this problem is to use
a simple regression function that can be estimated with low variance.
For example, one might use an additive
regression
of the form
$m(x) = \sum_j m_j(x_j)$ which is a sum of one-dimensional smooth functions.
Although the true regression function is unlikely to be of this form,
it is often the case that the resulting estimator is useful.

A different approach, \emph{Morse-Smale regression} (MSR),
is suggested in \cite{gerber2013morse}.
This takes advantage of the (relatively) simple structure
of the Morse-Smale complex and the isotone behavior of the function on
each cell.
Specifically, MSR constructs a piecewise linear approximation to $m(x)$ over the cells of the 
Morse-Smale complex.

We first define the population version of the MSR.
Let $m(x) = \E(Y|X=x)$ be the regression function and is assumed 
to be a Morse-Smale function.
Let $E_1,\cdots E_L$ be the $d$-cells for $m$.
The Morse-Smale Regression for $m$ is a piecewise linear function within
each cell $E_\ell$ such that
\begin{equation}
m_{\MSR}(x) = \mu_\ell+\beta_\ell^Tx, \mbox{ for }x \in E_\ell,
\end{equation}
where $(\mu_\ell,\beta_\ell)$ are obtained by minimizing mean square error:
\begin{equation}
\begin{aligned}
(\mu_\ell, \beta_\ell)&= \underset{\mu,\beta}{\sf \argmin}\,\, \mbox{ }\mathbb{E}\left((Y-m_{\MSR}(X))^2|X\in E_\ell\right)\\
&= \underset{\mu,\beta}{\sf \argmin}\,\, \mbox{ }\mathbb{E}\left((Y- \mu-\beta^TX)^2|X\in E_\ell\right)
\end{aligned}
\label{eq::LSE1}
\end{equation}
That is,
$m_{\MSR}$ is the best linear piecewise predictor using the $d$-cells.
One can also view MSR as using a linear function to approximate 
$f_\ell$ in the additive model \eqref{eq::additive}.
Note that $m_{\MSR}$ is well defined except 
on the boundaries of $E_\ell$ that have Lebesgue measure $0$.

%Essentially, Morse-Smale Regression \citep{gerber2013morse}
%is very similar to the Morse-Smale approximation function.
%The only difference is that instead of minimizing the $\cL_2$ loss,
%we minimize the $\cL_2(\P_X)$ loss where $\P_X$
%is the distribution to the covariates.
%Namely, we are looking for the 
%best piecewise linear \emph{predictor}.

Now we define the sample version of the MSR.
Let $(X_1,Y_1),\cdots,(X_n,Y_n)$ be the random sample from the probability measure 
$\P_X\times \P_Y$
such that $X_i\in\K\subset\R^d$ and $Y_i\in \R$.
Throughout section \ref{sec::MSR},
we assume
the density of covariates $X$ is bounded, positive and 
has a compact support $\K$ and
the response $Y$ has finite second moment.

Let $\hat{m}_n$ be a smooth nonparametric regression estimator for $m$.
We call $\hat{m}_n$ the pilot estimator.
%\ATTNC{Does it deserve some comment that this will need to be computed
%in moderate to large dimensions?}
For instance, one may use
the kernel regression \cite{nadaraya1964estimating}
$\hat{m}_n(x) = \frac{\sum_{i=1}^n Y_i K\left(\frac{x-X_i}{h}\right)}{\sum_{i=1}^n K\left(\frac{x-X_i}{h}\right)}$ 
as the pilot estimator.
%Assume that we are using the kernel regression $\hat{m}_n$
%\eqref{eq::Kreg}
%for estimating $m$ with a smooth kernel function (e.g. Gaussian kernel). 
We define $d$-cells for $\hat{m}_n$
as $\hat{E}_1,\cdots,\hat{E}_{\hat{L}}$.
Using the data $(X_i,Y_i)$ within each estimated $d$-cell, $\hat{E}_\ell$,
the MSR for $\hat{m}_n$ is given by
\begin{equation}
\hat{m}_{n, \MSR}(x) = \hat{\mu}_\ell+\hat{\beta}_\ell^Tx, \mbox{ for }x \in \hat{E}_\ell,
\end{equation}
where $(\hat{\mu}_\ell,\hat{\beta}_\ell)$ are obtained by minimizing the empirical
squared error:
\begin{equation}
\begin{aligned}
(\hat{\mu}_\ell, \hat{\beta}_\ell)= \underset{\mu,\beta}{\sf \argmin} \mbox{ }\sum_{i: X_i\in \hat{E}_\ell}(Y_i- \mu-\beta^TX_i)^2
\end{aligned}
\label{eq::LSE2}
\end{equation}
This MSR is slightly different from the original version in
\cite{gerber2013morse}.  We will discuss the difference in Remark
\ref{rm::MSR}.  
Computing the parameters of MSR is not very
difficult--we only need to compute the cell labels of each observation
(this can be done by the mean shift algorithm or some fast variants
such as the quick-shift algorithm \citealt{vedaldi2008quick}) and then
fit a linear regression within each cell.

MSR may give low prediction error in some cases; see
\cite{gerber2013morse} for some concrete examples.
In Theorem~\ref{thm::MSR2}, we prove that 
we may estimate $m_{\MSR}$ at a fast rate. 
Moreover, the regression function may be visualized by the methods discussed later.

\begin{remark}
\label{rm::MSR}
The original version of Morse-Smale regression proposed in \cite{gerber2013morse}
does not use $d$-cells of a pilot nonparametric estimate $\hat{m}_n$.
Instead, they directly find local modes and minima using the original data points $(X_i,Y_i)$.
This saves computational effort but comes with a price: 
there is no clear population quantity being
estimated by their approach.
That is, when the sample size increases to infinity, there is no guarantee that
their method will converge.
In our case, we apply a consistent pilot estimate for $m$
and construct $d$-cells on this pilot estimate.
As is shown in Theorem~\ref{thm::MSR}, 
our method is consistent for this population quantity.
\end{remark}

\subsection{Morse-Smale Signatures and Visualization}	
\label{sec::MSS}

In this section we define a new method
for visualizing multivariate functions
based on the Morse-Smale complex,
called 
\emph{Morse-Smale signatures}.
The idea is very similar to the Morse-Smale regression but the signatures
can be applied to any Morse-Smale function.

Let $E_1,\cdots, E_K$ be the $d$-cells (nonempty intersection of 
a descending $d$-manifold and
an ascending $0$-manifold) for a
Morse-Smale function $f$ that has a compact support $\K$.  The
function $f$ depends on the context of the problem.  For density
estimation, $f$ is the density $p$ or its estimator $\hat{p}_n$.  For
regression problem, $f$ is the regression function $m$ or a
nonparametric estimator $\hat{m}_n$ .  For two sample test, $f$ is the
density difference $p_1-p_2$ or the estimated density difference
$\hat{p}_1-\hat{p}_2$.  Note that $E_1,\cdots, E_K$ form a partition
for $\K$ except a Lebesgue measure $0$ set.  
Each cell corresponds to a unique pair of a local
mode and a local minimum.  Thus, the local modes and minima along with
$d$-cells form a \emph{bipartite} graph which we call it \emph{signature
graph}.  The signature graph contains geometric information about
$f$.  See Figure~\ref{fig::ex::MSS} and \ref{Fig::ex::MSvis} for
examples.

The signature is defined as follows.
We project the maxima and minima of the function 
into $\mathbb{R}^2$ using multidimensional scaling.
We connect a maximum and minimum by an edge
if there exists a cell that connects them.
The width of the edge is proportional to
the norm of the linear coefficients of the linear approximation to the function
within the cell.
The linear approximation is
\begin{equation}
f_{\MS}(x) = \eta^\dagger_\ell+\gamma^{\dagger T}_\ell x, \quad \mbox{for }x \in E_\ell,
\end{equation}
where $\eta_\ell^\dagger\in\R$ and $\gamma_\ell^\dagger \in \R^d$
are parameters from
\begin{equation}
(\eta_\ell^\dagger, \gamma_\ell^\dagger) = \underset{\eta, \gamma}{\sf argmin} \int_{E_\ell} 
\left(f(x)-\eta-\gamma^Tx\right)^2 dx.
\label{eq::MSS1}
\end{equation}
This is again a linear approximation for $f_\ell$ in the additive model \eqref{eq::additive}.
Note that $f_{\MS}$ may not be continuos when we move from one cell to another. 
The summary statistics for the edge associated with cell $E_\ell$ 
are the parameters $(\eta_\ell^\dagger, \gamma_\ell^\dagger)$.
We call the function $f_{\MS}$ the \emph{(Morse-Smale) approximation function};
it is the best piecewise-linear representation for $f$ (piecewise linear within each cell)
under $\cL_2$ error given the $d$-cells.
This function is well-defined except on a set of Lebesgue measure $0$ (the boundaries of each cell).
See Figure~\ref{fig::ex::MSS} for a example on the approximation function.
The details are in Algorithm \ref{Alg::MSS}.

\begin{figure}
\center
\subfigure[Original function]{
	\includegraphics[width=1.7 in]{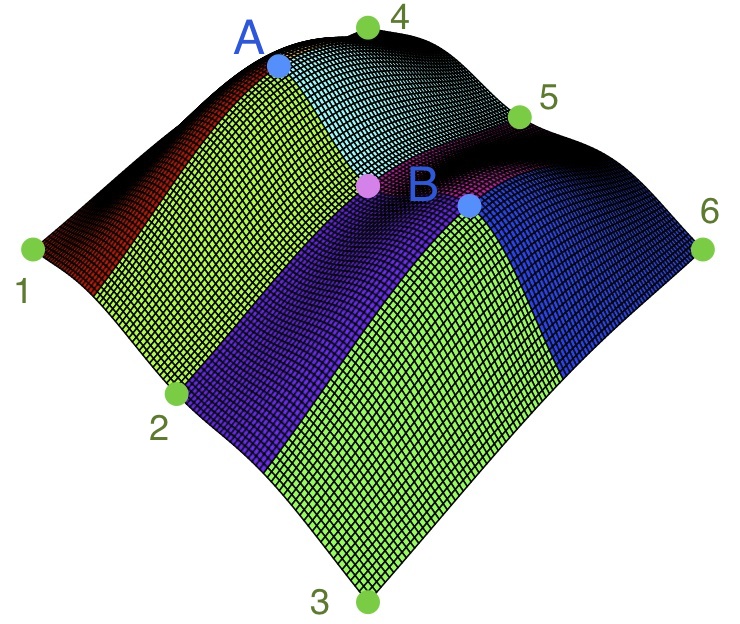}
}
\subfigure[Approximation function]{
	\includegraphics[width=1.7 in]{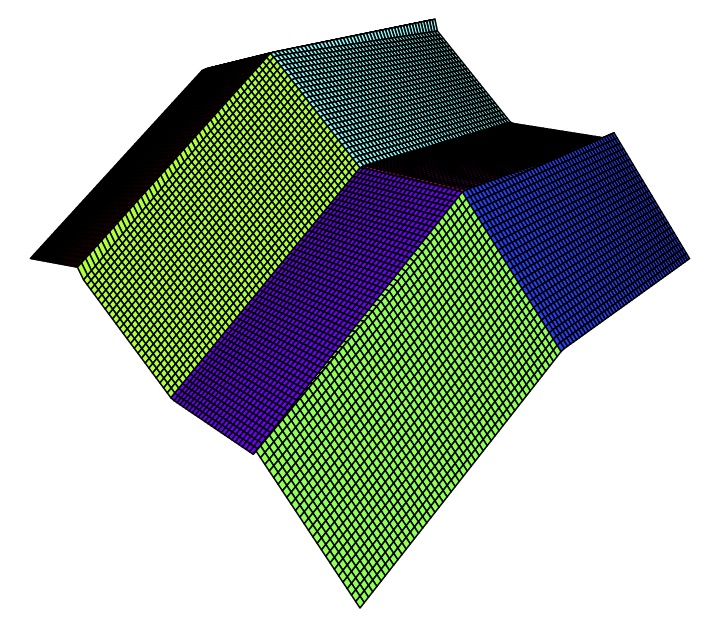}
}
\subfigure[Signature graph]{
	\includegraphics[width=1.7 in]{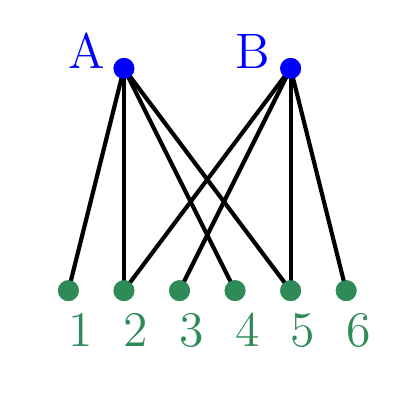}
}
\caption{Morse-Smale signatures for a smooth function.
(a): The original function. 
The blue dots are local modes, the green dots are local minima
and the pink dot is a saddle point. (b): The Morse-Smale approximation to (a).
This is the best piecewise linear approximation to the original function given $d$-cells.
(c): This bipartite graph 
has
nodes that are local modes and minima and edges
that represent the $d$-cells.
Note that we can summarize the smooth function (a) by the signature graph (c)
and the parameters for constructing approximation function (b).
The signature graph and parameters for approximation function define 
the Morse-Smale signatures.
%\textcolor{red}{can we fix this bi-partite graph?}
}
\label{fig::ex::MSS}
\end{figure}

\begin{figure}
\center
\includegraphics[width=2.5in]{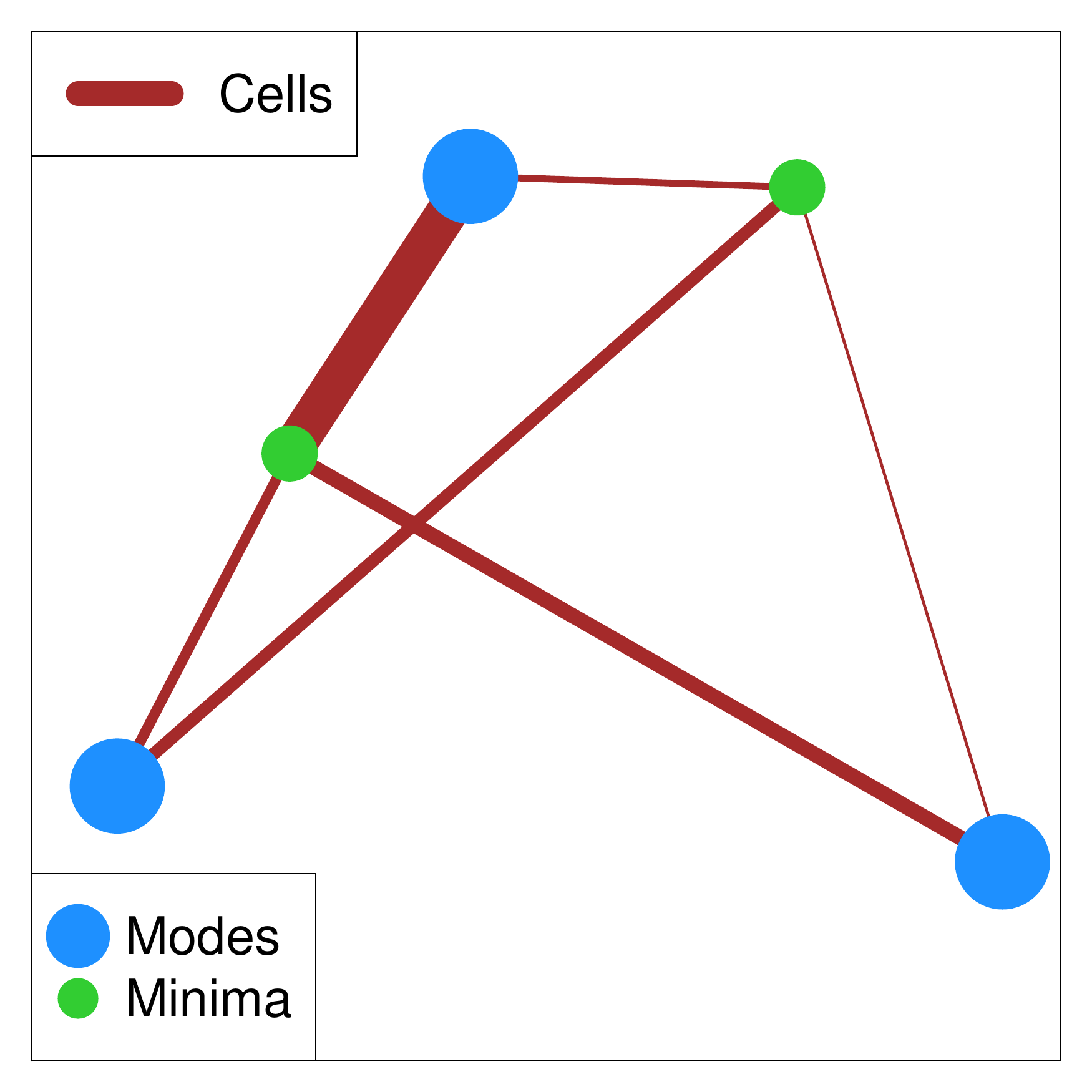}
\caption{Morse-Smale Signature visualization (Algorithm \ref{Alg::MSS})
of the density difference for GvHD dataset (see Figure~\ref{Fig::ex0}).
The blue dots are local modes; the green dots are local minima;
the brown lines are $d$-cells.
These dots and lines form the signature graph.
The width indicates the $\cL_2$ norm
for the slope of regression coefficients. i.e. $\norm{\gamma^\dagger_\ell}$.
The location for modes and minima are obtained by
multidimensional scaling so that 
the relative distance is preserved.
%\ATTNC{It would be nice if this could be clear without the legend, as it
%becomes a simple icon.  Is there an obvious way to denote maxima and minima
%rather than different sized circles? And we probably don't need to indicate
%cells as that is basic to the definition.  Just a thought, not essential.}
}
\label{Fig::ex::MSvis}
\end{figure}

{\bf Example.}
Figure~\ref{Fig::ex::MSvis} is an example
using the GvHD dataset.
We first conduct multidimensional scaling
\citep{kruskal1964multidimensional} on the local modes and minima
for $f$ and plot them on the 2-D plane.
In Figure~\ref{Fig::ex::MSvis}, the blue dots are local modes
and the green dots are local minima.
These dots act as the nodes for the signature graph.
Then we add edges, representing the cells for $f$
that connect pairs of local modes and minima, to form the signature graph.
Lastly, we adjust the width for the edges according to the 
strength ($\cL_2$ norm) of regression function within each cell (i.e. $\norm{\gamma^\dagger_\ell}$).
Algorithm \ref{Alg::MSS} provides a summary for visualizing 
a general multivariate function using what we described in this paragraph.

\begin{algorithm}
\caption{Visualization using Morse-Smale Signatures}
\label{Alg::MSS}
\begin{algorithmic}
\State \textbf{Input:} 
Grid points $x_1,\cdots,x_N$ and the functional evaluations $f(x_1),\cdots,f(x_N)$.
\State 1. Find local modes and minima of $f$ on the discretized points $x_1,\cdots, x_N$.
Let $M_1,\cdots M_K$ and $m_1,\cdots, m_S$ denote the grid points for modes and minima.
\State 2. Partition $\{x_1,\cdots, x_N\}$ into $\mathcal{X}_1,\cdots\mathcal{X}_L$ 
according to the $d$-cells of $f$ 
(1. and 2. can be done by using a k-nearest neighbor gradient
ascent/descent method; see Algorithm 1 in \cite{gerber2013morse}).
\State 3. For each cell $\mathcal{X}_\ell$, fit a linear regression with
$(X_i, Y_i) = (x_i, f(x_i))$, where $x_i \in \mathcal{X}_\ell$.
Let the regression coefficients (without intercept) be $\beta_\ell$.
\State 4. Apply multidimensional scaling to modes and minima jointly.
Denote their 2 dimensional representation points as
$$
\{M^*_1,\cdots M^*_K, m^*_1,\cdots, m^*_S\}.
$$
\State 5. Plot $\{M^*_1,\cdots M^*_K, m^*_1,\cdots, m^*_S\}$.
\State 6. Add edge to a pair of mode and minimum if there exist a cell
that connects them. The width of the edge is in proportional to
$\norm{\beta_\ell}$ (for cell $\mathcal{X}_\ell$).
\end{algorithmic}
\end{algorithm}

\subsection{Two Sample Comparison}	\label{sec::two}

The Morse-Smale complex can be used to compare two samples.
There are two ways to do this.
The first one is to test the difference in two density functions locally
and then use the Morse-Smale signatures to visualize regions where 
the two samples are different.
The second approach is to conduct
a nonparametric two sample test within each
Morse-Smale cell.
The advantage of the first approach is that we obtain a visual display on
where the two densities are different.
The merit of the second method is that we gain additional power in testing
the density difference by using the shape information.

%\textcolor{red}{Under $H_0$ there is no bias in the the difference of the density estimates.
%This suggests we can make $h$ large. Can we make this precise?}
%YC: Actually, large $h$ may result in a low power. Despite the fact that we have fast rate for variance,
%the bias will make two densities look similar (this makes it hard to reject H0).  

\subsubsection{Visualizing the Density Difference}

Let $X_1,\ldots X_n$ and $Y_1,\ldots, Y_m$ be two random sample
with densities $p_X$ and $p_Y$.
In a two sample comparison,
we not only want to know
if $p_X=p_Y$
but we also want to find the regions
that they significantly disagree.
That is, we are doing the local tests
\begin{equation}
H_0(x): p_X(x)= p_Y(x)
\end{equation}
simultaneously for all $x\in\K$
and we are interested in the regions where we reject $H_0(x)$.
A common approach is to estimate the density for both sample
by the KDE and set a threshold to pickup those regions
that the density difference is large. Namely,
we first construct density estimates
\begin{equation}
\hat{p}_X(x) = \frac{1}{nh^d}\sum_{i=1}^nK\left(\frac{x-X_i}{h}\right), \quad
\hat{p}_Y(x) = \frac{1}{mh^d}\sum_{i=1}^mK\left(\frac{x-Y_i}{h}\right)
\end{equation}
and then compute $\hat{f}(x) = \hat{p}_X(x)-\hat{p}_Y(x)$.
The regions 
\begin{equation}
\Gamma(\lambda) = \left\{x\in\K: |\hat{f}(x)|> \lambda\right\}
\end{equation}
are where we have strong evidence 
to reject $H_0(x)$.
%that the two densities disagree with each other.
The threshold $\lambda$ can be picked by
quantile values of the bootstrapped $\cL_{\infty}$ density deviation
to control type 1 error
or can be chosen by controlling the false discovery rate \citep{duong2013local}.

Unfortunately, $\Gamma(\lambda)$ is hard to visualize when $d > 3$.
So we use the Morse-Smale complex for $\hat{f}$
and visualize $\Gamma(\lambda)$ by its behavior on the $d$-cells of the complex.
Algorithm \ref{Alg::vis} gives a method for visualizing density differences
like $\Gamma(\lambda)$ in the context of comparing two independent samples.

\begin{algorithm}
\caption{Visualization For Two Sample Test}
\label{Alg::vis}
\begin{algorithmic}
\State \textbf{Input:} Sample 1: $\{ X_1,...X_n\}$,
Sample 2: $\{Y_1,\cdots, Y_m\}$, threshold $\lambda$ and radius constant $r_0$
\State 1. Compute the density estimates $\hat{p}_X$ and $\hat{p}_Y$.
\State 2. Compute the difference function $\hat{f} = \hat{p}_X-\hat{p}_Y$
and the significant regions
\begin{equation}
\Gamma^+(\lambda) =\left \{x\in\K: \hat{f}(x)>\lambda\right\},\quad
\Gamma^-(\lambda) = \left\{x\in\K: \hat{f}(x)< -\lambda\right\}
\end{equation}
\State 3. Find the $d$-cells for $\hat{f}$, denoted as $E_1,\cdots, E_L$.
%% find center
\State 4. For cell $E_\ell$, do (4-1) and (4-2):
\State 4-1. compute the cell center $e_\ell$, cell size $V_\ell = \Vol(E_\ell)$,
\State 4-2. compute the positive significant ratio and negative significant ratio
\begin{equation}
r^+_\ell = \frac{\Vol(E_\ell \cap \Gamma^+(\lambda))}{\Vol (E_\ell)}, \quad
r^-_\ell = \frac{\Vol(E_\ell \cap \Gamma^-(\lambda))}{\Vol (E_\ell)}.
\end{equation}
\State 5. For every pair of cell $E_j$ and $E_\ell$ $(j\neq \ell)$,
compute the shared boundary size:
\begin{equation}
B_{j\ell} = \Vol_{d-1} (\bar{E}_j\cap \bar{E}_\ell),
\end{equation}
where $\Vol_{d-1}$ is the $d-1$ dimensional Lebesgue measure.

\State 6. Do multidimensional scaling
\citep{kruskal1964multidimensional} to $e_1,\cdots, e_L$ to obtain
low dimensional representation $\tilde{e}_1,\cdots,\tilde{e}_L$. 
%% compute ratio
\State 7. Place a ball center at each $\tilde{e}_\ell$ with radius 
$r_0\times\sqrt{V_\ell}$.
\State 8. If $r^+_\ell + r^-_\ell>0$, add a pie chart center
at $\tilde{e}_\ell$
with radius $r_0\times\sqrt{V_\ell}\times(r^+_\ell + r^-_\ell)$.
The pie chart contains two groups, each with ratio $\left(\frac{r^+_\ell }{r^+_\ell + r^-_\ell}, \frac{ r^-_\ell}{r^+_\ell + r^-_\ell}\right)$.
\State 9. Add a line to connect two nodes $\tilde{e}_j$ and $\tilde{e}_\ell$
if $B_{j\ell}>0$. We may adjust the thickness of the line according to $B_{j\ell}$.

%\State \textbf{Output:} 
\end{algorithmic}
\end{algorithm}

An example for Algorithm \ref{Alg::vis} is in Figure~\ref{Fig::ex0},
in which we apply the visualization algorithm for the
the GvHD dataset by using kernel density estimator. 
We choose the threshold $\lambda$ by bootstrapping 
the $\cL_{\infty}$ difference for $\hat{f}$ i.e.
$\sup_x |\hat{f}^*(x)-\hat{f}(x)|$, where $\hat{f}^*$
is the density difference for the bootstrap sample. 
We pick $\alpha=95\%$ upper quantile value for 
the bootstrap deviation as the threshold.

The radius constant $r_0$ is defined by the user.
It is a constant for visualization and does not affect the analysis.
Algorithm \ref{Alg::vis} preserves the relative position
for each cell and visualizes the cell according to its size.
The pie-chart provides the ratio of regions where the two densities are significantly
different.
The lines connecting two cells provide the geometric information about
how cells are connected to each other.

By applying Algorithm \ref{Alg::vis} to the GvHD dataset (Figure~\ref{Fig::ex0}), we 
find that there are 6 cells and one cell much larger than the others.
Moreover, 
in most regions, the blue regions are larger than the red areas.
This indicates that compared to the density of the control group, the density of the GvHD group seem to 
concentrates more so that the regions above the threshold are larger.

\subsubsection{Morse-Smale Two-Sample Test}

Here we introduce a technique combining the energy test
\citep{baringhaus2004new,szekely2004testing,szekely2013energy}
and the Morse-Smale complex to conduct a two sample test.
We call our method the \emph{Morse-Smale Energy test (MSE test)}.
The advantage of the MSE test is that it is a nonparametric test
and its power can be higher than the energy test; see Figure~\ref{Fig::ex_GMM}.
Moreover, we can combine our test with the visualization tool proposed in the previous section
(Algorithm \ref{Alg::vis});
see Figure~\ref{Fig::p_values} for an example for displaying p-values from MSE test
when visualizing the density difference.

%{\bf SAY WHY THIS IS BETTER THAN JUST USING THE ENERGY TEST.
%WHAT ARE THE ADNANTAGES?}

Before we introduce our method, we first review the ordinary energy test.
Given two random variables $X\in\R^d$ and $Y\in\R^d$, the energy distance
is defined as
\begin{equation}
\cE(X,Y) = 2\E\norm{X-Y} -\E\norm{X-X'}-\E\norm{Y-Y'},
\end{equation}
where $X'$ and $Y'$ are iid copies of $X$ and $Y$.
The energy distance has several useful applications such as
the goodness-of-fit testing \citep{szekely2005new}, 
two sample testing \citep{baringhaus2004new,szekely2004testing,szekely2013energy}, 
clustering \citep{szekely2005hierarchical}, and distance components
\citep{rizzo2010disco}
to name but few. 
We recommend an excellent review paper in \citep{szekely2013energy}.

For the two sample test, let $X_1,\cdots,X_n$ and $Y_1,\cdots,Y_m$
be the two samples we want to test.
The sample version of energy distance is
\begin{equation}
\hat{\cE}(X,Y) = \frac{2}{nm}\sum_{i=1}^n \sum_{j=1}^m \norm{X_i-Y_j}
- \frac{1}{n^2}\sum_{i=1}^n \sum_{j=1}^n \norm{X_i-X_j}
- \frac{1}{m^2}\sum_{i=1}^m \sum_{j=1}^m \norm{Y_i-Y_j}.
\end{equation}
If $X$ and $Y$ are from the sample population (the same density),
$\hat{\cE}(X,Y)\overset{P}{\rightarrow} 0$.
Numerically, we use the permutation test for computing the p-value
for $\hat{\cE}(X,Y)$. 
This can be done quickly in the R-package `energy' \citep{rizzo2008energy}.

Now we formally introduce our testing procedure: the MSE test
(see Algorithm \ref{Alg::two} for a summary).
%We call our test \emph{Morse-Smale Energy Test (MSE test)}.
Our test consists of three steps.
First, we split the data into two halves.
Second,
we use one half of the data (contains both samples)
to do a nonparametric density estimation (e.g. the KDE)
and then compute the Morse-Smale complex ($d$-cells).
Last,
we use the other half of the data
to conduct the energy distance two sample test
`within each $d$-cell'.
That is,
we partition the second half of the data by the $d$-cells.
Within each cell, we do the energy distance test.
If we have $L$ cells, we will have $L$ p-values
from the energy distance test.
We reject $H_0$ if any one of the $L$ p-values is smaller than
$\alpha/L$ (this is from Bonferroni correction).
Figure~\ref{Fig::p_values} provides an example for using 
the above procedure (Algorithm \ref{Alg::two})
along with the visualization method proposed in Algorithm \ref{Alg::vis}.
Data splitting is used to avoid using the same data twice,
which ensures we have a valid test.

\begin{algorithm}
\caption{Morse-Smale Energy Test (MSE test)}
\label{Alg::two}
\begin{algorithmic}
\State \textbf{Input:} Sample 1: $\{ X_1,...X_n\}$,
Sample 2: $\{Y_1,\cdots, Y_m\}$, smoothing parameter $h$, significance level $\alpha$
\State 1. Randomly split the data into halves $\cD_1$ and $\cD_2$; both contain
equal number of $X$ and $Y$ (assuming $n$ and $m$ are even).
\State 2. Compute the KDE $\hat{p}_X$ and $\hat{p}_Y$ by the first sample $\cD_1$.
\State 3. Find the $d$-cells for $\hat{f}=\hat{p}_X-\hat{p}_Y$, denoted as $E_1,\cdots, E_L$.
%% find center
\State 4. For cell $E_\ell$, do 4-1 and 4-2:
\State 4-1. Find $X$ and $Y$ in the second sample $\cD_2$,
\State 4-2. Do the energy test for two sample comparison. Let the p-value be $p(\ell)$
\State 5. Reject $H_0$ if $p(\ell)<\alpha/L$ for some $\ell$.
%\State \textbf{Output:} 
\end{algorithmic}
\end{algorithm}

%\ATTNC{It's fine to have the individual examples distributed throughout, but I wonder if
%a separate examples section might have more impact. Just a thought.}

{\bf Example.}
Figure~\ref{Fig::ex_GMM} shows a simple comparison
for the proposed MSE test to the usual Energy test.
We consider a $K=4$ Gaussian mixture model in $d=2$
with standard deviation of each component being the same $\sigma=0.2$
and the proportion for each component is $(0.2, 0.5, 0.2, 0.1)$.
The left panel displays a sample with $N=500$ from this mixture distribution.
We draw the first sample from this Gaussian mixture model.
For the second sample, we draw
a similar Gaussian mixture model except
that we change the deviation of one component.
In the middle panel, we change the deviation 
to the third component (C3 in left panel, which contains $20\%$ data points).
In the right panel, we change the deviation to the fourth component 
(C4 in left panel, which contains $10\%$ data points).
We use significance level $\alpha=0.05$
and for MSE test, we consider the Bonferroni correction and the smoothing bandwidth is chosen using 
Silverman's rule of thumb \citep{Silverman1986}.

%{\bf YEN-CHI: the reference to Silverman did not compile properly.}

Note that in both the middle and the right panels,
the left most case (added deviation equals $0$)
is where $H_0$ should not be rejected.
As can be seen from Figure~\ref{Fig::ex_GMM},
the MSE test has much stronger power compared to the usual Energy test.
%despite the fact that we slightly lost control of type-1 error 
%(we only control type-1 errors asymptotically).

%{\bf OUCH: We need to control the type I error}

The original energy test has low power
while the MSE test has higher power.
This is because the two distributions only differ at
a small portion of the regions so that a global test like energy test
requires large sample sizes to detect the difference.
On the other hand, the MSE test partitions the space 
according to the density difference
so that it is capable of detecting the local difference.

\begin{figure}
\center
	\includegraphics[width=1.5 in]{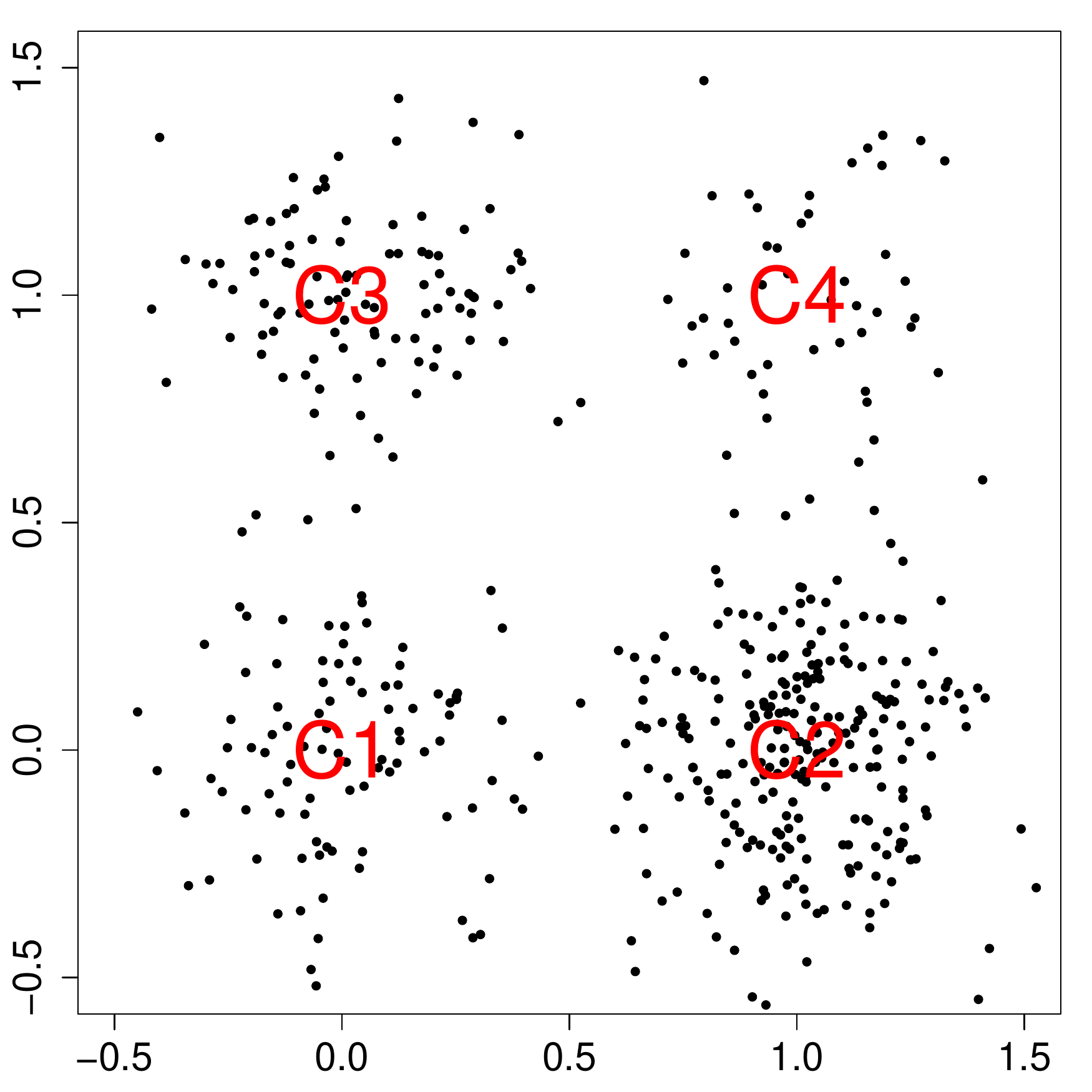}
	\includegraphics[width=1.5 in]{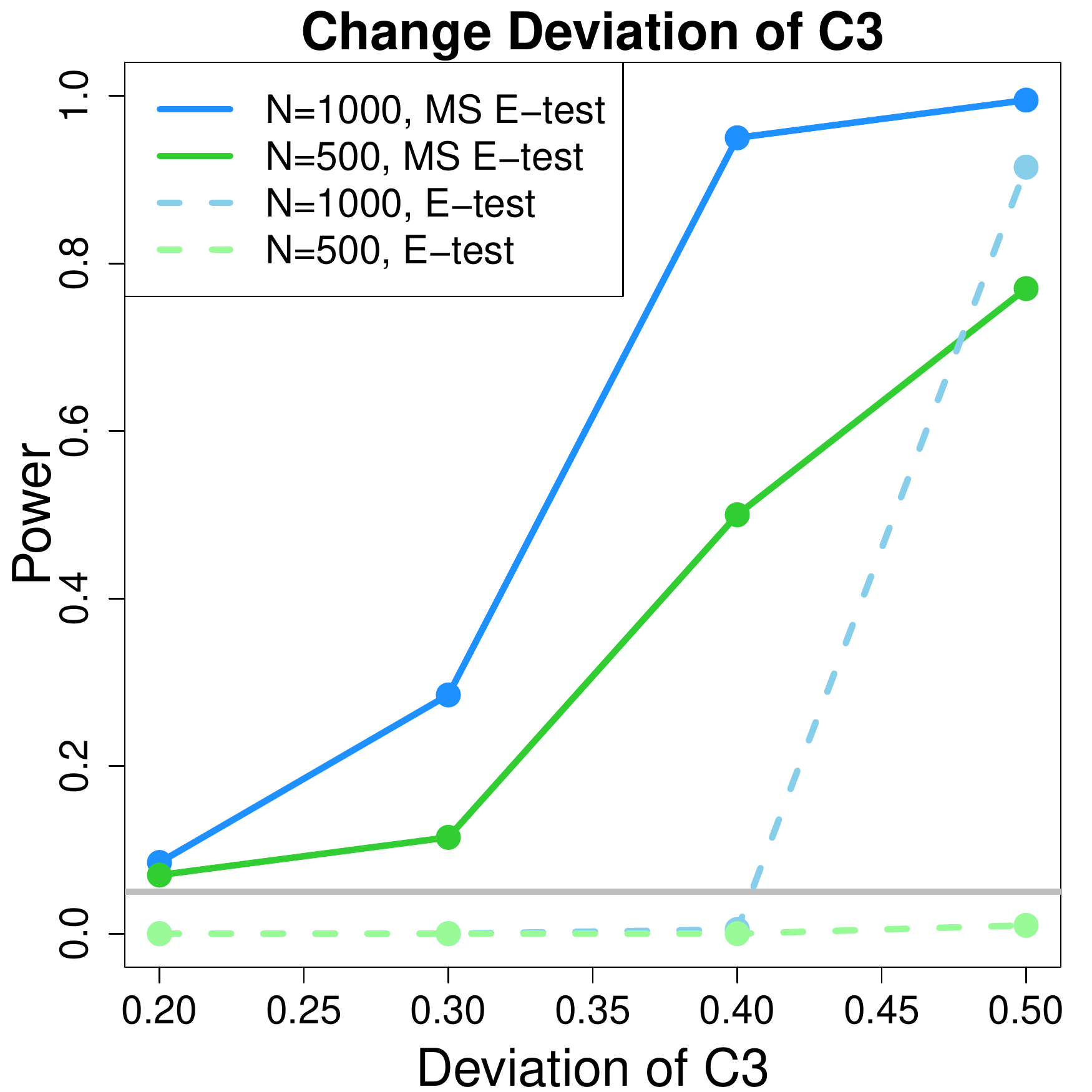}
	\includegraphics[width=1.5 in]{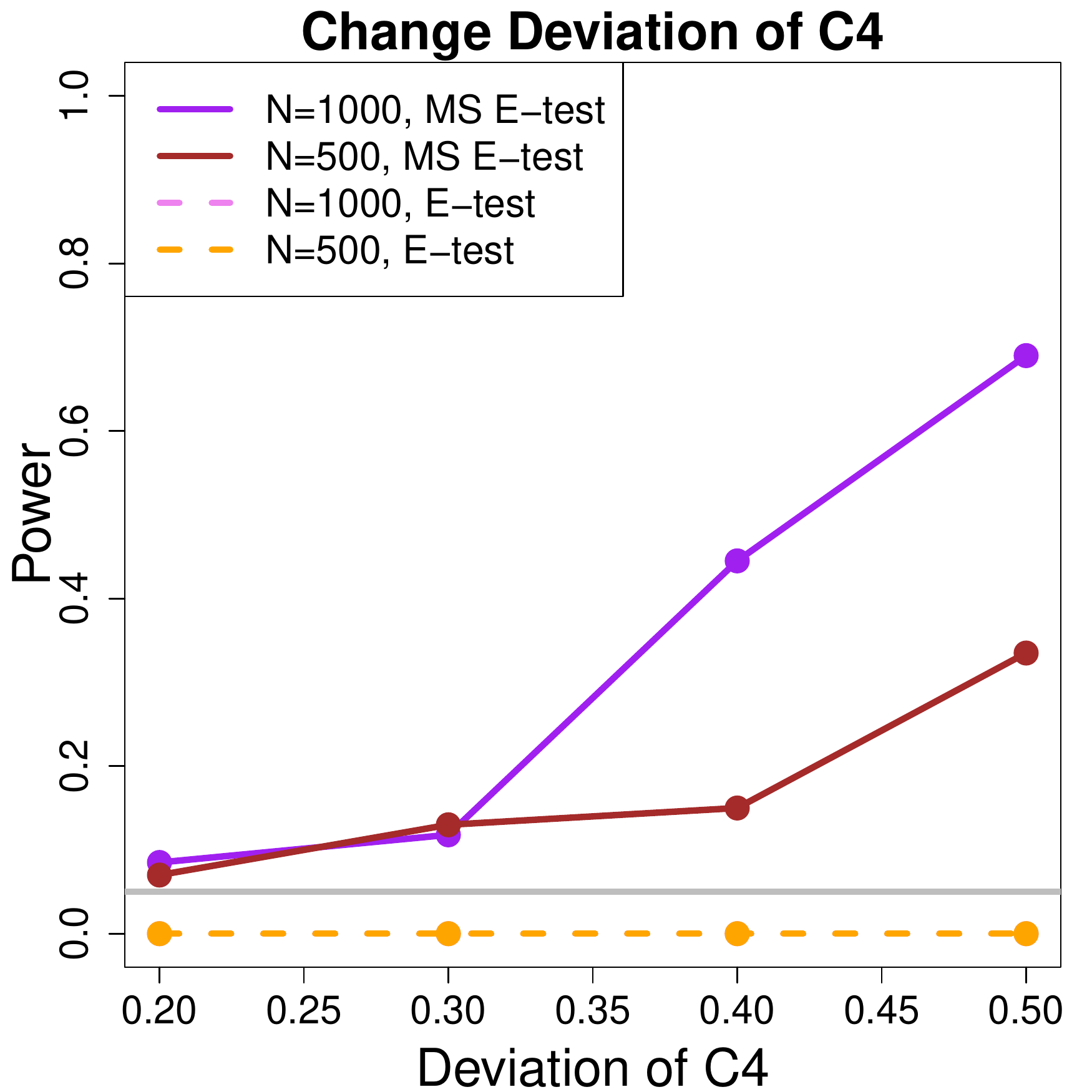}
\caption{An example comparing the Morse-Smale Energy test
to the original Energy test. We consider a $d=2$, $K=4$ Gaussian mixture model.
Left panel: an instance for the Gaussian mixture. We have four mixture components,
denoting as C1, C2, C3 and C4.
They have equal standard deviation ($\sigma=0.2$) and 
the proportions for each components are $(0.2, 0.5, 0.2, 0.1)$.
Middle panel: We changed the standard deviations of component C3 to $0.3, 0.4$ and $0.5$
and compute the power for the MSE test and the usual Energy test
at sample size $N=500$ and $1000$.
(Standard deviation equals $0.2$ is where $H_0$ should not be rejected.)
Right panel: We add the variance of component C4 (the smallest component) 
and do the same comparison as in the middle panel.
We pick the significance level $\alpha=0.05$ (gray horizontal line) and in the MSE test,
we reject $H_0$ if the minimal p-value is less than $\alpha/L$, where $L$
is the number of cells (i.e. we are using the Bonferroni correction).
}
\label{Fig::ex_GMM}
\end{figure}

{\bf Example.}
In addition to the higher power, we may combine the MSE test with the visualization
tool in Algorithm~\ref{Alg::vis}.
Figure~\ref{Fig::p_values} displays an example where we visualize the
density difference and simultaneously indicate the p-values from the Energy test within each cell
using the GvHD dataset.
This provides us more information about how two distributions differ from each other.

\begin{figure}
\center
\includegraphics[scale=0.3]{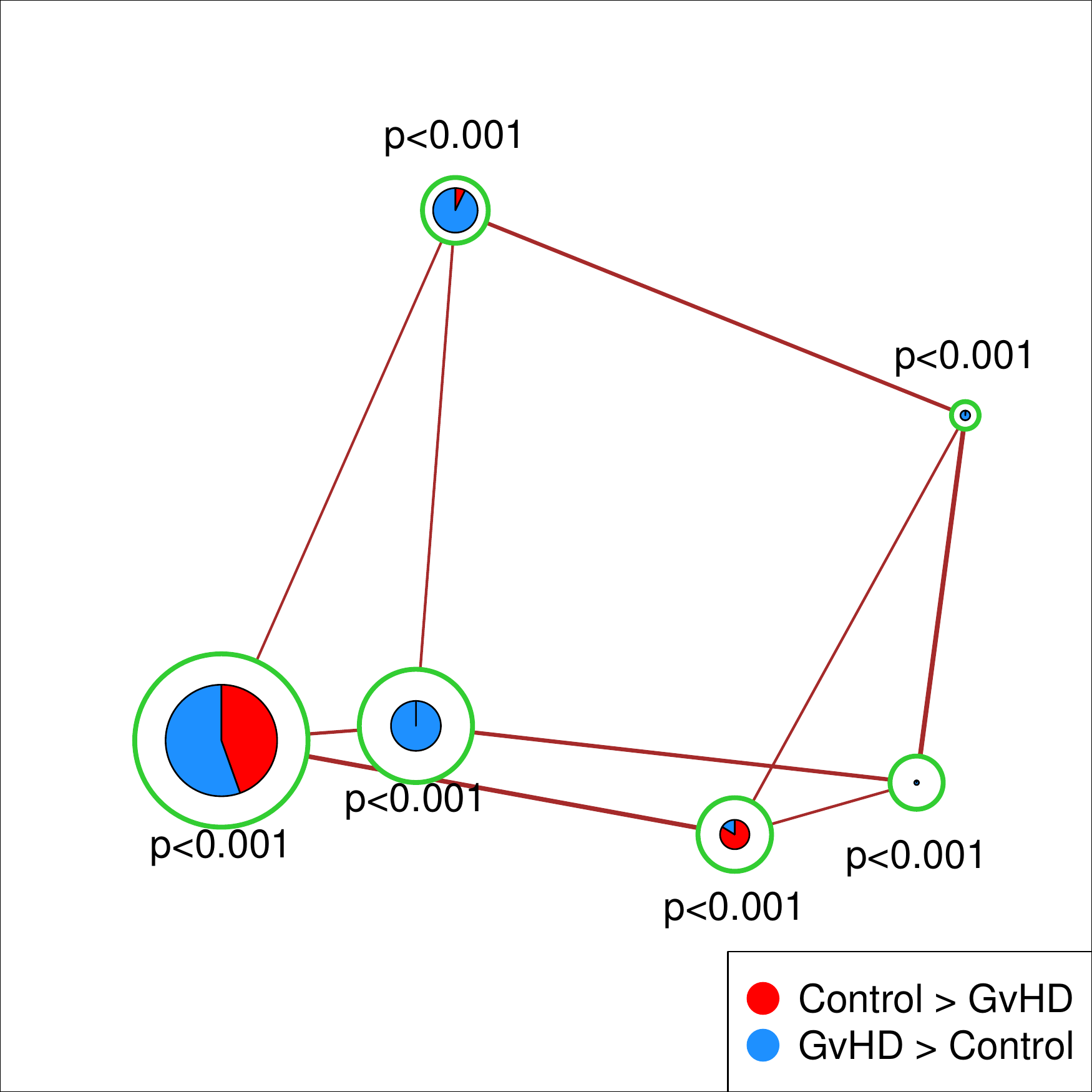}
\caption{
An example using both Algorithm \ref{Alg::vis} and \ref{Alg::two}
to the GvHD dataset introduced in Figure~\ref{Fig::ex0}.
We use data splitting as described in Algorithm \ref{Alg::two}.
For the first part of the data, we compute the cells and
visualize the cells using
Algorithm \ref{Alg::vis}.
Then we apply the energy distance two sample test for each cell as described 
in Algorithm \ref{Alg::two} and we annotate each cell with a p-value.
Note that the visualization is slightly different to Figure~\ref{Fig::ex0}
since we use only half of the original dataset in this case.
}
\label{Fig::p_values}
\end{figure}

\section{Theoretical Analysis}	\label{sec::thm}

We first define some notation for the theoretical analysis.
Let $f$ be a smooth function. We define $\|f\|_{\infty} = \sup_x |f(x)|$ to be the $\cL_\infty$-norm of $f$.
In addition, let $\norm{f}_{j,\max}$ denote
the elementwise $\cL_{\infty}$-norm for $j$-th derivatives of $f$.
For instance, 
$$
\norm{f}_{1,\max}  = \max_i \|g_i(x)\|_\infty, \quad\norm{f}_{2,\max} =  \max_{i,j}\|H_{ij}(x)\|_\infty.
$$
We also define $\|f\|_{0,\max} = \|f\|_{\infty}$. 
We further define
\begin{equation}
\norm{f}^*_{\ell, \max} = \max\left\{\norm{f}_{j,\max}: j=0,\cdots, \ell\right\}.
\label{eq::norm}
\end{equation}
The quantity $\norm{f-h}^*_{\ell,\max}$ measures the difference between
two functions $f$ and $h$ up to $\ell$-th order derivative.

For two sets $A,B$,
the Hausdorff distance is 
\begin{equation}
\Haus(A,B) = \inf\{r: A\subset B\oplus r, B\subset A\oplus r\},
\label{eq::Haus}
\end{equation}
where $A\oplus r = \{y: \min_{x\in A}\norm{x-y}\leq r\}$.
The Hausdorff distance is like the $\cL_\infty$ distance for sets.

Let $\tilde{f}:\mathbb{K}\subset\mathbb{R}^d\mapsto \mathbb{R}$ be a smooth function with 
bounded third derivatives.
Note that as long as $\norm{\tilde{f}-f}^*_{3,\max}$ is small, $\tilde{f}$
is also a Morse function by Lemma~\ref{lem::critical}.
Let $\tilde{D}$ denote the boundaries of the descending $d$-manifolds of $\tilde{f}$.
We will show if
$\norm{f-\tilde{f}}^*_{3,\max}$is sufficiently small, 
then $\Haus(\tilde{D},D) = O(\norm{\tilde{f}-f}_{1,\max})$.

\subsection{Stability of the Morse-Smale Complex}	\label{sec::thm::stability}

Before we state our theorem, we first derive some properties of descending manifolds.
Recall that we are interested in $B=\partial D_d$, the boundary of the descending $d$-manifolds 
(and $B$ is also the union 
of all $j$-descending manifolds for $j<d$).
Since each $D_j$ is a collection of smooth $j$-dimensional 
manifolds embedded in $\mathbb{R}^d$,
for every $x\in D_j$, there exists a basis $v_{1}(x),\cdots,v_{d-j}(x)$ 
such that each $v_k(x)$ is
perpendicular to $D_j$ at $x$ for $k=1,\cdots d-j$ 
\citep{bredon1993topology,helgason1979differential}.
%\textcolor{red}{Reference}
That is, $v_{1}(x),\cdots,v_{d-j}(x)$ span the normal space to $D_j$ at $x$.
For simplicity, we write 
\begin{equation}
V(x) = (v_{1}(x),\cdots,v_{d-j}(x))\in\mathbb{R}^{d\times (d-j)}
\end{equation} 
for $x\in D_j$.

Note the number of columns $d-j\equiv d-j(x)$ in $V(x)$ depends 
on which $D_j$ the point $x$ belongs to.
We use $j$ rather than $j(x)$ to simplify the notation.
For instance, if $x\in D_1$, $V(x) \in\mathbb{R}^{d\times (d-1)} $ 
and if $x\in D_{d-1}$, $V(x)\in \mathbb{R}^{d\times 1}$.
We also let
\begin{equation}
\mathbb{V}(x) = {\rm span}\{v_{1}(x),\cdots,v_{d-j}(x)\}
\end{equation} 
denote the normal space to $B$ at $x$.
One can view $\mathbb{V}(x)$ as the normal map of the manifold $D_j$ at $x\in D_j$.

For each $x\in B$, define the projected Hessian 
\begin{equation}
H_V(x) = V(x)^TH(x)V(x),
\end{equation}
which is the Hessian matrix of $p$ by taking gradients along column space of $V(x)$.
If $x\in D_j$, $H_V(x)$ is a $(d-j)\times (d-j)$ matrix. 
The eigenvalues of $H_V(x)$ determine how the gradient flows are moving away from $B$.
We let $\lambda_{\min}(M)$ be the smallest eigenvalue for a symmetric matrix $M$. 
If $M$ is a scalar (just one point), then $\lambda_{\min}(M)=M$.

\vspace{1cm}

\noindent
{\bf Assumption (D):} We assume that
$H_{\min} = \min_{x\in B} \lambda_{\min} (H_V(x))>0.$

\vspace{1cm}

This assumption is very mild; it requires that the gradient flow moves away from 
the boundary of ascending manifolds. In terms of mode clustering,
this requires the gradient flow to move away from the boundaries of clusters.
For a point $x\in D_{d-1}$, let $v_1(x)$ be the corresponding
normal direction.
Then the gradient $g(x)$ is normal to $v_1(x)$ by definition.
That is, $v_1(x)^T g(x) =v_1(x)^T\nabla p(x)=0$,
which means that the gradient along $v_1(x)$ is $0$.
Assumption (D) means that the 
the second derivative along $v_1(x)$ is positive, which implies
that the density along direction $v_1(x)$ behaves like a local minimum
at point $x$.
Intuitively, this is 
how we expect the density to behave around the boundaries:
gradient flows are moving away from the boundaries 
(except for those flows that are already on the boundaries).

\begin{thm}[Stability of descending $d$-manifolds]
Let $f,\tilde{f}:\mathbb{K}\subset\mathbb{R}^d\mapsto \mathbb{R}$ be two smooth functions with 
bounded third derivatives defined as above and let $B,\tilde{B}$ be the boundaries of the associated 
ascending manifolds. Assume $f$ is a Morse function satisfying condition {\bf (D)}. 
When $\norm{f-\tilde{f}}^*_{3,\max}$ is sufficiently small,
\begin{equation}
\Haus(\tilde{B},B) = O(\norm{\tilde{f}-f}_{1,\max}).
\end{equation}
\label{thm::Haus}
\end{thm}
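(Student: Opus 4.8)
The plan is to avoid tracking the gradient flow over infinite time (which, as noted in the discussion of \cite{arias2016estimation}, produces nonuniform constants that blow up near $B$) and instead to characterize $B$ \emph{statically}, as a nondegenerate zero set of a projected gradient, so that the comparison with $\tilde{B}$ reduces to a quantitative implicit function theorem. The starting observation is that along any separatrix the gradient is tangent: if $x\in D_j$ then $g(x)$ lies in the tangent space of $D_j$, so with the normal frame $V(x)=(v_1(x),\dots,v_{d-j}(x))$ we have $V(x)^T g(x)=0$ on all of $B$. The normal derivative of this map is precisely the projected Hessian $H_V(x)=V(x)^T H(x)V(x)$, which by Assumption~\textbf{(D)} satisfies $\lambda_{\min}(H_V(x))\ge H_{\min}>0$ uniformly on $B$. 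Thus $B$ is, stratum by stratum, a transversal (nondegenerate) zero set of $x\mapsto V(x)^T g(x)$, and the crossing is controlled from below by $H_{\min}$.

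First I would fix a tubular neighborhood $\mathcal{U}=B\oplus\delta$ and extend the normal frame $V$ smoothly into $\mathcal{U}$, using the bounded third derivatives of $f$ to keep $\delta$, the smoothness of $V$, and the lower bound on the normal derivative all uniform. On $\mathcal{U}$ I set $\Phi(x)=V(x)^T g(x)$ and $\tilde{\Phi}(x)=V(x)^T\tilde{g}(x)$, where $\tilde{g}=\nabla\tilde{f}$. Since $\|\tilde{g}-g\|_\infty$ is comparable to $\norm{\tilde{f}-f}_{1,\max}$, we get $\|\tilde{\Phi}-\Phi\|_\infty=O(\norm{\tilde{f}-f}_{1,\max})$, while the normal derivative of $\tilde{\Phi}$ differs from $H_V$ by $O(\norm{\tilde{f}-f}^*_{2,\max})$ and hence, once $\norm{f-\tilde{f}}^*_{3,\max}$ is small enough, stays bounded below by $H_{\min}/2$. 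A quantitative implicit function theorem (equivalently, the standard stability of nondegenerate zero sets) then yields that the zero set $Z=\{x\in\mathcal{U}:\tilde{\Phi}(x)=0\}$ is a manifold with $\Haus(Z,B)=O(\norm{\tilde{f}-f}_{1,\max}/H_{\min})$; the lower bound $H_{\min}$ furnishes both inclusions in the Hausdorff distance. Because the lower strata $D_{d-2},\dots,D_0$ lie in the closure of $D_{d-1}$ and the Hausdorff distance is insensitive to closure, it suffices to control the top stratum, though the general $H_V$ formulation handles every stratum simultaneously.

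The step I expect to be the main obstacle is identifying the analytically convenient zero set $Z$ with the genuine geometric boundary $\tilde{B}$ of $\tilde{f}$'s descending $d$-manifolds. Here I would invoke Lemma~\ref{lem::critical} to guarantee that $\tilde{f}$ is itself Morse (indeed Morse--Smale), with each critical point of $f$ matched to a nearby critical point of $\tilde{f}$ of the same index at distance $O(\norm{\tilde{f}-f}_{1,\max})$. One must then check that $Z$, defined using $f$'s frame $V$, really is the stable-manifold boundary for $\tilde{f}$: the $\tilde{f}$-normal directions differ from $v_k(x)$ only at second order in the perturbation, so $\{V^T\tilde{g}=0\}$ and the true separatrix agree to leading order, and the sign of $\tilde{\Phi}$ across the normal slice (positive normal curvature inherited from Assumption~\textbf{(D)}) certifies that each point of $Z$ is a normal-direction valley bottom whose $\tilde{f}$-flow converges to a non-maximum of $\tilde{f}$, i.e.\ lies on $\tilde{B}$. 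The remaining care is uniformity where strata of different dimension meet: near these junctions the number of normal directions jumps and the single-chart argument must be patched, which is where the Morse--Smale transversality of $f$ and the uniform bound $H_{\min}$ are used to keep the neighborhood $\mathcal{U}$ and all constants independent of the base point.
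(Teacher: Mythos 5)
The fatal gap is exactly at the step you flagged as the main obstacle: identifying the zero set $Z=\{x\in\mathcal{U}: V(x)^T\tilde{g}(x)=0\}$ with $\tilde{B}$. Membership in $\tilde{B}$ is a global, dynamical property---it is determined by the asymptotic destination of the $\tilde{f}$-flow---and no pointwise condition of the form ``projected gradient vanishes and projected Hessian is positive'' certifies it. Concretely, take the local saddle model $f(x_1,x_2)=\tfrac12 x_1^2-\tfrac12 x_2^2$ (a saddle separating two maxima), so that locally $B=\{x_1=0\}$, $V=e_1$, and {\bf (D)} holds with $H_{\min}=1$, and perturb to $\tilde{f}=f+\epsilon x_1x_2$. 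The saddle of $\tilde{f}$ stays at the origin, and $\tilde{B}$ is locally the stable eigenspace of the Hessian of $\tilde{f}$, the line $x_1=-\frac{\epsilon}{1+\sqrt{1+\epsilon^2}}\,x_2$ (slope $\approx -\epsilon/2$), whereas $Z=\{x_1+\epsilon x_2=0\}$ (slope $-\epsilon$). So $Z\cap\tilde{B}$ is just the origin: every point of $Z$ with $x_2\neq 0$ lies strictly inside a basin of a maximum of $\tilde{f}$ and its $\tilde{f}$-flow converges to that maximum, so the ``valley bottom'' certification is false; also, the normal of $\tilde{B}$ differs from $V$ at \emph{first} order in the perturbation, not second, contrary to what you assert. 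In this example $Z$ and $\tilde{B}$ do happen to lie within $O(\epsilon)$ of each other---but proving that $\tilde{B}$ is close to $Z$ (or to anything) is precisely the content of the theorem, and it cannot be read off from the sign of $\tilde{\Phi}$ on normal slices.

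There is a second, related omission: your argument lives entirely in the tube $\mathcal{U}=B\oplus\delta$, so at best it yields the inclusion $B\subset \tilde{B}\oplus r$. The other inclusion, $\tilde{B}\subset B\oplus r$, requires ruling out pieces of $\tilde{B}$ deep inside the cells of $f$, i.e., showing that every point at distance $\geq r$ from $B$ flows under $\tilde{f}$ to a local maximum. That is an inherently dynamical statement, and it is where the paper spends most of its effort: Lemmas \ref{lem::Gdist}--\ref{lem::destination} show that flows starting near $B$ never return closer to $B$ (Lemma \ref{lem::dist}), that the gradient along such flows is uniformly bounded below away from critical points (Lemmas \ref{lem::eigen}, \ref{lem::grad}), hence travel times are uniformly bounded, which upgrades the pointwise destination convergence of \cite{arias2016estimation} to the uniform statement of Lemma \ref{lem::destination} and gives the crude localization $\Haus(B,\tilde{B})<\delta_0$ (Part 1 of the paper's proof). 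Your static mechanism for the rate is, in spirit, the same as the paper's Part 2: the contradiction argument there uses exactly the quantitative normal positivity $e(x)^Tg(x)\geq \tfrac12 H_{\min}\delta(x)$ of Lemma \ref{lem::Gdist}, applied to both $f$ and $\tilde{f}$ at a Hausdorff-attaining pair, in place of an implicit function theorem. So your proposal could be repaired by prepending the flow-based localization; as written, it is missing that ingredient, and the identification $Z=\tilde{B}$ on which it rests is false.
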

This theorem shows that the boundaries of descending $d$-manifolds for two Morse functions are close
to each other and the difference between the boundaries is controlled
by the rate of the first derivative difference.

Similarly to descending manifolds, we can define all the analogous quantities
for ascending manifolds.
We introduce the following assumption:

\vspace{1cm}

{\bf Assumption (A):} We assume 
$H_{\max} = \max_{x\in \partial A_0} \lambda_{\max} (H_V(x)) < 0.$

%\ATTNC{Changed to max from min. Right?}

\vspace{1cm}
Note that $\lambda_{\max}(M)$ denotes the largest eigenvalue of a matrix $M$.
If $M$ is a scalar, $\lambda_{\max}(M)=M$.
Under assumption (A), 
we have a similar stability result (Theorem~\ref{thm::Haus}) for ascending manifolds.
Assumptions (A) and (D) together imply the stability of $d$-cells.

Theorem~\ref{thm::Haus} can be applied to nonparametric density estimation.
Our goal is to estimate the boundary of the descending $d$-manifolds, $B$,
of the unknown population density function $p$. % (or its smooth surrogate $p_h$).
Our estimator is $\hat{B}_n$,
the boundary of the descending $d$-manifolds to a nonparametric density estimator 
e.g. the kernel density estimate $\hat{p}_ n$.
Then under certain regularity condition, their difference is given by
$$
\Haus \left(\hat{B}_n, B\right) = O\left(\norm{\hat{p}_n-p}_{1,\max}\right).
$$
We will see this result in the next section when we discuss mode clustering.

Similar reasoning works for the nonparametric regression case.
Assume that we are interested in $B$, the boundary of descending $d$-manifolds, for the regression function
$m(x) = \mathbb{E}(Y|X=x)$. 
And our estimator $\hat{B}$ is again a plug-in estimate based on 
$\hat{m}_n(x)$, a nonparametric regression estimator (e.g., kernel estimator).
Then under mild regularity conditions,
$$
\Haus \left(\hat{B}_n, B\right) = O\left(\norm{\hat{m}_n-m}_{1,\max}\right).
$$

\subsection{Consistency of Mode Clustering}\label{sec::thm::MC}

A direct application of Theorem~\ref{thm::Haus} is the consistency
of mode clustering.
%Note that several literatures have attempted to solve this problem
%but none of them formally derive the clustering consistency in term
%A direct application of Theorem~\ref{thm::Haus} is the consistency
%of mode clustering \citep{Li2007,azzalini2007clustering,
%Chacon2012,arias2016estimation,chacon2015population}.
Let $K^{(\alpha)}$ be the $\alpha$-th derivative of $K$ and let $\mathbf{BC}^r$
denote the collection of functions with bounded continuously derivatives up
to the $r$-th order.
We consider the following two assumptions on the kernel function:
\begin{itemize}
\item[\bf(K1)] The kernel function $K\in\mathbf{BC}^3$ and is symmetric, non-negative and 
$$\int x^2K^{(\alpha)}(x)dx<\infty,\qquad \int \left(K^{(\alpha)}(x)\right)^2dx<\infty
$$ 
for all $\alpha=0,1,2,3$.
\item[\bf(K2)] The kernel function satisfies condition $K_1$ of
  \cite{Gine2002}. That is, there exists some $A,v>0$ such that for
  all $0<\epsilon<1$,
$\sup_Q N(\mathcal{K}, L_2(Q), C_K\epsilon)\leq \left(\frac{A}{\epsilon}\right)^v,$
where $N(T,d,\epsilon)$ is the $\epsilon-$covering number for a
semi-metric space $(T,d)$ and
$$
\mathcal{K} = 
\Biggl\{u\mapsto K^{(\alpha)}\left(\frac{x-u}{h}\right)
: x\in\R^d, h>0,|\alpha|=0,1,2\Biggr\}.
$$
\end{itemize}

(K1) is a common assumption; see \cite{wasserman2006all}.  (K2) is a
weak assumption guarantee the consistency for KDE under $L_{\infty}$
norm; this assumption first appeared in \cite{Gine2002} and has been
widely assumed \citep{Einmahl2005,rinaldo2010generalized,   genovese2012geometry,
rinaldo2012stability,genovese2014nonparametric, chen2015asymptotic}.

\begin{thm}[Consistency for mode clustering]
Let $p,\hat{p}_n$ be the density function and the KDE.
Let $B$ and $\hat{B}_n$ be the boundaries of clusters by 
mode clustering over $p$ and $\hat{p}_n$ respectively.
Assume (D) for $p$ and (K1--2),
%then as $\norm{\hat{p}_n-p}^*_{3,\max}$ is sufficiently small,
then when $\frac{\log n }{nh^{d+6}}\rightarrow 0, h\rightarrow 0$,
$$
\Haus\left(\hat{B}_n,B\right) = O(\norm{\hat{p}_n-p}_{1,\max}) = O(h^2) + O_{\P}\left(\sqrt{\frac{\log (n)}{nh^{d+2}}}\right).
$$
\label{thm::mode}
\end{thm}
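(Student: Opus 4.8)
The plan is to deduce Theorem~\ref{thm::mode} as an essentially immediate corollary of the stability result Theorem~\ref{thm::Haus}, by combining it with standard uniform convergence rates for the kernel density estimator and its derivatives. The structure of the argument is a clean separation into two pieces: a deterministic stability bound and a stochastic approximation bound.

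First I would verify that the hypotheses of Theorem~\ref{thm::Haus} are met with $f=p$ and $\tilde f = \hat p_n$. The theorem requires both functions to be smooth with bounded third derivatives, $p$ to be a Morse function satisfying Assumption (D), and $\norm{\hat p_n - p}^*_{3,\max}$ to be sufficiently small. The Morse and (D) conditions on $p$ are assumed directly. The smoothness of $\hat p_n$ follows from (K1), which gives $K \in \mathbf{BC}^3$. The requirement that $\norm{\hat p_n - p}^*_{3,\max}$ be small is where I would invoke the stochastic control described below: under (K1)--(K2) and the bandwidth condition $\frac{\log n}{n h^{d+6}} \to 0$, this quantity tends to zero in probability, so with probability tending to one the smallness hypothesis holds and Theorem~\ref{thm::Haus} applies, yielding the deterministic conclusion
\begin{equation}
\Haus\left(\hat B_n, B\right) = O\left(\norm{\hat p_n - p}_{1,\max}\right).
\end{equation}

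Second, I would supply the rate for $\norm{\hat p_n - p}_{1,\max}$, the elementwise sup-norm difference of the gradients. This is a standard KDE result. A bias--variance decomposition gives a bias term of order $O(h^2)$ (from the symmetry and finite second-moment conditions in (K1), via a Taylor expansion of the smoothed density), and a stochastic term controlled by empirical process theory. The covering-number condition (K2), which is condition $K_1$ of \cite{Gine2002}, is precisely what delivers the uniform (in $x$) variance bound; applying it to the class of first-derivative kernels $K^{(\alpha)}$ with $|\alpha|=1$ yields the fluctuation rate $O_\P\left(\sqrt{\frac{\log n}{n h^{d+2}}}\right)$, where the exponent $d+2$ reflects differentiating the kernel once (one extra factor of $h^{-1}$ in each of the $d$ dimensions relative to the density itself, giving $h^{-(d+2)}$ in the variance). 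Combining,
\begin{equation}
\norm{\hat p_n - p}_{1,\max} = O(h^2) + O_\P\left(\sqrt{\frac{\log n}{n h^{d+2}}}\right),
\end{equation}
and substituting into the stability bound gives the stated result. The analogous rate for $\norm{\hat p_n - p}^*_{3,\max}$ (needed only to check smallness, not for the final rate) involves $h^{-(d+6)}$ in the variance, which is exactly why the bandwidth hypothesis is stated as $\frac{\log n}{n h^{d+6}} \to 0$ rather than with a smaller exponent.

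The main obstacle is not in either piece individually but in the bookkeeping that justifies applying the deterministic theorem on a high-probability event while simultaneously carrying a probabilistic rate through its conclusion. Concretely, the constant hidden in the $O(\cdot)$ of Theorem~\ref{thm::Haus} depends on $p$ through $H_{\min}$ and the third-derivative bounds, and I must confirm this constant is stable under the small perturbation $\hat p_n - p$ so that it does not interact with the randomness; since that constant is a fixed functional of the population density $p$, it is genuinely deterministic, and the only randomness enters through $\norm{\hat p_n - p}_{1,\max}$. The clean way to present this is to condition on the event $\{\norm{\hat p_n - p}^*_{3,\max} \le \delta_0\}$ for the threshold $\delta_0$ supplied by Theorem~\ref{thm::Haus}, note this event has probability tending to one, and on it apply the stability bound verbatim, so that the $O_\P$ in the derivative rate transfers directly to the Hausdorff distance.
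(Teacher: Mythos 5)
Your proposal is correct and follows exactly the route the paper intends: the authors omit the proof, stating it is ``simply to combine Theorem~\ref{thm::Haus} and the rate of convergence for estimating the gradient of density using KDE (Theorem~\ref{thm::KDE}),'' which is precisely your two-piece decomposition. Your additional care in conditioning on the event $\{\norm{\hat p_n - p}^*_{3,\max} \le \delta_0\}$ to justify applying the deterministic stability bound is a sensible (and slightly more explicit) treatment of a step the paper leaves implicit.
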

The proof is simply to combine Theorem~\ref{thm::Haus} and the rate of convergence 
for estimating the gradient of density using KDE (Theorem~\ref{thm::KDE}). 
Thus,
we omit the proof.
Theorem~\ref{thm::mode} gives a bound for the rate of convergence
for the boundaries for mode clustering.
The rate can be decomposed into two parts, the bias $O(h^2)$
and the (square root of) variance $O_{\P}\left(\sqrt{\frac{\log (n)}{nh^{d+2}}}\right)$.
This rate is the same for the $\cL_{\infty}$-loss of estimating the gradient of a density function,
which makes sense since the mode clustering is completely determined
by the gradient of density.

Another way to describe the consistency for
mode clustering is to show that the proportion
of data points that are \emph{incorrectly clustered (mis-clustered)}
converges to $0$.
This can be quantified by the use of Rand index 
\citep{rand1971objective, hubert1985comparing, vinh2009information},
which measures the similarity between two
partitions of the data points.
Let $\dest(x)$ and $\hat{\dest}_n(x)$ be the destination
of gradient of the true density function $p(x)$ and the KDE $\hat{p}_n(x)$.
For a pair of points $x,y$, we define 
\begin{equation}
%\begin{aligned}
\Psi(x,y)  = \left\{ 
  \begin{array}{l l}
    1& \quad \text{if $\dest(x)=\dest(y)$}\\
    0 & \quad \text{if $\dest(x)\neq \dest(y)$}
  \end{array} \right.
,\quad
\hat{\Psi}_n(x,y)  = \left\{ 
  \begin{array}{l l}
    1& \quad \text{if $\hat{\dest}_n(x)=\hat{\dest}_n(y)$}\\
    0 & \quad \text{if $\hat{\dest}_n(x)\neq \hat{\dest}_n(y)$}
  \end{array} \right.
  \label{eq::rand1}
%\end{aligned}
\end{equation}
Thus, $\Psi(x,y)=1$ if $x,y$ are in the same cluster
and $0$ if they are not.
The Rand index for mode clustering using $p$ versus using $\hat{p}_n$ is 
\begin{equation}
\rand\left(\hat{p}_n,p\right) = 1 - {n \choose 2}^{-1}\sum_{i\neq j}\left|\Psi(X_i,X_j)-\hat{\Psi}_n(X_i,X_j)\right|,
    \label{eq::rand2}
\end{equation}
which is the proportion of pairs of data points
that the two clustering results disagree on.
If two clusterings output the same partition,
the Rand index will be $1$.

\begin{thm}[Bound on Rand Index]
Assume (D) for $p$ and (K1--2).
Then when $\frac{\log n }{nh^{d+6}}\rightarrow 0, h\rightarrow 0$,
the adjusted Rand index
$$
\rand\left(\hat{p}_n,p\right)= 1-O(h^2) - O_{\P}\left(\sqrt{\frac{\log (n)}{nh^{d+2}}}\right).
$$
\label{thm::number}
\end{thm}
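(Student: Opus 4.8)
The plan is to reduce the Rand-index bound to a volume estimate for a thin tube around the cluster boundary $B$, using the Hausdorff stability already furnished by Theorem~\ref{thm::mode}. By the definition \eqref{eq::rand2} it suffices to show
$$
\binom{n}{2}^{-1}\sum_{i\neq j}\left|\Psi(X_i,X_j)-\hat{\Psi}_n(X_i,X_j)\right| = O(h^2)+O_\P\left(\sqrt{\frac{\log n}{nh^{d+2}}}\right).
$$
Each summand equals $1$ exactly when the two clusterings disagree on the pair $(X_i,X_j)$. The first observation I would use is that such a disagreement forces at least one of $X_i,X_j$ to be \emph{mis-clustered}, i.e.\ to lie in the set $M_n$ of points whose true label (under $\dest$) differs from its estimated label (under $\hat{\dest}_n$): if both endpoints are correctly clustered, then $\dest$ and $\hat{\dest}_n$ agree on the cluster membership of each, so $\Psi$ and $\hat{\Psi}_n$ must coincide on the pair.

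Next I would control $M_n$ geometrically. Theorem~\ref{thm::mode} gives $\Haus(\hat{B}_n,B)=\epsilon_n$ with $\epsilon_n=O(h^2)+O_\P(\sqrt{\log n/(nh^{d+2})})$, while the stability of critical points (Lemma~\ref{lem::critical}) together with Assumption (D) yields, once $\norm{\hat p_n-p}^*_{3,\max}$ is small, a bijective correspondence between the modes, and hence the descending $d$-manifolds, of $p$ and $\hat p_n$. A point $x$ whose distance to $B$ exceeds $\epsilon_n$ then lies strictly inside a single estimated manifold $\hat{D}_{d,\ell}$, so $\hat{\dest}_n(x)$ lands on the unique mode of that manifold, which under the correspondence is the correct cluster. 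This gives the inclusion $M_n\subseteq B\oplus\epsilon_n$, a tube of radius $\epsilon_n$ about $B$.

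Writing $N_n=\#\{i:X_i\in M_n\}$, an elementary count shows that the number of unordered pairs with at least one endpoint in $M_n$ is at most $nN_n$, so the left-hand side above is bounded by $2N_n/(n-1)$. It then remains to bound $N_n/n$. Since $B$ is a finite union of manifolds of dimension at most $d-1$, the tube $B\oplus\epsilon_n$ has Lebesgue measure $O(\epsilon_n)$; as $p$ is bounded, $\P(X\in B\oplus\epsilon_n)=O(\epsilon_n)$. Replacing the random $\epsilon_n$ by a deterministic high-probability upper bound and applying the law of large numbers (the fluctuation being of smaller order than the shrinking mean), I would obtain $N_n/n=O_\P(\epsilon_n)$, whence $1-\rand(\hat p_n,p)=O(\epsilon_n)$, which is the claim.

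The main obstacle is the inclusion $M_n\subseteq B\oplus\epsilon_n$ in the second paragraph. Hausdorff closeness of the boundaries alone does not immediately guarantee that deep interior points flow to the matching mode; one must combine it with the mode correspondence and with the fact (underlying Theorem~\ref{thm::Haus}) that, away from an $\epsilon_n$-tube, the estimated gradient flow cannot cross into a neighboring cell. Making precise that an interior point is correctly labeled, i.e.\ that the cluster correspondence is consistent with the flow of $\hat p_n$, is the delicate part, whereas the pair-counting and tube-volume estimates are routine.
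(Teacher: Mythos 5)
Your proposal is correct and follows essentially the same route as the paper's proof: disagreement on a pair forces one point into the symmetric-difference region of the basins, that region lies in a tube $B\oplus \Haus(\hat{B}_n,B)$ whose volume (hence probability, since $p$ is bounded) is $O\left(\Haus(\hat{B}_n,B)\right)$, and Theorem~\ref{thm::mode} supplies the rate; the paper is equally brief about the one step you flag as delicate (that points outside the tube are correctly labeled, via the mode correspondence of Lemma~\ref{lem::critical}). The only difference is how the random tube radius is handled when passing to the empirical proportion: the paper invokes a VC/empirical-process uniform bound over the family $\{B\oplus r: r>0\}$, whereas you condition on a high-probability deterministic majorant of the radius and apply the law of large numbers --- an equally valid, slightly more elementary device.
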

Theorem~\ref{thm::number} shows that
the Rand index converges to $1$ in probability,
which establishes the consistency of mode clustering in
an alternative way.
Theorem~\ref{thm::number} shows that the proportion
of data points that are incorrectly assigned 
(compared with mode clustering using population $p$)
is bounded by the rate $O(h^2) + O_{\P}\left(\sqrt{\frac{\log (n)}{nh^{d+2}}}\right)$
asymptotically.

\cite{azizyan2015risk} also derived the convergence rate of the mode clustering for the rand index. 
Here we briefly compare our results to theirs. 
\cite{azizyan2015risk} consider a low-noise condition that
leads to a fast convergence rate when clusters are well-separated.
Their approach can even be applied to the case of increasing dimensions. 
In our case (Theorem~\ref{thm::number}), 
we consider a fixed dimension scenario but we do not assume the low-noise condition. 
Thus, the main difference between
Theorem~\ref{thm::number} and the result in \cite{azizyan2015risk} is the assumptions being made
so our result complements the findings in \cite{azizyan2015risk}.
%In a sense, our assumption is weaker when the dimension is fixed but the convergence rate is slower and 

%{\bf YEN-CHI: You need to reference our paper with Martin and say how the
%results compare.}

\subsection{Consistency of Morse-Smale Regression}\label{sec::thm::MSR}

In what follows, we will show that $\hat{m}_{n, \MSR}(x)$
is a consistent estimator of $m_{\MSR}(x)$.
%Moreover, if we consider estimating the smoothed version of 
%$m(x)$, denoted as $m_h(x) = \mathbb{E}(\hat{m}_n(x))$,
%with fixed smoothing parameter $h$
%and we use MSR to represent $m_h$, denoted as $m_{h, \MSR}$,
%we will obtain a near parametric rate for estimating $m_{h, \MSR}$
%by $\hat{m}_{n,MSR}$.
Recall that
\begin{equation}
m_{\MSR}(x) = \mu_{ \ell} + \beta_{\ell}^T x, \mbox{ for }x \in E_{\ell},
\end{equation}
where $E_{\ell}$ is the $d$-cell defined on $m$
and the parameters are 
\begin{equation}
\begin{aligned}
(\mu_{\ell}, \beta_{\ell})
&= \underset{\mu,\beta}{\sf \argmin} \mbox{ }\mathbb{E}\left((Y- \mu-\beta^TX)^2|X\in E_{\ell}\right).
\end{aligned}
\label{eq::LSE3}
\end{equation}
And $\hat{m}_{n,\MSR}$ is the two-stage estimator to $m_{\MSR}(x)$ defined by
\begin{equation}
\hat{m}_{n, \MSR}(x) = \hat{\mu}_\ell+\hat{\beta}_\ell^Tx, \mbox{ for }x \in \hat{E}_\ell,
\end{equation}
where $\{\hat{E}_\ell: \ell=1,\cdots, \hat{L}\}$ are the collection of cells of the pilot nonparametric regression estimator $\hat{m}_n$
and $\hat{\mu}_\ell, \hat{\beta}_\ell$ are the regression parameters from equation \eqref{eq::LSE2}:
\begin{equation}
\begin{aligned}
(\hat{\mu}_\ell, \hat{\beta}_\ell)= \underset{\mu,\beta}{\sf\argmin} \mbox{ }\sum_{i: X_i\in \hat{E}_\ell}(Y_i- \mu-\beta^TX_i)^2.
\end{aligned}
\end{equation}

\begin{thm}[Consistency of Morse-Smale Regression]
Assume (A) and (D) for $m$ and assume $m$ is a Morse-Smale function.
%Then as $\norm{\hat{m}_n-m}^*_{3,\max}$ is sufficiently small,
Then when $\frac{\log n }{nh^{d+6}}\rightarrow 0, h\rightarrow 0$,
we have
\begin{equation}
\left|m_{\MSR}(x)-\hat{m}_{n,\MSR}(x)\right| = 
O_{\P}\left(\frac{1}{\sqrt{n}}\right) + O\left(\norm{\hat{m}_n-m}_{1,\max}\right)
\label{eq::thm::MSR1}
\end{equation}
uniformly for all $x$ except for a set $\mathbb{N}_n$ with Lebesgue measure 
$O_\P(\norm{\hat{m}_n-m}_{1,\max})$,

%Moreover, if (A,D) holds for the smoothed regression function $m_h$
%(assumed to be Morse-Smale)
%and (K1--2) holds for the kernel function, then uniformly for all $x$
%except for a set with Lebesgue measure $O_{\P}\left(\sqrt{\frac{\log n}{n}}\right)$,
%\begin{equation}
%\left|m_{h,\MSR}(x)-\hat{m}_{n,\MSR}(x)\right| 
%= O_{\P}\left(\sqrt{\frac{\log(n)}{n}}\right)
%\label{eq::thm::MSR2}
%\end{equation}

\label{thm::MSR}
\end{thm}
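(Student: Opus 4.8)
The plan is to decompose the error into two essentially independent pieces: the error from estimating the \emph{partition} into $d$-cells and the error from estimating the \emph{linear coefficients} inside each cell. First I would apply Theorem~\ref{thm::Haus} to both the descending $d$-manifolds (using assumption (D)) and the ascending $0$-manifolds (using assumption (A)) of $m$, so that the boundary $B\cup\partial A_0$ of the $d$-cells of $\hat m_n$ lies within Hausdorff distance $O(\norm{\hat m_n-m}_{1,\max})$ of that of $m$. Since $m$ is Morse–Smale its critical points are non-degenerate, so by the critical-point correspondence (Lemma~\ref{lem::critical}) there is, for $\norm{\hat m_n-m}^*_{3,\max}$ small, a bijection between the cells of $m$ and those of $\hat m_n$; hence $\hat L=L$ and each $E_\ell$ is matched to a unique $\hat E_\ell$. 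I would then define $\mathbb{N}_n$ to be the tube of radius $C\,\norm{\hat m_n-m}_{1,\max}$ about $B\cup\partial A_0$. Because this boundary is a finite union of smooth manifolds of dimension at most $d-1$, $\Leb(\mathbb{N}_n)=O(\norm{\hat m_n-m}_{1,\max})$, and for every $x\notin\mathbb{N}_n$ the stability bound forces $x\in E_\ell\iff x\in\hat E_\ell$, so outside $\mathbb{N}_n$ both estimators evaluate on the same matched cell.

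Next, fixing a matched pair $(E_\ell,\hat E_\ell)$, I would introduce the oracle least-squares estimator $(\tilde\mu_\ell,\tilde\beta_\ell)$ computed from the \emph{true} cell $E_\ell$ and split
$$
(\hat\mu_\ell,\hat\beta_\ell)-(\mu_\ell,\beta_\ell)
=\left[(\hat\mu_\ell,\hat\beta_\ell)-(\tilde\mu_\ell,\tilde\beta_\ell)\right]
+\left[(\tilde\mu_\ell,\tilde\beta_\ell)-(\mu_\ell,\beta_\ell)\right].
$$
The second bracket is a standard M-estimation problem: $(\mu_\ell,\beta_\ell)$ solves the population normal equations
$$
\E\left[(1,X^T)^T(Y-\mu-\beta^TX)\mid X\in E_\ell\right]=0,
$$
the oracle solves the empirical analogue, and because the density of $X$ is bounded below on the positive-measure set $E_\ell$ the Gram matrix $\E[(1,X)(1,X)^T\mid X\in E_\ell]$ is invertible; a central limit argument over the $\sim n\,\P(X\in E_\ell)$ points in $E_\ell$ then yields $O_\P(1/\sqrt n)$, uniformly over the finitely many $\ell$.

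The first bracket is the cell-perturbation term and I expect it to be the main obstacle. The data whose cell labels differ all have $X_i$ in the symmetric difference $\hat E_\ell\,\triangle\,E_\ell\subset\mathbb{N}_n$, so there are $O_\P(n\,\norm{\hat m_n-m}_{1,\max})$ of them. I would show that adding or removing this fraction of points perturbs the empirical second-moment matrix and the empirical cross-moment vector each by $O_\P(\norm{\hat m_n-m}_{1,\max})$ — using compactness of $\K$ to bound the $X$-contributions and the finite second moment of $Y$ to bound the response contributions — and then, inverting the uniformly well-conditioned Gram matrix, transfer this to an $O_\P(\norm{\hat m_n-m}_{1,\max})$ bound on the coefficients. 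The delicate points are that $\mathbb{N}_n$ is itself random through $\hat m_n$, so I would either condition on the pilot stage or pass to a uniform bound over the relevant class of tube-shaped regions, and that the control must hold simultaneously across all $L$ cells.

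Finally, assembling the pieces, for $x\notin\mathbb{N}_n$ with $x\in E_\ell=\hat E_\ell$, compactness of $\K$ gives
$$
\left|m_{\MSR}(x)-\hat m_{n,\MSR}(x)\right|
=\left|(\mu_\ell-\hat\mu_\ell)+(\beta_\ell-\hat\beta_\ell)^Tx\right|
\le|\mu_\ell-\hat\mu_\ell|+\norm{\beta_\ell-\hat\beta_\ell}\sup_{x\in\K}\norm{x},
$$
and combining the two brackets yields $O_\P(1/\sqrt n)+O(\norm{\hat m_n-m}_{1,\max})$ uniformly in $x\notin\mathbb{N}_n$, with $\Leb(\mathbb{N}_n)=O_\P(\norm{\hat m_n-m}_{1,\max})$, as claimed.
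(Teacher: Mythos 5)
Your proposal is correct and follows essentially the same route as the paper: your oracle estimator $(\tilde\mu_\ell,\tilde\beta_\ell)$ is exactly the paper's $(\hat{\mu}_{0,\ell},\hat{\beta}_{0,\ell})$ built from the true cell, your two-bracket split matches the paper's decomposition into a perturbation term (controlled by counting the $O(n\norm{\hat m_n-m}_{1,\max})$ observations in the symmetric difference $\hat E_\ell\,\triangle\,E_\ell$ and propagating through the Gram-matrix inverse) plus a standard $O_\P(1/\sqrt n)$ linear-regression term, and your tube $\mathbb{N}_n$ around the cell boundaries is the paper's exclusion set. The uniformity-over-random-regions issue you flag is precisely what the paper resolves by invoking the VC argument underlying Theorem~\ref{thm::number}.
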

Theorem~\ref{thm::MSR} states that when we have a consistent pilot nonparametric regression estimator 
(such as the kernel regression), the proposed MSR estimator converges
to the population MSR.
Similarly as in Theorem~\ref{thm::MSS}, the set $\mathbb{N}_n$ are regions around the boundaries of cells
where we cannot distinguish their host cell.
Note that when we use the kernel regression as the pilot estimator $\hat{m}_n$,
Theorem~\ref{thm::MSR} becomes 
$$
\left|m_{\MSR}(x)-\hat{m}_{n,\MSR}(x)\right| = 
O(h^2)+O_{\P}\left(\sqrt{\frac{\log n}{nh^{d+2}}}\right).
$$
under regular smoothness conditions. 

Now we consider a special case where we may obtain parametric rate of convergence
for estimating $m_{\MSR}$.
Let $\mathcal{E} = \partial \left(E_1\bigcup\cdots\bigcup E_L\right)$ be the boundaries
of all cells.
We consider the following low-noise condition:
\begin{equation}
\P\left(X\in \mathcal{E}\oplus \epsilon\right) \leq A \epsilon^\beta,
\label{eq::low_noise}
\end{equation}
for some $A,\beta>0$.
Equation \eqref{eq::low_noise} is Tsybakov's low noise condition \citep{audibert2007fast}
applied to the boundaries of cells.
Namely, \eqref{eq::low_noise} states that it is unlikely to many observations near
the boundaries of cells of $m$.
Under this low-noise condition, we obtain the following result using kernel regression.

\begin{thm}[Fast Rate of Convergence for Morse-Smale Regression]
Let the pilot estimator $\hat{m}_n$ be the kernel regression estimator.
Assume (A) and (D) for $m$ and assume $m$ is a Morse-Smale function.
Assume also \eqref{eq::low_noise} holds for the covariate $X$
and (K1-2) for the kernel function.
Also assume that $h = O\left(\left(\frac{\log n}{n}\right)^{1/(d+6)}\right)$.
Then uniformly for all $x$ except for a set $\mathbb{N}_n$ with Lebesgue measure 
$O_\P\left(\left(\frac{\log n}{n}\right)^{2/(d+6)}\right)$,
\begin{equation}
\left|m_{\MSR}(x)-\hat{m}_{n,\MSR}(x)\right| = 
O_{\P}\left(\frac{1}{\sqrt{n}}\right) + O_{\P}\left(\left(\frac{\log n}{n}\right)^{2\beta/(d+6)}\right).
\label{eq::thm::MSR2}
\end{equation}
Therefore, when $\beta>\frac{6+d}{4}$, we have 
\begin{equation}
\left|m_{\MSR}(x)-\hat{m}_{n,\MSR}(x)\right| = 
O_{\P}\left(\frac{1}{\sqrt{n}}\right).
\label{eq::thm::MSR3}
\end{equation}
\label{thm::MSR2}
\end{thm}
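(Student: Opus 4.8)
The plan is to bootstrap from Theorem~\ref{thm::MSR}, sharpening the cell-misspecification term by exploiting the low-noise condition~\eqref{eq::low_noise}, and then to substitute the kernel-regression rate for $\norm{\hat{m}_n-m}_{1,\max}$ under the stated bandwidth. First I would record the pilot rate: under (K1--2) the gradient of the kernel regression estimator converges at the same rate as the KDE gradient (the analogue of Theorem~\ref{thm::KDE} used already in Theorem~\ref{thm::mode}), so
$$
\norm{\hat{m}_n-m}_{1,\max} = O(h^2)+O_\P\!\left(\sqrt{\tfrac{\log n}{nh^{d+2}}}\right).
$$
With $h \asymp (\log n/n)^{1/(d+6)}$ both terms are of order $(\log n/n)^{2/(d+6)}$, so writing $\delta_n := \norm{\hat m_n-m}_{1,\max}$ we have $\delta_n = O_\P((\log n/n)^{2/(d+6)})$. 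By Theorem~\ref{thm::Haus} (under (A) and (D)) the Hausdorff distance between $\mathcal{E}$ and its estimate is $O(\delta_n)$, so the boundary shell $\mathcal{E}\oplus C\delta_n$ has Lebesgue measure $O(\delta_n)$. Since the exceptional set $\mathbb{N}_n$—the set of $x$ whose host cell under $m$ and under $\hat m_n$ disagree—is contained in this shell, its Lebesgue measure is $O_\P((\log n/n)^{2/(d+6)})$, exactly as claimed. Note this step is purely geometric and does not invoke~\eqref{eq::low_noise}.

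The heart of the argument is a refined analysis of the least-squares estimates on the complement of $\mathbb{N}_n$, where $x$ lies in matching cells (up to the shell). I would bound $|m_{\MSR}(x)-\hat m_{n,\MSR}(x)| \le |\hat\mu_\ell-\mu_\ell| + \|x\|\,\|\hat\beta_\ell-\beta_\ell\|$ and, using compactness of $\K$ to fix $\|x\|\le R$ uniformly, split the parameter error into a stochastic fluctuation and a target bias caused by fitting over $\hat E_\ell$ rather than $E_\ell$. The fluctuation of each least-squares estimate around its conditional population minimizer is $O_\P(1/\sqrt n)$ by a standard M-estimation/central-limit argument for the normal equations, uniformly over the finitely many cells. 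For the bias, the normal equations over $E_\ell$ and over $\hat E_\ell$ differ only through integrals over the symmetric difference $E_\ell\triangle\hat E_\ell \subset \mathcal{E}\oplus C\delta_n$; since the design Gram matrix is of order $\P(X\in E_\ell)$ (bounded below), while the perturbation of the moment equations is controlled by $\P(X\in\mathcal{E}\oplus C\delta_n)$, the induced shift in $(\mu_\ell,\beta_\ell)$ is $O\big(\P(X\in\mathcal{E}\oplus C\delta_n)\big)$. This is precisely where~\eqref{eq::low_noise} enters: $\P(X\in\mathcal{E}\oplus C\delta_n) \le A(C\delta_n)^\beta = O_\P((\log n/n)^{2\beta/(d+6)})$, replacing the crude $O(\delta_n)$ bound of Theorem~\ref{thm::MSR} by $O(\delta_n^\beta)$. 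Combining the fluctuation and bias pieces yields~\eqref{eq::thm::MSR2}.

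Finally, \eqref{eq::thm::MSR3} follows by comparing exponents: $2\beta/(d+6) > 1/2$ is equivalent to $\beta > (d+6)/4$, in which case $(\log n/n)^{2\beta/(d+6)} = o(n^{-1/2})$ (the polynomial gain in $n$ overwhelms the polylogarithmic factor), so the stochastic term $O_\P(1/\sqrt n)$ dominates.

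The step I expect to be the main obstacle is making the bias bound rigorous: one must show that perturbing the conditioning event by a set of small \emph{probability} perturbs the minimizer of the conditional least-squares criterion only by the same order, uniformly in $\ell$, while simultaneously controlling the dependence between the pilot cells $\hat E_\ell$ and the response data used in the fit (no data splitting is used in the regression). I would handle this most cleanly with a uniform empirical-process bound over the finite collection of candidate cell geometries lying within the $\delta_n$-shell, so that the contamination of each fit by misassigned observations is controlled simultaneously for all admissible $\hat E_\ell$.
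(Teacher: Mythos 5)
Your proposal is correct and follows essentially the same route as the paper: the paper likewise bootstraps from Theorem~\ref{thm::MSR}, using the low-noise condition~\eqref{eq::low_noise} to sharpen the cell-mismatch contribution from $O(\norm{\hat{m}_n-m}_{1,\max})$ to $O(\norm{\hat{m}_n-m}_{1,\max}^\beta)$ (phrased there as a bound on the number of rows where $\mathbb{X}_\ell$ and $\mathbb{X}_{0,\ell}$ differ), then substitutes the kernel-regression rate under the stated bandwidth and compares exponents. If anything, your closing remarks on the uniformity over cell geometries and the dependence between the pilot cells and the regression fit address a point the paper's proof passes over silently.
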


Theorem~\ref{thm::MSR2} shows that when the low noise condition holds,
we obtain a fast rate of convergence for estimating $m_{\MSR}$.
Note that the pilot estimator $\hat{m}_n$ does not ahve to be a kernel estimator;
other approaches such as the local polynomial regression will also work.

\subsection{Consistency of the Morse-Smale Signature}\label{sec::thm::MSS}

%{\bf I SUGGEST CHANGING THIS SECTION TO BE ABOUT THE CONSISTENCY OF THE BIPARTITR GRAPH SIGNATURE}

Another application of Theorem~\ref{thm::Haus} is to
bound the difference of two Morse-Smale signatures.
Let $f$ be a Morse-Smale function with cells
$E_1,\ldots, E_L$.
Recall that the Morse-Smale signatures are the bipartite graph and summary
statistics (locations, density values) for local modes, local minima, and cells.
It is known in the literature (see, e.g., Lemma \ref{lem::critical}) that 
when two functions $\tilde{f},f$ are sufficiently close, then
\begin{equation}
\max_j\|\tilde{c}_j-c_j\| = O\left(\|\tilde{f}-f\|_{1,\max}\right),\quad 
\max_j\|\tilde{f}(\tilde{c}_j)-f(c_j)\| = O\left(\|\tilde{f}-f\|_{\infty}\right),
\label{eq::critical}
\end{equation}
where $\tilde{c}_j, c_j$ are critical points $\tilde{f}$ and $f$ respectively.
This implies the stability of local modes and minima.

So what we need is the stability of the summary statistics 
 $(\eta_\ell^\dagger, \gamma_\ell^\dagger)$ associated with the edges (cells).
Recall that these summaries are defined through \eqref{eq::MSS1}
\begin{equation*}
(\eta_\ell^\dagger, \gamma_\ell^\dagger) = \underset{\eta, \gamma}{\sf argmin} \int_{E_\ell} 
\left(f(x)-\eta-\gamma^Tx\right)^2 dx.
%\label{eq::MSS1}
\end{equation*}
For another function $\tilde{f}$, let
$(\tilde{\eta}_\ell^\dagger, \tilde{\gamma}_\ell^\dagger)$
be its signatures for cell $\tilde{E}_\ell$.
The following theorem shows that if two functions are close,
their
corresponding Morse-Smale signatures
are also close.

\begin{thm}
Let $f$ be a Morse-Smale function satisfying assumptions A and D, and let $\tilde{f}$ be a smooth function.
%Let $f_{\MS}$ and $\tilde{f}_{\MS}$ be the corresponding Morse-Smale approximation functions
%for $f$ and $\tilde{f}$ respectively.
%Then if $\norm{\tilde{f}-f}^*_{3,\max}$ is sufficiently small,
Then when $\frac{\log n }{nh^{d+6}}\rightarrow 0, h\rightarrow 0$,
after relabeling the indices of cells of $\tilde{f}$,
$$
\max_\ell\left\{\|\tilde{\eta}_\ell^\dagger-\eta_\ell^\dagger\|, \|\tilde{\gamma}_\ell^\dagger-\gamma_\ell^\dagger\|\right\} = O\left(\norm{\tilde{f}-f}^*_{1,\max}\right).
$$
\label{thm::MSS}
\end{thm}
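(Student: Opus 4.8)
The plan is to reduce the statement to two ingredients: the stability of the cells $E_\ell$ as \emph{sets}, and a perturbation bound for the least-squares coefficients viewed as a smooth function of both the integration domain and the integrand. Throughout I treat $\norm{\tilde{f}-f}^*_{3,\max}$ as sufficiently small, which is guaranteed asymptotically by the rate condition $\frac{\log n}{nh^{d+6}}\to 0$, $h\to 0$ in the intended applications.

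First I would establish that the cells are stable. Since $f$ satisfies both (A) and (D), applying Theorem~\ref{thm::Haus} to the descending $d$-manifolds and to its ascending analogue shows that the boundaries of the descending $d$-manifolds and ascending $0$-manifolds of $\tilde{f}$ lie within $O(\norm{\tilde{f}-f}_{1,\max})$ in Hausdorff distance of those of $f$. Each $d$-cell is the intersection of a descending $d$-manifold with an ascending $0$-manifold, and by Lemma~\ref{lem::critical} the critical points are in bijection and stable; hence for $\norm{\tilde{f}-f}^*_{3,\max}$ small the cells of $\tilde{f}$ can be relabeled so that $\Haus(\tilde{E}_\ell, E_\ell) = O(\norm{\tilde{f}-f}_{1,\max})$. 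Because the cell boundaries are smooth $(d-1)$-dimensional surfaces of bounded area inside the compact set $\K$, this Hausdorff bound upgrades to a volume bound on the symmetric difference, $\Vol(E_\ell \triangle \tilde{E}_\ell) = O(\norm{\tilde{f}-f}_{1,\max})$.

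Next, write the coefficient vector $\theta_\ell = (\eta_\ell^\dagger, \gamma_\ell^{\dagger T})^T$ in closed form. Setting $\phi(x) = (1, x^T)^T$, the minimizer of \eqref{eq::MSS1} satisfies the normal equations $\theta_\ell = M_\ell^{-1} b_\ell$ with
\[
M_\ell = \int_{E_\ell}\phi(x)\phi(x)^T\,dx, \qquad b_\ell = \int_{E_\ell}\phi(x) f(x)\,dx,
\]
and likewise $\tilde{\theta}_\ell = \tilde{M}_\ell^{-1}\tilde{b}_\ell$ using $\tilde{E}_\ell$ and $\tilde{f}$. The Gram matrix $M_\ell$ is positive definite, hence invertible with $\norm{M_\ell^{-1}}$ bounded, because $E_\ell$ has positive $d$-dimensional Lebesgue measure and is not contained in any hyperplane, so $1, x_1, \ldots, x_d$ are linearly independent in $\cL_2(E_\ell)$. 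Since $\phi$ and $f$ are bounded on $\K$, changing the domain from $E_\ell$ to $\tilde{E}_\ell$ alters each integral by at most a constant times $\Vol(E_\ell \triangle \tilde{E}_\ell)$, and replacing $f$ by $\tilde{f}$ over $\tilde{E}_\ell$ changes $b_\ell$ by at most $\norm{\tilde{f}-f}_\infty \int_{\tilde{E}_\ell}\norm{\phi}$; hence $\norm{\tilde{M}_\ell - M_\ell} = O(\norm{\tilde{f}-f}_{1,\max})$ and $\norm{\tilde{b}_\ell - b_\ell} = O(\norm{\tilde{f}-f}^*_{1,\max})$. Using $\tilde{M}_\ell^{-1} - M_\ell^{-1} = -\tilde{M}_\ell^{-1}(\tilde{M}_\ell - M_\ell)M_\ell^{-1}$ and the decomposition
\[
\tilde{\theta}_\ell - \theta_\ell = \tilde{M}_\ell^{-1}(\tilde{b}_\ell - b_\ell) - \tilde{M}_\ell^{-1}(\tilde{M}_\ell - M_\ell)M_\ell^{-1} b_\ell,
\]
together with the uniform bounds on $\norm{M_\ell^{-1}}$, $\norm{\tilde{M}_\ell^{-1}}$ (the latter stable once $\norm{\tilde{M}_\ell - M_\ell}$ is small) and on $\norm{b_\ell}$, yields $\norm{\tilde{\theta}_\ell - \theta_\ell} = O(\norm{\tilde{f}-f}^*_{1,\max})$ uniformly in $\ell$. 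Reading off the components gives the stated bound on $\norm{\tilde{\eta}_\ell^\dagger - \eta_\ell^\dagger}$ and $\norm{\tilde{\gamma}_\ell^\dagger - \gamma_\ell^\dagger}$.

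The main obstacle I anticipate is the first step: passing from the Hausdorff stability of the one-lower-dimensional boundaries in Theorem~\ref{thm::Haus} to a genuine $d$-dimensional volume bound on $E_\ell \triangle \tilde{E}_\ell$ at the correct rate, while simultaneously justifying the relabeling so that the bijection between cells (equivalently, between mode--minimum pairs) is well-defined and stable. This requires controlling the surface area of the cell boundaries uniformly and ruling out that small perturbations create or destroy cells, both of which rest on the Morse (non-degenerate Hessian) and Morse--Smale (transversality) assumptions via Lemma~\ref{lem::critical}. Once the symmetric-difference volume bound and the relabeling are secured, the remaining matrix-perturbation analysis is routine.
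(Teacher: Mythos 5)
Your proposal is correct and follows essentially the same route as the paper: the paper likewise combines Theorem~\ref{thm::Haus} (applied under both (A) and (D)) and Lemma~\ref{lem::critical} to get the relabeling and the bound $\Leb(\tilde{E}_\ell\triangle E_\ell)=O(\norm{\tilde{f}-f}_{1,\max})$, then writes $(\eta_\ell^\dagger,\gamma_\ell^\dagger)$ as the explicit solution of the normal equations (phrased there via moments of a uniform random variable on $E_\ell$, which is your Gram-matrix formulation up to normalization) and perturbs that closed form. Your added care about the uniform invertibility of the Gram matrix and the explicit inverse-perturbation identity only makes explicit steps the paper leaves implicit.
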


%Theorem~\ref{thm::MSS} shows the stability of $f_{\MS}$
%and thus guarantees the stability of the summary statistics for edges ($d$-cells).
Theorem~\ref{thm::MSS} shows stability of the signatures
$(\eta^\dagger_\ell, \gamma^\dagger_\ell)$.
Note that Theorem~\ref{thm::MSS} also implies that the stability of piecewise approximation 
$$
|f_{\MS}(x)-\tilde{f}_{\MS}(x)| = O\left(\norm{\tilde{f}-f}^*_{1,\max}\right).
$$
%The set $\mathbb{N}_n$ are the regions around boundaries of cells.
%The consistency does not hold for these regions since for a point $x\in\mathbb{N}_n$, 
%we cannot accurately distinguish which
%cell $x$ belong to.
Together with the stability of critical points \eqref{eq::critical},
Theorem~\ref{thm::MSS} proves the stability  
of Morse-Smale signatures.
%and
%for the visualization (see e.g. Figure~\ref{Fig::ex::MSvis} and Algorithm \ref{Alg::MSS}).

\subsubsection{Example: Morse-Smale Density Estimation}

As an example for Theorem~\ref{thm::MSS},
we consider density estimation.
Let $p$ be the density of random sample $X_1,\cdots,X_n$
and recall that $\hat{p}_n$ is the kernel density estimator.
Let $(\eta_\ell^\dagger, \gamma_\ell^\dagger)$ be the signature for
$p$ under cell $E_\ell$
and $(\hat{\eta}_\ell^\dagger, \hat{\gamma}_\ell^\dagger)$
be the signature for $\hat{p}_n$ under cell $\hat{E}_\ell$.
The following corollary guarantees the consistency
of Morse-Smale signatures for the KDE.

\begin{cor}
Assume (A,D) holds for $p$ and the kernel function satisfies (K1--2).
Then when $\frac{\log n }{nh^{d+6}}\rightarrow 0, h\rightarrow 0$,
after relabeling we have
$$
\max_\ell\left\{\|\hat{\eta}_\ell^\dagger-\eta_\ell^\dagger\|, 
\|\hat{\gamma}_\ell^\dagger-\gamma_\ell^\dagger\|\right\}
= O(h^2)+O_{\P}\left(\sqrt{\frac{\log n}{nh^{d+2}}}\right).
$$
\label{thm::density}
\end{cor}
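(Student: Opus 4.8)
The plan is to derive Corollary~\ref{thm::density} as a direct
specialization of Theorem~\ref{thm::MSS} to the case $f = p$ and
$\tilde{f} = \hat{p}_n$. Theorem~\ref{thm::MSS} already gives us, after
relabeling the cells,
$$
\max_\ell\left\{\|\hat{\eta}_\ell^\dagger-\eta_\ell^\dagger\|,
\|\hat{\gamma}_\ell^\dagger-\gamma_\ell^\dagger\|\right\}
= O\left(\norm{\hat{p}_n-p}^*_{1,\max}\right),
$$
so the only remaining task is to control the quantity
$\norm{\hat{p}_n-p}^*_{1,\max}$, the maximum over $j=0,1$ of the
elementwise $\cL_\infty$ differences between $\hat{p}_n$ and $p$ and
their first derivatives. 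Thus the proof reduces entirely to plugging in a
uniform rate of convergence for the kernel density estimator and its
gradient.

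First I would check that the hypotheses of Theorem~\ref{thm::MSS} are met.
We assume (A) and (D) hold for $p$, and under (K1--2) with the stated
bandwidth regime we need $\hat{p}_n$ to be a smooth function close to $p$
in the $\norm{\cdot}^*_{3,\max}$ sense so that $\hat{p}_n$ is itself a
Morse(-Smale) function (cf.\ Lemma~\ref{lem::critical}); this is standard
and follows from the same uniform convergence of derivatives up to order
three that (K1--2) guarantee. Then I would invoke the standard KDE rate:
under (K1--2), when $\frac{\log n}{nh^{d+6}}\rightarrow 0$ and
$h\rightarrow 0$, the elementwise $\cL_\infty$ error for the $j$-th
derivative of the density satisfies the usual bias-variance decomposition
$$
\norm{\hat{p}_n-p}_{j,\max} = O(h^2) +
O_{\P}\left(\sqrt{\frac{\log n}{nh^{d+2j}}}\right),
$$
for $j=0,1$ (this is exactly Theorem~\ref{thm::KDE} referenced after
Theorem~\ref{thm::mode}). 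Taking the maximum over $j\in\{0,1\}$, the
dominant variance term is the one with the largest power of $h$ in the
denominator, namely $j=1$, giving
$$
\norm{\hat{p}_n-p}^*_{1,\max} = O(h^2) +
O_{\P}\left(\sqrt{\frac{\log n}{nh^{d+2}}}\right).
$$

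Combining this bound with the conclusion of Theorem~\ref{thm::MSS} yields
the claimed rate immediately. The argument is short precisely because the
heavy lifting---the stability of the signature parameters under
perturbation of the function---is already packaged in
Theorem~\ref{thm::MSS}, and the KDE convergence rate is a known result
available under (K1--2). There is essentially no obstacle here beyond
verifying that the bandwidth condition $\frac{\log n}{nh^{d+6}}\to 0$ is
strong enough to guarantee convergence of the third derivatives (needed
so that $\hat{p}_n$ is Morse-Smale and the cell relabeling is
well-defined), which it is: the exponent $d+6 = d+2\cdot 3$ matches the
variance scaling $\sqrt{\log n / (nh^{d+2j})}$ at $j=3$, ensuring
$\norm{\hat{p}_n-p}^*_{3,\max}\to 0$ in probability so the smallness
hypothesis of Theorem~\ref{thm::MSS} holds with probability tending to
one. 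Hence the corollary follows, and I would note that the proof is
merely the composition of Theorem~\ref{thm::MSS} with the KDE rate, so it
can be stated in a few lines.
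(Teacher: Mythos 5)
Your proposal is correct and is exactly the argument the paper intends: the authors state that the corollary follows by combining Theorem~\ref{thm::MSS} (applied with $f=p$, $\tilde f=\hat p_n$) with the KDE derivative rate of Theorem~\ref{thm::KDE}, and they omit the details for that reason. Your additional verification that the bandwidth condition $\frac{\log n}{nh^{d+6}}\to 0$ guarantees $\norm{\hat p_n - p}^*_{3,\max}\to 0$ in probability (so that the smallness hypothesis and cell relabeling are valid) is a sensible filling-in of what the paper leaves implicit.
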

The proof to Corollary~\ref{thm::density} is 
a simple application of Theorem~\ref{thm::MSS} with the rate 
of convergence for the first derivative of the KDE (Theorem~\ref{thm::KDE}).
So we omit the proof.
%Corollary~\ref{thm::density} implies that
%the Morse-Smale signatures of a density estimator $\hat{p}_n$
%converges to the signatures of the population density function.
%The rate is a bit slower than the usual rate for estimating a density function
%since estimating the boundaries depends on the 
%derivatives so the rate is the same as the one for estimating the derivatives.
The optimal rate in Corollary~\ref{thm::density} is $O_\P\left(\left(\frac{\log n}{n}\right)^{\frac{2}{d+6}}\right)$
when we choose $h$ to be of order $O\left(\left(\frac{\log n}{n}\right)^{\frac{1}{d+6}}\right)$.

\begin{remark}
When we compute the Morse-Smale approximation function,
we may have some numerical problem in low-density regions
because the density estimate $\hat{p}_n$ may have unbounded support.
In this case, some cells may be unbounded, and the majority of these cells may
have extremely low density value, which makes the approximation function $0$.
Thus, in practice, we will restrict ourselves only to the regions whose
density is above a pre-defined threshold $\lambda$ so that every cell is bounded.
A simple data-driven threshold is $\lambda = 0.05 \sup_{x} \hat{p}_n(x)$.
Note that Theorem~\ref{thm::density} still works in this case 
but with a slight modification:
the cells are define on the regions $\{x: p_h(x)\geq 0.05\times \sup_x p_h(x)\}$.
\end{remark}

\begin{remark}
Note that for a density function, local minima may not exist or
the gradient flow may not lead us to a local minimum in some regions.
For instance, for a Gaussian distribution, there is no local minimum
and except for the center of the Gaussian, if we follow the gradient descent path,
we will move to infinity.
Thus, in this case we only consider the boundaries
of ascending $0$-manifolds corresponding to well-defined local minima
and assumptions (A) is only for the boundaries corresponding 
to these ascending manifolds.
\end{remark}

\begin{remark}
When we apply the Morse-Smale complex to nonparametric density estimation or regression,
we need to choose the tuning parameter.
For instance, in the MSR, we may use kernel regression or local polynomial regression
so we need to choose the smoothing bandwidth.
For the density estimation problem or mode clustering,
we need to choose the smoothing bandwidth for the kernel smoother.
In the case of regression, because we have the response variable,
we would recommend to choose the tuning parameter by cross-validation.
For the kernel density estimator (and mode clustering), 
because the optimal rate depends on the gradient estimation,
we recommend choosing the smoothing bandwidth using the normal reference
rule for gradient estimation or the cross-validation method for gradient estimation \citep{duong2007ks,Chacon2011}. 
\end{remark}

\section{Discussion}

In this paper, we introduced the Morse-Smale complex
and the summary signatures 
for nonparametric inference.
We demonstrated that the Morse-Smale complex can be applied to
various statistical problems such as
clustering, regression and two sample comparisons.
We showed that a smooth multivariate function
can be summarized by a few parameters associated with 
a bipartite graph, representing the local modes, minima and the complex
for the underlying function.
Moreover, we proved
a fundamental theorem about the stability of the Morse-Smale complex.
Based on the stability theorem, we 
derived consistency for mode clustering
and regression.

The Morse-Smale complex provides a method to synthesize 
both parametric and nonparametric inference.
Compared to parametric inference, we have a
more flexible model to study the structure of the underlying distribution. 
Compared to nonparametric inference, the use of the Morse-Smale complex 
yields a visualizable representation
for the underlying multivariate structures.
This reveals that we may gain additional insights in data analysis
by using geometric features.

Although the Morse-Smale complex has many potential statistical applications,
we need to be careful when applying it to a data set whose dimension is large (say $d>10$). 
When the dimension is large, the curse of dimensionality kicks in and the nonparametric 
estimators (in both density estimation problems or regression analysis) are not accurate
so the errors of the estimated Morse-Smale complex can be huge. 

%And we can use the Morse-Smale complex
%to summarize the multivariate function generated from nonparametric inference.

%We showed that a smooth multivariate function
%can be summarized by a few parameters associated with 
%a bipartite graph, representing the local modes, minima and the complex
%for the underlying function.
%The use of geometric features provides a balance between 
%parametric and nonparametric inference.

%The Morse-Smale complex can be applied to clustering,
%density estimation, regression and two sample comparison.
%We showed that a smooth multivariate function
%can be summarized by a few parameters associated with 
%a bipartite graph, representing the local modes, minima and the complex
%for the underlying function.
%\ATTNC{I think we need something stronger here.}

Here we list some possible extensions for future research:
\begin{itemize}
\item \emph{Asymptotic distribution.}
We have proved the consistency (and the rate of convergence)
for estimating the complex but the limiting distribution is still unknown.
If we can derive the limiting distribution and show that some resampling method 
(e.g. the bootstrap \cite{Efron1979}) converges to the same distribution, we can construct
confidence sets for the complex.

\item \emph{Minimax theory.}
Despite the fact that we have derived the rate of convergence
for a plug-in estimator for the complex,
we did not prove its optimality.
We conjecture the minimax rate for estimating the complex
should be related to the rate for estimating the gradient
and the smoothness around complex \citep{audibert2007fast, singh2009adaptive}.

\end{itemize}

\section*{Acknowledgement}
We thank the referees and the Associate Editor for their very constructive comments and suggestions.

%%%%%%%%%%%%%%%%%%%%%%%%%%%%%%%%%%%%%%%%%%%%%%%%%%%%%%%
\appendix
\section{Appendix: Proofs}

First, we include a Theorem about the rate of convergence for 
the kernel density estimator. This Lemma will be used in deriving the convergence rates.
\begin{thm}
[Lemma 10 in \cite{chen2015asymptotic}; see also \cite{genovese2014nonparametric}]
Assume (K1--2) and that $\log n/n\leq h^d \leq b$ for some $0<b<1$.
Then we have 
\begin{align*}
\norm{\hat{p}_n-p}^*_{\ell, \max} &= O(h^2) + O_{\P}\left(\sqrt{\frac{\log n}{nh^{d+2\ell}}}\right)%\\
%\norm{\hat{p}_n-p_h}^*_{\ell, \max} &=  O_{\P}\left(\sqrt{\frac{\log n}{nh^{d+2\ell}}}\right)
\end{align*}
for $\ell=0,1,2$.
\label{thm::KDE}
\end{thm}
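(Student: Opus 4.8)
The plan is to reduce the claim to the classical bias--variance analysis of kernel estimators, carried out uniformly in $x$ and simultaneously for every derivative order $j\le\ell$. Fix a multi-index $\alpha$ with $|\alpha|=j$ and note that
\[
\partial^\alpha\hat{p}_n(x)=\frac{1}{nh^{d+j}}\sum_{i=1}^n K^{(\alpha)}\!\left(\frac{x-X_i}{h}\right),
\]
so differentiating the kernel $j$ times produces the extra $h^{-j}$ factors that govern the final rate. I would then split
\[
\partial^\alpha\hat{p}_n(x)-\partial^\alpha p(x)=\bigl(\partial^\alpha\hat{p}_n(x)-\E\,\partial^\alpha\hat{p}_n(x)\bigr)+\bigl(\E\,\partial^\alpha\hat{p}_n(x)-\partial^\alpha p(x)\bigr),
\]
treating the stochastic term and the bias term separately, each uniformly over $x$ and over the finitely many multi-indices with $|\alpha|\le\ell$.

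For the bias I would write $\E\,\partial^\alpha\hat{p}_n(x)=\int K(u)\,\partial^\alpha p(x-hu)\,du$ and Taylor-expand $\partial^\alpha p(x-hu)$ to second order around $x$. Using $\int K=1$, the symmetry of $K$ (so that the first-order term vanishes), the finiteness of $\int u^2 K^{(\alpha)}(u)\,du$ from \textbf{(K1)}, and the smoothness of $p$, the leading term reproduces $\partial^\alpha p(x)$ and the remainder is $O(h^2)$ uniformly in $x$. Hence $\sup_x|\E\,\partial^\alpha\hat{p}_n(x)-\partial^\alpha p(x)|=O(h^2)$ for every $|\alpha|\le\ell$.

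The stochastic term is the heart of the argument, because a pointwise variance bound is not enough: we need control of the supremum over all $x$. Here I would invoke the empirical-process machinery of \cite{Gine2002}. Assumption \textbf{(K2)} states precisely that the class $\mathcal{K}$ of rescaled kernel derivatives is VC-type (polynomial covering numbers), so a Talagrand concentration inequality applied to the suprema of the centered empirical process yields
\[
\sup_x\bigl|\partial^\alpha\hat{p}_n(x)-\E\,\partial^\alpha\hat{p}_n(x)\bigr|=O_{\P}\!\left(\sqrt{\frac{\log n}{nh^{d+2j}}}\right).
\]
The $h^{-(d+2j)}$ scaling arises because the $h^{-(d+j)}$ normalization of $K^{(\alpha)}$ contributes $h^{-2(d+j)}$ to the second moment while the effective support of width $h$ in each coordinate supplies a compensating factor $h^{d}$; the constraint $\log n/n\le h^{d}$ puts us in the regime where the $\log n$ entropy term, rather than a higher-moment term, dominates.

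Finally I would take the maximum over $j=0,\ldots,\ell$ and over the finitely many $\alpha$ of each order. The bias contributes $O(h^2)$ at every order, while the stochastic bound $\sqrt{\log n/(nh^{d+2j})}$ is increasing in $j$ (since $h<1$), so the order-$\ell$ term dominates, giving the combined rate $O(h^2)+O_{\P}(\sqrt{\log n/(nh^{d+2\ell})})$, which is exactly $\norm{\hat{p}_n-p}^*_{\ell,\max}$. The main obstacle is the uniform stochastic control: everything hinges on verifying that \textbf{(K2)} places $\mathcal{K}$ in the VC-type regime where the \cite{Gine2002} concentration bound applies, and this is precisely why \textbf{(K2)} is assumed.
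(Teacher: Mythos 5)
The paper offers no proof of this theorem at all: it is imported as Lemma~10 of \cite{chen2015asymptotic} (see also \cite{genovese2014nonparametric}), which is exactly why the statement carries that citation in its header. Your reconstruction---bias controlled by a second-order Taylor expansion exploiting the symmetry of $K$ and the moment conditions in \textbf{(K1)}, and the centered process controlled uniformly via the Gin\'e--Guillou/Talagrand concentration machinery that \textbf{(K2)} is designed to activate, with the maximum over derivative orders dominated by $j=\ell$---is precisely the standard argument underlying that cited lemma, so your approach coincides with the source the paper relies on.
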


To prove Theorem~\ref{thm::Haus},
we introduce the following useful Lemma for stability
of critical points.
\begin{lem}
[Lemma 16 of \cite{chazal2014robust}]
Let $p$ be a density with compact support $\K$ of $\R^d$. 
Assume $p$ is a Morse function with
finitely many, distinct, critical values with corresponding critical points 
$C = \{c_1,\cdots, c_k\}$. Also assume that $p$
is at least twice differentiable
on the interior of $\K$, continuous and differentiable with non vanishing gradient on the
boundary of $\K$. Then
there exists $\epsilon_0 > 0$ such that for all $0<\epsilon<\epsilon_0$ 
the following is true: for some positive
constant $c$, there exists $\eta\geq c\epsilon_0$ such that, for any density $q$ 
with support $\K$ satisfying
$\norm{p-q}^*_{2,\max}\leq \eta$,
we have 
\begin{itemize}
\item[1.] $q$ is a Morse function with exact $k$ critical points $c'_1,\cdots,c'_k$ and
\item[2.] after suitable relabeling the indices, $\max_{i=1,\cdots, k}\norm{c_i-c'_i}\leq \epsilon$.
\end{itemize}
\label{lem::critical}
\end{lem}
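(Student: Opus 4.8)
The plan is to locate the critical points of $q$ by a quantitative degree / inverse-function argument applied to the gradient map, localized near each $c_i$, combined with a compactness argument that excludes stray critical points elsewhere. Write $g_p=\nabla p$, $g_q=\nabla q$ and let $H_p=\nabla\nabla p$, $H_q=\nabla\nabla q$; the hypothesis $\norm{p-q}^*_{2,\max}\leq\eta$ gives $\norm{g_p-g_q}_\infty\leq\eta$ and $\norm{H_p-H_q}_\infty\leq\eta$. I would first record two lower bounds from the Morse hypothesis. Setting $2m=\min_i\min_j|\lambda_j(H_p(c_i))|>0$ and using that $H_p$ is Lipschitz (bounded third derivatives), there is a radius $r_0>0$ with the balls $B(c_i,r_0)$ disjoint and interior to $\K$ on which $\norm{H_p(x)^{-1}}\leq 1/m$; a Taylor expansion of $g_p$ about $c_i$ then yields $\norm{g_p(x)}\geq\tfrac m2\norm{x-c_i}$ on $B(c_i,r_0)$ after shrinking $r_0$. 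On the compact complement $\K\setminus\bigcup_i B(c_i,r_0)$ the gradient $g_p$ never vanishes — here the non-vanishing-gradient assumption on $\partial\K$ is used — so $\norm{g_p}\geq\beta$ there for some $\beta>0$.

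Next, fix $\epsilon\in(0,r_0)$ and set $\eta=c\epsilon$ with $c=m/4$, shrinking $\epsilon_0$ so that $\eta<\min(\beta,m/2)$. On $\partial B(c_i,\epsilon)$ one has $\norm{g_p}\geq\tfrac m2\epsilon>\eta$, and on $\K\setminus\bigcup_i B(c_i,\epsilon)$ one has $\norm{g_p}\geq\min(\beta,\tfrac m2\epsilon)>\eta$, so $g_q$ is nonvanishing outside the balls and $q$ has no critical point there. Inside each ball the straight-line homotopy $(1-t)g_p+tg_q$ is nonvanishing on the boundary sphere, so the Brouwer degree is preserved and $\deg(g_q,B(c_i,\epsilon),0)=\sign\detm H_p(c_i)\neq 0$, producing a zero $c_i'$ of $g_q$ in $B(c_i,\epsilon)$, which is conclusion 2. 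Uniqueness inside the ball follows because $\norm{H_q-H_p}_\infty\leq\eta<m/2$ keeps $H_q$ uniformly invertible on $B(c_i,r_0)$, so the identity $g_q(x)-g_q(y)=\big(\int_0^1 H_q(y+s(x-y))\,ds\big)(x-y)$ forces injectivity; hence $q$ has exactly $k$ critical points, one per ball.

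For conclusion 1, at each $c_i'$ I would combine $\norm{H_q-H_p}_\infty\leq\eta$ with the Lipschitz bound $\norm{H_p(c_i')-H_p(c_i)}\leq L\epsilon$ to get $\norm{H_q(c_i')-H_p(c_i)}=O(\epsilon)$; by Weyl's inequality the eigenvalues of $H_q(c_i')$ lie within $O(\epsilon)$ of those of $H_p(c_i)$ and hence stay bounded away from $0$ for small $\epsilon$, so $H_q(c_i')$ is nondegenerate and $q$ is a Morse function. Distinctness of the critical values of $q$ then follows from $\norm{p-q}_\infty\leq\eta$ and $\norm{c_i-c_i'}\leq\epsilon$, since each $q(c_i')$ is $O(\epsilon)$-close to the distinct values $p(c_i)$.

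The main obstacle is the counting/uniqueness step rather than existence: the degree argument supplies a zero in each ball cheaply, but concluding that $q$ has \emph{exactly} $k$ critical points requires simultaneously ruling out extra zeros inside each ball — which is precisely where the uniform invertibility of $H_q$, inherited from the Morse nondegeneracy of $p$, is indispensable — and ruling out zeros drifting toward $\partial\K$, which the non-vanishing boundary-gradient hypothesis prevents. Arranging the thresholds so that a single linear relation $\eta\asymp\epsilon$ controls both the required closeness and the achieved displacement is the bookkeeping that yields the stated quantitative form.
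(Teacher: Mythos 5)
The paper never proves this statement: it is imported verbatim as Lemma 16 of \cite{chazal2014robust} (with a pointer to Theorem 1 of \cite{chen2016comprehensive} for a similar result), so your proposal must be judged as a free-standing reconstruction rather than against an internal proof. As such it is essentially correct, and the architecture is the standard one for this kind of stability statement: a quantitative lower bound $\norm{g_p(x)}\gtrsim m\norm{x-c_i}$ near each critical point from nondegeneracy of $H_p(c_i)$, a compactness bound $\norm{g_p}\geq\beta$ off the balls (where the non-vanishing boundary gradient enters, and which also forces the $c_i$ into the interior so the balls $B(c_i,r_0)$ make sense), existence of a zero of $g_q$ in each $B(c_i,\epsilon)$ via homotopy invariance of Brouwer degree (admissible because $\norm{(1-t)g_p+tg_q}\geq \frac{m}{2}\epsilon-\eta>0$ on the spheres), uniqueness via an integral mean-value identity, and nondegeneracy of $H_q(c_i')$ via Weyl's inequality. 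The degree-theoretic existence step is a classical alternative to the quantitative inverse-function-theorem argument used in the cited literature; both yield the same linear relation $\eta\asymp\epsilon$. Note that you silently correct what is surely a typo in the statement: $\eta\geq c\epsilon_0$ cannot be meant literally (a tolerance bounded below uniformly in $\epsilon$ cannot force conclusion 2 as $\epsilon\to 0$); your choice $\eta=\frac{m}{4}\epsilon$ matches the sensible reading $\eta\geq c\epsilon$. Your final paragraph on distinctness of the critical values of $q$ proves more than the lemma asserts, but the estimate $|q(c_i')-p(c_i)|\leq\eta+\mathrm{Lip}(p)\,\epsilon$ is fine.

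Two small repairs are needed, neither fatal. First, you invoke a Lipschitz Hessian (``bounded third derivatives''), which is stronger than the lemma's hypothesis that $p$ is at least twice differentiable; uniform continuity of $H_p$ on the compact $\K$ suffices, replacing the bound $L\epsilon$ by a modulus of continuity $\omega(\epsilon)$ throughout. Second, and more substantively, in the uniqueness step pointwise invertibility of $H_q$ on $B(c_i,r_0)$ does \emph{not} by itself make the averaged matrix $\int_0^1 H_q\bigl(y+s(x-y)\bigr)\,ds$ invertible --- averages of invertible matrices can be singular. What you need, and what your own setup already supplies, is that every $H_q(z)$ with $z\in B(c_i,r_0)$ lies within operator-norm distance $m$ of the \emph{single} matrix $H_p(c_i)$ (choose $r_0$ so that $\norm{H_p(z)-H_p(c_i)}\leq m/2$ and $\eta$ so that $\norm{H_q(z)-H_p(z)}\leq m/2$, minding the dimensional constant between the elementwise and operator norms); then the average is also within $m$ of $H_p(c_i)$, whose smallest singular value is at least $2m$, so the average has smallest singular value at least $m$ and injectivity of $g_q$ on the ball follows. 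With that rephrasing the counting argument closes and the proof is complete.
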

Note that similar result appears in Theorem 1 of \cite{chen2016comprehensive}.
This lemma shows that two close Morse functions $p,q$ will have similar
critical points.

%Basically, this lemma shows that when for a Morse function $p$
%defined on a compact set $\K$,
%when another smooth function $q$ that is sufficiently close to $p$,
%$q$ is also a Morse function and the critical points of $p$ and
%the critical points of $q$ are very close to each other.

\begin{figure}
\center
\includegraphics[scale=0.4]{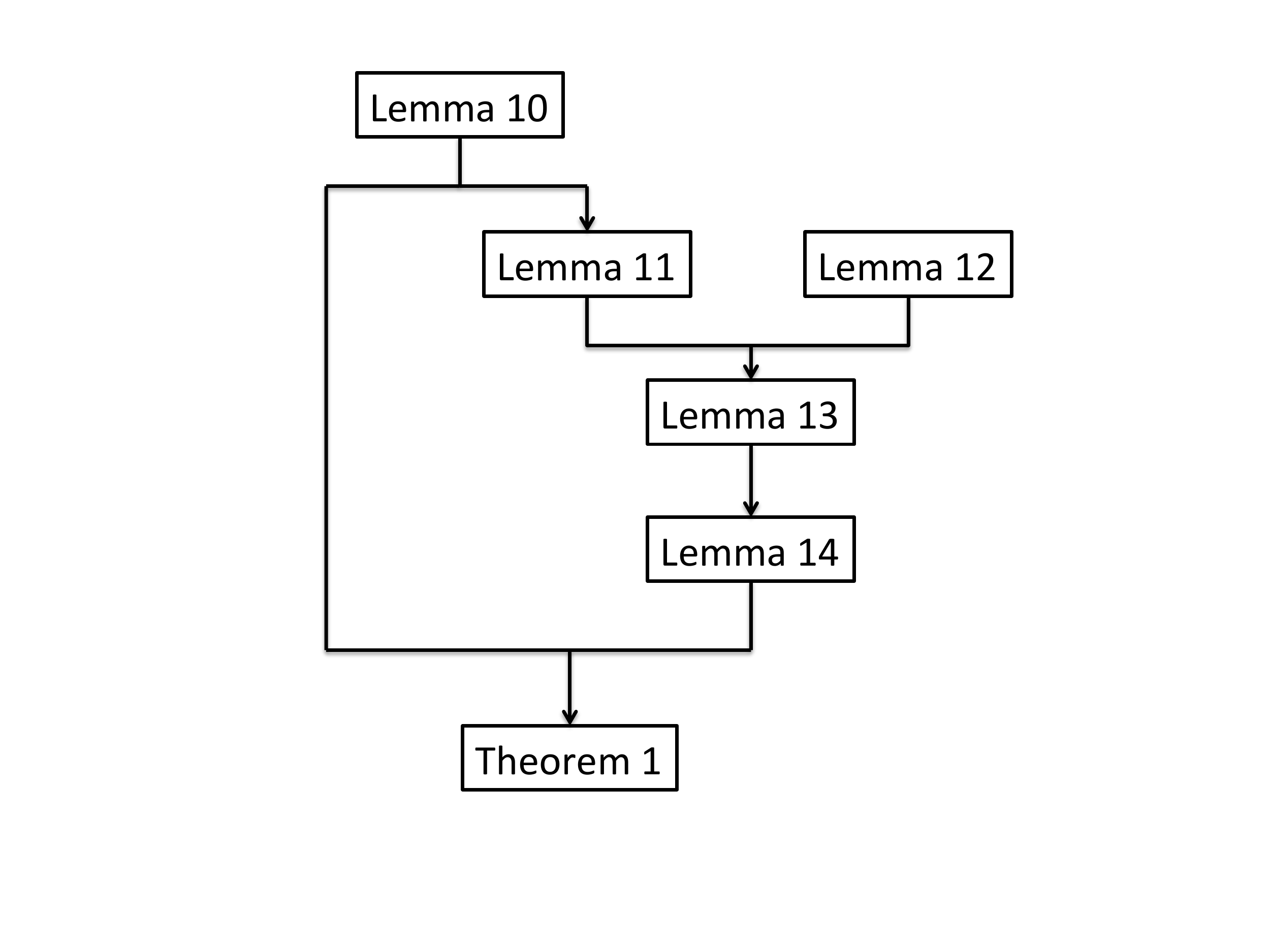}
\caption{Diagram for lemmas and Theorem \ref{thm::Haus}.
}
\label{Fig::diagram}
\end{figure}

The proof of Theorem~\ref{thm::Haus} requires several working lemmas.
We provide a chart for how we are going to prove Theorem~\ref{thm::Haus}.

%To proof this Theorem~\ref{thm::Haus}, we need several working lemmas.
First, we define some notations about gradient flows.
%The above result can be used to bound the minimal gradient for points near to boundaries $D$,
%which will be used in Larry's theorems.
Recall that $\pi_x(t)\in\mathbb{K}$ is the gradient (ascent) flow starting at $x$:
$$
\pi_x(0) = x, \quad \pi_x'(t)= g(\pi_x(t)).
$$
For $x$ that is not on the boundary set $D$, we define the time:
$$
t_\epsilon(x) = \inf \{t: \pi_x(s)\in B(m, \sqrt{\epsilon}),\mbox{ for all} s\geq t\},
$$
where $m$ is the destination of $\pi_x$. %i.e. $m=\lim_{t\rightarrow\infty}\pi_x(t)$, which 
%is a local mode (we assume $x$ is not on $D$, the boundaries).
That is, $t_\epsilon(x)$ is the time to arrive the regions around a local mode.

First, we prove a property for the direction of the gradient field around boundaries.
\begin{lem}[Gradient field and boundaries]
%Assumption condition {\bf (D)}.
Assume the notations in Theorem~\ref{thm::Haus}
and assume $f$ is a Morse function with bounded third derivatives and satisfies 
assumption {\bf (D)}.
Let $s(x) = x-\Pi_x$, where $\Pi_x\in B$
is the projected point from $x$ onto $B$
(when $\Pi_x$ is not unique, just pick any projected point).
For any $q\in B$, let $x$ be a point near $q$ such that 
$x-q\in \mathbb{V}(q)$, the normal space of $B$ at $q$.
Let $\delta(x) = \norm{x-q}$ and
$e(x) = \frac{x-q}{\|x-q\|}$ denote the unit vector.
Then
\begin{enumerate}
\item For every point $x$ such that 
$$
d(x,B)\leq\delta_1 = \frac{2H_{\min}}{d^2\cdot\norm{f}_{3,\max}},
$$
we have
$$
g(x)^T s(x)\geq 0.
$$
That is, the gradient is pushing $x$ away from the boundaries.
\item When $\delta(x)\leq \frac{H_{\min}}{d^2\cdot \norm{f}_{3,\max}}$,
$$
\ell(x) = e(x)^T g(x) \geq \frac{1}{2}H_{\min} \delta(x).$$
\end{enumerate}

\label{lem::Gdist}
\end{lem}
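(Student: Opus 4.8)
The plan is to derive both parts from one second-order Taylor expansion of the gradient field at a boundary point, using two ingredients: the gradient is tangent to whichever stratum of $B$ it lies on, and assumption \textbf{(D)} forces the Hessian to be uniformly positive in the directions normal to $B$. Fix $q\in B$, say $q\in D_j$, and a unit normal vector $e\in\mathbb{V}(q)$; write $x=q+\delta e$ with $\delta=\delta(x)\ge 0$. Taylor expanding $g=\nabla f$ about $q$ gives $g(x)=g(q)+\delta H(q)e+R$, where $R$ is quadratic in $\delta$ with entries controlled by the third derivatives of $f$.

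The geometric input is that $D_j$ is invariant under the gradient flow (by construction it is a union of integral curves converging to one critical point), so $g(q)$ lies in the tangent space $T_qD_j$, which is orthogonal to the normal space $\mathbb{V}(q)$; hence $e^Tg(q)=0$. Taking the columns of $V(q)$ orthonormal, I can write $e=V(q)w$ with $\norm{w}=1$, so $e^TH(q)e=w^TH_V(q)w\ge\lambda_{\min}(H_V(q))\ge H_{\min}$ by \textbf{(D)}. Thus the linear term of $e^Tg(x)$ is at least $H_{\min}\delta$.

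For the remainder, writing $u=x-q$, each entry is $R_i=\tfrac12\sum_{a,b}\partial_{iab}f(\xi)\,u_au_b$ for some $\xi\in[q,x]$; bounding $|\partial_{iab}f|\le\norm{f}_{3,\max}$ and $\norm{u}_1\le\sqrt d\,\delta$ and counting the $d$ factors crudely gives $|e^TR|\le\tfrac{d^2}{2}\norm{f}_{3,\max}\,\delta^2$. Hence $e^Tg(x)\ge\delta\bigl(H_{\min}-\tfrac{d^2}{2}\norm{f}_{3,\max}\,\delta\bigr)$. For part (2), the hypothesis $\delta\le H_{\min}/(d^2\norm{f}_{3,\max})$ makes the bracketed factor at least $\tfrac12 H_{\min}$, so $\ell(x)=e^Tg(x)\ge\tfrac12 H_{\min}\delta$. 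For part (1), set $q=\Pi_x$; the nearest-point projection property gives $s(x)=x-q\in\mathbb{V}(q)$ with $\delta=d(x,B)$, so $g(x)^Ts(x)=\delta\,e^Tg(x)\ge\delta^2\bigl(H_{\min}-\tfrac{d^2}{2}\norm{f}_{3,\max}\,\delta\bigr)$, which is nonnegative exactly when $\delta\le\delta_1=2H_{\min}/(d^2\norm{f}_{3,\max})$ (the case $\delta=0$ being trivial).

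The main obstacle is not the computation but justifying its geometric hypotheses uniformly over $B$: namely that $g(q)\perp\mathbb{V}(q)$ holds on every stratum $D_j$ making up $B$ (including its lower-dimensional intersections), and that for $x$ within $\delta_1$ of $B$ the metric projection $\Pi_x$ is single-valued with $x-\Pi_x$ normal to $B$. Both follow from the smooth-manifold structure of the $D_{j,k}$ and the positive reach of $B$, while the uniformity of the constants is guaranteed because $H_{\min}$ is a minimum and $\norm{f}_{3,\max}$ a supremum over all of $B$.
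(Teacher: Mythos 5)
Your proposal is correct and follows essentially the same route as the paper's own proof: a Taylor expansion of $g$ about the boundary point, the tangency fact $e(x)^Tg(q)=0$, the lower bound $e^TH(q)e\geq H_{\min}$ from assumption \textbf{(D)}, and the remainder bound $\tfrac{d^2}{2}\norm{f}_{3,\max}\delta^2$, yielding the identical thresholds for both claims. Your added justifications (flow-invariance of the strata giving $g(q)\in T_qD_j$, and the projection property giving $s(x)\in\mathbb{V}(\Pi_x)$) are facts the paper simply asserts, so they refine rather than change the argument.
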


\begin{figure}
\center
\subfigure[Lemma~\ref{lem::Gdist}]{
	\includegraphics[scale=0.7]{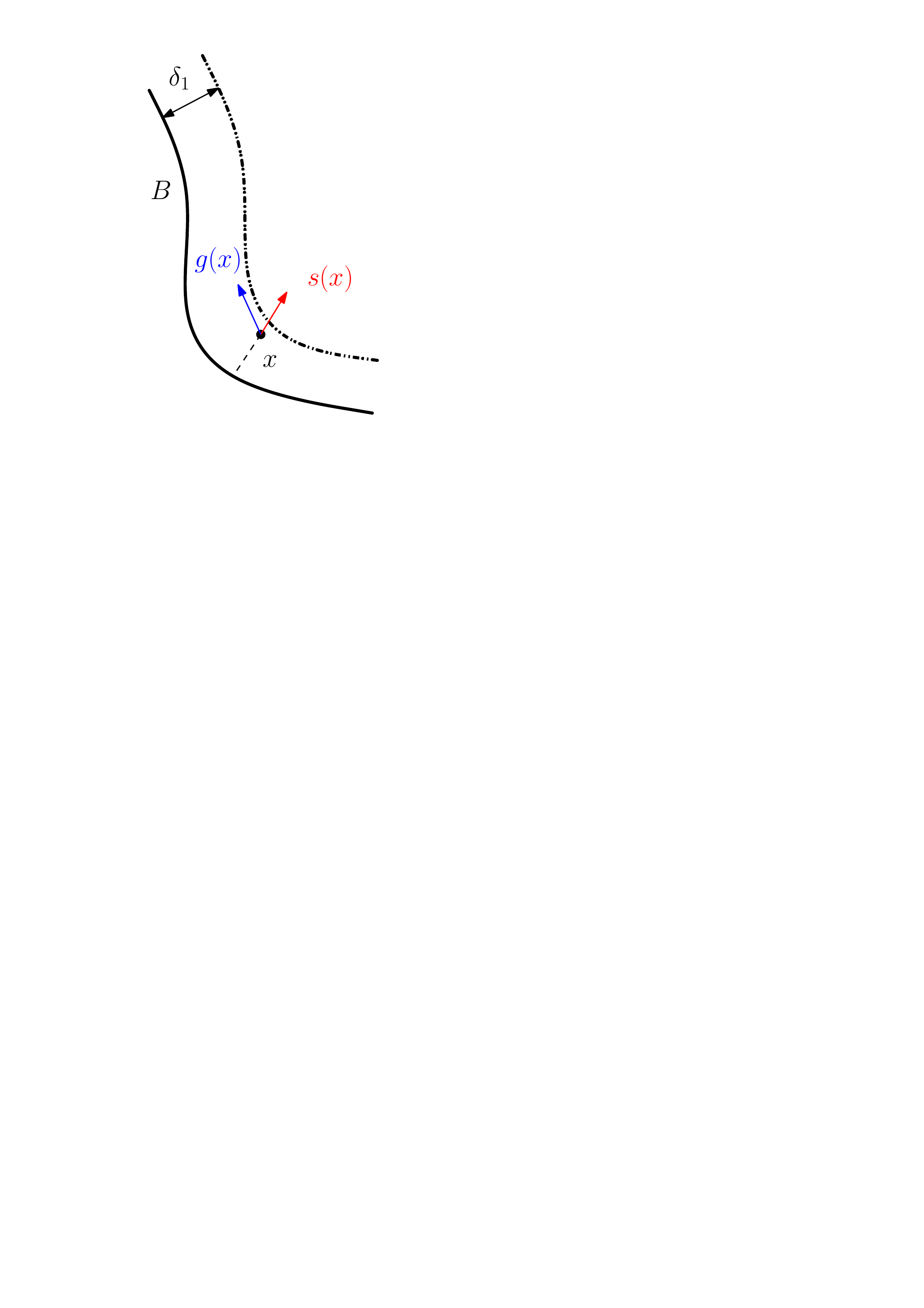}
}
\subfigure[Lemma~\ref{lem::dist}]{
	\includegraphics[scale=0.7]{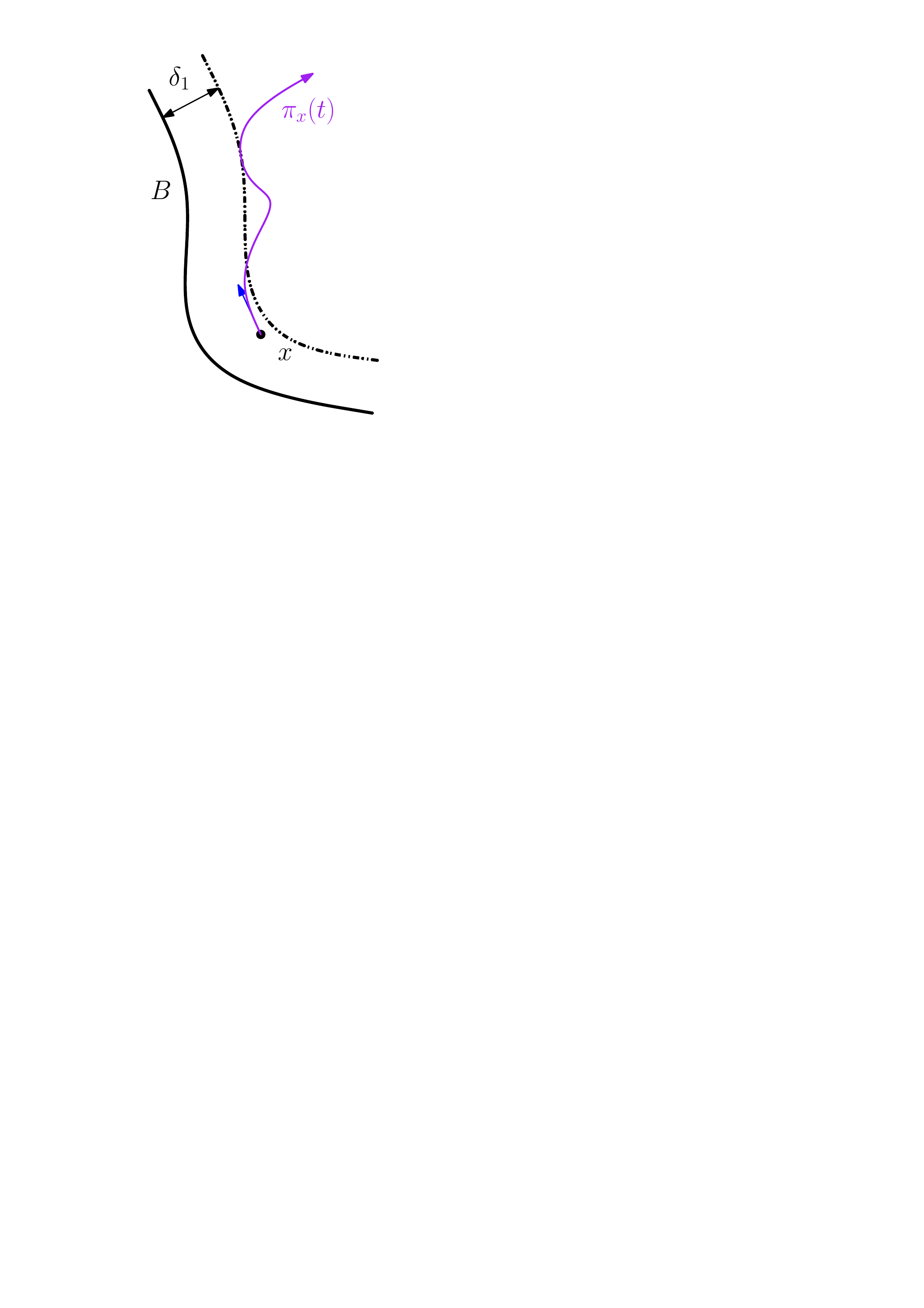}
}
\caption{Illustration for Lemma \ref{lem::Gdist} and \ref{lem::dist}.
(a): We show that the angle between projection vector $s(x)$ 
and the gradient $g(x)$ is always right whenever $x$
is closed to the boundaries $B$. 
(b): According to (a), any gradient flow line start from a point $x$
that is close to the boundaries (distance $<\delta_1$), this flow line is always moving away
from the boundaries when the current location is close to the boundaries.
The flow line can temporally get closer to the boundaries when it is away
from boundaries (distance $>\delta_1$)}
\label{Fig::ex_MG}
\end{figure}

\begin{proof}

%We define the vector $s(x) = x-\Pi_x$, the projection vector pointing `outward'
%from the boundaries.
{\bf Claim 1.}
Because the projection of $x$ onto $B$ is $\Pi_x$, $s(x) \in \mathbb{V}(\Pi_x)$
and $s(x)^Tg(\Pi_x)=0$ (recall that for $p\in B$, $\mathbb{V}(p)$ is the collection of normal
vectors of $B$ at $p$). 

Recall that $d(x,B) = \norm{s(x)}$ is the projected distance.
By the fact that $s(x)^Tg(\Pi_x)=0$,
\begin{equation}
\begin{aligned}
s(x)^T g(x) &= s(x)^T (g(x)-g(\Pi_x))\\
&\geq s(x)^T H(\Pi_x) s(x) - \frac{d^2}{2}\norm{f}_{3,\max}d(x,B)^3 \quad \mbox{(Taylor's theorem)}\\
& = d(x,B)^2 \frac{s(x)^T}{d(x,B)}H(\Pi_x)\frac{s(x)}{d(x,B)} - \frac{d^2}{2}\norm{f}_{3,\max}d(x,B)^3\\
&\geq d(x,B)^2 \left(H_{\min}- \frac{d^2}{2}\norm{f}_{3,\max}d(x,B)\right).
\end{aligned}
\end{equation}
Note that we use the vector-value Taylor's theorem in the first inequality and
the fact that for two close points $x,y$, the difference in the $j$-the element of gradient $g_j(x)-g_j(y)$ 
has the following expansion
\begin{align*}
g_j(x)-g_j(y) &= H_j(y)^T(x-y) + \int_{u=y}^x (u-y) {\sf T}_j(u) du\\
&\geq H_j(y)^T(x-y) - \frac{1}{2}\sup_u\|{\sf T}_j(u)\|_2 \|x-y\|^2\\
&\geq H_j(y)^T(x-y) - \frac{d^2}{2} \norm{f}_{3,\max} \|x-y\|^2,
\end{align*}
where $H_j(y) = \nabla g_j(y)$ 
and ${\sf T}_j(y) = \nabla \nabla g_j(y)$ is the Hessian matrix of $g_j(y)$, whose elements are the third derivatives of $f(y)$.

Thus, when $d(x,B)\leq  \frac{2H_{\min}}{d^2\cdot\norm{f}_{3,\max}}$,
$s(x)^T g(x) \geq0$, which proves the first claim.
%We have completed our proof.
%That is, the gradient at $x$ will not move closer toward the boundaries
%whenever $x$ is close to the boundaries.

{\bf Claim 2.}
By definition, $e(x)^T g(q)=0$ because $g(q)$ is in tangent space of $B$ at $q$ and $e(x)$
is in the normal space of $B$ at $q$.
Thus, 
\begin{equation}
\begin{aligned}
\ell(x) &= e(x)^Tg(x)\\
&= e(x)^T \left(g(x) - g(q)\right)\\
& \geq e(x)^TH(q)(x-q) - \frac{d^2}{2}\norm{f}_{3,\max}\norm{x-q}^2 \\
& = e(x)^TH(\pi(x))e(x)\delta(x) - \frac{d^2}{2}\norm{f}_{3,\max}\delta(x)^2 \\
&\geq \frac{1}{2}H_{\min} \delta(x)
\end{aligned}
\label{eq::LG1}
\end{equation}
whenever $\delta(x) = \norm{x-q}\leq  \frac{H_{\min}}{d^2\cdot\norm{f}_{3,\max}}$.
Note that in the first inequality we use the same lower bound as the one in claim 1.
Also note that $x-q = e(x)\delta(x)$ and $e(x)$ is in the normal space of $B$ at $\pi(x)$
so the third inequality follows from assumption{ \bf(D)}.

\end{proof}

Lemma~\ref{lem::Gdist} can be used to prove the following result.
\begin{lem}[Distance between flows and boundaries]
Assume the notations as the above
and
assumption {\bf (D)}.
Then for all $x$ such that $0<d(x, B)=\delta\leq \delta_1 = \frac{2H_{\min}}{d^2\norm{f}_{3,\max}}$,
$$
d(\pi_x(t), B) \geq \delta,
$$
for all $t\geq0$.
\label{lem::dist}
\end{lem}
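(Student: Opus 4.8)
The plan is to reduce everything to the scalar quantity $D(t) = d(\pi_x(t), B)$ and to show it can never drop below its initial value $D(0) = \delta$. The driving mechanism is exactly claim~1 of Lemma~\ref{lem::Gdist}: whenever the flow lies within distance $\delta_1$ of $B$, the gradient $g(\pi_x(t))$ has a nonnegative component along the outward direction $s(\pi_x(t)) = \pi_x(t) - \Pi_{\pi_x(t)}$, so the flow is (weakly) pushed away from the boundary. Heuristically $D$ is therefore nondecreasing as long as $D \le \delta_1$, and since the initial distance satisfies $\delta \le \delta_1$, the trajectory stays trapped outside the sphere of radius $\delta$.

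First I would make the monotonicity precise in a way that tolerates the non-smoothness of $d(\cdot,B)$. Because the nearest-point projection onto $B$ need not be unique, $D$ is only Lipschitz, and one cannot differentiate $\tfrac12 D^2$ pointwise. Instead I estimate a one-sided derivative against a frozen nearest point. Fix $t$ with $0 < D(t) \le \delta_1$, let $q \in B$ realize $D(t) = \|\pi_x(t) - q\|$, so that $s(\pi_x(t)) = \pi_x(t) - q$, and combine the bound $D(t-h) \le \|\pi_x(t-h) - q\|$ (valid since $q \in B$) with the expansion $\pi_x(t-h) = \pi_x(t) - h\,g(\pi_x(t)) + o(h)$. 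Expanding $\|\pi_x(t-h) - q\|$ then yields
\begin{equation}
\liminf_{h \to 0^+}\frac{D(t) - D(t-h)}{h} \;\ge\; \frac{s(\pi_x(t))^T g(\pi_x(t))}{D(t)} \;\ge\; 0 ,
\end{equation}
where the final inequality is precisely Lemma~\ref{lem::Gdist}, which holds for any chosen nearest point $q$. Thus the lower-left Dini derivative of $D$ is nonnegative at every $t$ with $0 < D(t) \le \delta_1$.

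Next I would run a first-exit argument. Suppose toward a contradiction that $D(t_0) < \delta$ for some $t_0 > 0$, and set $t_1 = \sup\{t \le t_0 : D(t) \ge \delta\}$; by continuity $D(t_1) = \delta$ and $D(t) < \delta \le \delta_1$ on $(t_1, t_0]$. Being Lipschitz in $t$ (as $d(\cdot,B)$ is $1$-Lipschitz and the flow is Lipschitz on compact time intervals), $D$ is absolutely continuous, so it is differentiable at almost every $t \in (t_1, t_0)$; at such $t$ with $D(t) > 0$ the derivative equals the lower-left Dini derivative and is therefore $\ge 0$, while at a.e.\ $t$ with $D(t) = 0$ one has $D'(t) = 0$. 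Hence $D' \ge 0$ a.e.\ on $(t_1, t_0)$ and $D(t_0) - D(t_1) = \int_{t_1}^{t_0} D'(t)\,dt \ge 0$, giving $D(t_0) \ge \delta$, a contradiction. Therefore $D(t) \ge \delta$ for all $t \ge 0$.

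The main obstacle is the second step: the distance function to $B$ is genuinely non-differentiable along the medial axis of $B$, and $B$ is itself only a stratified union of manifolds meeting at the lower-order critical points, so the naive identity $\tfrac{d}{dt}\tfrac12 D^2 = s^T g$ is not licensed. The remedy is to work with the one-sided difference quotient against a fixed nearest point, which needs nothing beyond $q \in B$ and the inequality $g^T s \ge 0$ from Lemma~\ref{lem::Gdist}; this sidesteps both the non-uniqueness of the projection and the possibility that the flow momentarily grazes a lower-dimensional stratum of $B$.
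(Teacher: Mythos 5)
Your proposal is correct and follows the same route as the paper's own proof: the whole argument rests on claim~1 of Lemma~\ref{lem::Gdist} ($g^T s \ge 0$ within the tube $B\oplus\delta_1$), which forces $d(\pi_x(t),B)$ to be nondecreasing while the flow is near $B$, so the minimum distance occurs at $t=0$. The difference is only one of rigor: the paper argues this verbally ("the gradient is pushing $\pi_x(t)$ away from $B$, so the flow is closest to $B$ at the beginning"), whereas you supply the analytic machinery — the backward difference quotient against a frozen nearest point, Lipschitz/absolute continuity of $t\mapsto d(\pi_x(t),B)$, and a first-exit argument — that makes the monotonicity claim precise despite the non-smoothness of the distance function.
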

The main idea is that the projected gradient (gradient projected to the normal space
of nearby boundaries) is always positive.
This means that the flow cannot move ``closer'' to the boundaries.

\begin{proof}
By Lemma~\ref{lem::Gdist}, for every point $x$ near to the boundaries ($d(x,B)<\delta_1$),
the gradient is moving this point away from the boundaries.
Thus, for any flow $\pi_x(t)$, once it touches the region
$B\oplus \delta_1,$
it will move away from this region.
So when a flow leaves $B\oplus \delta_1$, it can never come back.

Therefore, the only case that a flow can be within the region $B\oplus \delta_1$
is that it starts at some $x\in B\oplus \delta_1$. 
i.e. $d(x,B)<\delta_1$.

Now consider a flow start at $x$ such that $0<d(x,B)\leq \delta_1$.
By Lemma \ref{lem::Gdist}, the gradient $g(x)$ leads $x$ to move away
from the boundaries $B$.
Thus, whenever $\pi_x(t)\in B\oplus \delta_1$,
the gradient is pushing $\pi_x(t)$ away from $B$.
As a result, the time that $\pi_x(t)$ is closest to $B$ is at the beginning of the flow
.i.e. $t=0$.
This implies that $d(\pi_x(t),B) \geq d(\pi_x(0),B)=d(x,B) = \delta$.

\end{proof}

With Lemma~\ref{lem::dist}, we are able to bound the low gradient regions
since the flow cannot move infinitely close to critical points except its destination.
Let $\lambda_{\min}>0$ be the minimal `absolute' value of eigenvalues
of all critical points.

\begin{lem}[Bounds on low gradient regions]
Assume the density function $f$ is a Morse function and has bounded third derivatives.
Let $\mathcal{C}$ denote the collection of all critical points
and let $\lambda_{\min}$ is the minimal `absolute' eigenvalue for
Hessian matrix $H(x)$ evaluated at $x\in\mathcal{C}$.
Then there exists a constant $\delta_2>0$ such that
\begin{equation}
G(\delta) \equiv \left\{x: \norm{g(x)}\leq \frac{\lambda_{\min}}{2}\delta \right\} \subset \mathcal{C}\oplus \delta
\end{equation}
for every $\delta\leq\delta_2$.
\label{lem::eigen}
\end{lem}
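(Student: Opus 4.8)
The plan is to argue by contraposition: I will show that whenever $d(x,\mathcal{C})>\delta$ the gradient satisfies $\norm{g(x)}>\frac{\lambda_{\min}}{2}\delta$, which is exactly the statement $G(\delta)\subset \mathcal{C}\oplus\delta$. The argument splits $\K$ into a region close to the critical set, where a Taylor expansion forces $\norm{g}$ to grow linearly in the distance to the nearest critical point, and a region bounded away from $\mathcal{C}$, where compactness yields a uniform positive lower bound on $\norm{g}$.

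First I would treat a neighborhood of $\mathcal{C}$. Since $f$ is a Morse function on the compact set $\K$, it has finitely many critical points, so they are separated by a positive distance; I choose $r_0>0$ small enough that the balls $B(c,r_0)$, $c\in\mathcal{C}$, are pairwise disjoint and each has its center as the unique nearest critical point. For $x\in B(c,r_0)$, Taylor's theorem about $c$ together with $g(c)=0$ gives
\[
g(x)=H(c)(x-c)+R(x),\qquad \norm{R(x)}\leq C\,\norm{x-c}^2,
\]
exactly as in the expansion used in Lemma~\ref{lem::Gdist}, where $C$ depends only on $d$ and $\norm{f}_{3,\max}$. Because the smallest absolute eigenvalue of $H(c)$ is at least $\lambda_{\min}$, this yields $\norm{g(x)}\geq \lambda_{\min}\norm{x-c}-C\norm{x-c}^2$, so after shrinking $r_0$ to at most $\lambda_{\min}/(2C)$ I obtain the clean bound $\norm{g(x)}\geq \tfrac12\lambda_{\min}\norm{x-c}=\tfrac12\lambda_{\min}\,d(x,\mathcal{C})$ for all $x\in B(c,r_0)$.

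Next I would control the far region $\K\setminus\bigcup_{c}B(c,r_0)$. This set is compact, and since $f$ is a Morse function its gradient vanishes only on $\mathcal{C}$; hence the continuous map $x\mapsto\norm{g(x)}$ attains a strictly positive minimum $m_0>0$ there. Setting $\delta_2=\min\{r_0,\, m_0/\lambda_{\min}\}$, I assemble the two estimates. If $x\notin\bigcup_c B(c,r_0)$, then for every $\delta\leq\delta_2$ we have $\norm{g(x)}\geq m_0>\tfrac12\lambda_{\min}\delta$, so $x\notin G(\delta)$. If instead $x\in B(c,r_0)$, then $d(x,\mathcal{C})=\norm{x-c}$ and the near-critical bound gives $\norm{g(x)}\geq\tfrac12\lambda_{\min}\,d(x,\mathcal{C})$; thus $x\in G(\delta)$ forces $d(x,\mathcal{C})\leq\delta$. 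In both cases $G(\delta)\subset\mathcal{C}\oplus\delta$.

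The Taylor estimate and the compactness argument are routine; the point needing care is the bookkeeping of constants so that the two regimes match across the boundary of the balls. The main obstacle is guaranteeing that inside each $B(c,r_0)$ the center is genuinely the nearest critical point, which is what lets me replace $\norm{x-c}$ by $d(x,\mathcal{C})$ in the lower bound. This follows from the finiteness and separation of the critical set, but it must be stated explicitly, since without it the linear bound $\norm{g(x)}\geq\tfrac12\lambda_{\min}\norm{x-c}$ could not be rephrased in terms of distance to $\mathcal{C}$.
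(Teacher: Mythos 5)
Your proof is correct and follows essentially the same strategy as the paper's: split $\K$ into a neighborhood of the critical set, where nondegeneracy of the Hessian together with third-derivative control yields the linear lower bound $\norm{g(x)}\geq \tfrac{1}{2}\lambda_{\min}\, d(x,\mathcal{C})$, and a far region handled by a compactness/positive-minimum argument. The only cosmetic difference is that you bound the Taylor remainder directly, whereas the paper bounds $\|H(x)-H(c)\|_F$ via the third derivatives, invokes the Hoffman--Wielandt theorem, and then applies the integral form of the mean value theorem; the two estimates are interchangeable.
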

\begin{proof}
Because the support $\K$ is compact and $x\in \K \mapsto \|g(x)\|$ is continuous, 
for any $g_0>0$ sufficiently small, there exists a constant $R(g_0)>0$ such that
$$
G_1(g_0) \equiv \left\{x: \norm{g(x)}\leq g_0 \right\} \subset \mathcal{C}\oplus R(g_0)
$$
and when $g_0\rightarrow 0$, $R(g_0)\rightarrow0$.
Thus, there is a constant $g_1>0$ such that $R(g_1) = \frac{\lambda_{\min}}{2d^3\norm{f}_{3,\max}}$.

The set $\mathcal{C}\oplus \frac{\lambda_{\min}}{2\norm{f}_{3,\max}}$ has a useful feature:
for any $x\in \mathcal{C}\oplus \frac{\lambda_{\min}}{2\norm{f}_{3,\max}}$, 
\begin{align*}
\|H(x) - H(c)\|_{F} &= \|(x-c) f^{(3)}(c+t(x-c))dt\|_{F}\\
&\leq d^3\|x-c\| \norm{f}_{3,\max}\\
& \leq d^3\frac{\lambda_{\min}}{2d^3\norm{f}_{3,\max}} \cdot \norm{f}_{3,\max}\\
& = \frac{\lambda_{\min}}{2},
\end{align*}
where $f^{(3)}$ is a $d\times d\times d$ array of the third derivative of $f$ and $\|A\|_F$
is the Frobenius norm of the matrix A. 
By Hoffman--Wielandt theorem (see, e.g., page 165 of \citealt{Bhatia1997}),
the eigenvalues between $H(x)$ and $H(c)$ is bounded by $\|H(x) - H(c)\|_{F}$.
Therefore, 
the smallest eigenvalue of $H(x)$ must be greater than or equal to the smallest eigenvalue of $H(c)$ minus
$\frac{\lambda_{\min}}{2}$.
Because $\lambda_{\min}$ is the smallest absolute eigenvalues of $H(c)$ for all $c\in\cC$,
the smallest eigenvalue of $H(x)$ is greater than or equal to $\frac{\lambda_{\min}}{2}$, for all 
$x\in \mathcal{C}\oplus R(g_1) = \mathcal{C}\oplus \frac{\lambda_{\min}}{2d^3\norm{f}_{3,\max}}$.

%We first focus on the region $\mathcal{C}\oplus R(g_0)$.
%For any $x\in \mathcal{C}\oplus R(g_0)$, there exists $c\in\mathcal{C}$ such that $\|x-c\|$.
Using the above feature and the fact that $G_1(g_1) \subset \mathcal{C}\oplus \frac{\lambda_{\min}}{2d^3\norm{f}_{3,\max}}$,
for any $x\in G_1(g_1)$, 
%For any $x \in G_1(g_1)$, $x\in \mathcal{C}\oplus \frac{\lambda_{\min}}{2d^3\norm{f}_{3,\max}}$
%by definition. 
%Thus, $\|g(x)\|\leq g_1$ and there exists a $c\in\mathcal{C}$ such that $\|x-c\|\leq R(g_1) = \frac{2g_0}{\lambda_{\min}}$. 
we have the following inequalities:
\begin{align*}
g_1 &\geq \|g(x)\|\\
& = \left\|\int_0^1 (x-c)H(c+t(x-c))dt\right\|\\
& \geq \|x-c\| \frac{1}{2}\lambda_{\min}.
\end{align*}
Thus, $\|x-c\|\leq\frac{2 g_1}{\lambda_{\min}}$,
which implies
$$
G_1(g_1) \subset \mathcal{C}\oplus \frac{2 g_1}{\lambda_{\min}}.
$$
Moreover, because $G_1(g_2)\subset G_1(g_3) $ for any $g_2\leq g_3$,
any $g_2\leq g_1$ satisfies
$$
G_1(g_2)\subset \mathcal{C}\oplus \frac{2 g_2}{\lambda_{\min}}. 
$$
Now pick $\delta = \frac{2 g_2}{\lambda_{\min}} $, we conclude
$$
G_1\left(\frac{\lambda_{\min}}{2\delta}\right) =G(\delta) \subset \mathcal{C}\oplus \delta
$$
for all 
\begin{equation}
\delta = \frac{2 g_2}{\lambda_{\min}} \leq \frac{2 g_1}{\lambda_{\min}} = \delta_2,
\label{eq::delta2}
\end{equation}
where 
$g_1$ is the constant such that $R(g_1) = \frac{\lambda_{\min}}{2d^3\norm{f}_{3,\max}}$.

\end{proof}

\begin{lem}[Bounds on gradient flow]
Using the notations above
and assumption {\bf (D)},
let $\delta_1$ be defined in Lemma~\ref{lem::dist} and $\delta_2$ be defined in Lemma \ref{lem::eigen}, equation \eqref{eq::delta2}.
%and we further define
%\begin{equation}
%\delta_3 = \frac{\lambda_{\min}}{2\norm{f}_{3,\max}}.
%\end{equation}
Then for all $x$ such that
$$
d(x, B)=\delta<\delta_0 = \min\left\{\delta_1,\delta_2, \frac{H_{\min}}{d^2\cdot \norm{f}_{3,\max}}\right\},
$$
and picking $\epsilon$ such that $\delta_2>\epsilon^2> \delta$,
we have
$$
\eta_\epsilon(x)\equiv \underset{0\leq t\leq t_\epsilon(x)}{\inf}\norm{g(\pi_x(t))} \geq \delta 
\frac{\lambda_{\min}}{2}.
$$
Moreover,
$$
\gamma_\epsilon(\delta)\equiv  \underset{x\in B_\delta}{\inf}\eta_\epsilon(x) \geq \delta \frac{\lambda_{\min}}{2},
$$
where $B_\delta = \{x: d(x,B) = \delta\}$.
\label{lem::grad}
\end{lem}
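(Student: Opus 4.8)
The plan is to argue by contradiction, combining the two preceding lemmas: Lemma~\ref{lem::dist} keeps the whole flow uniformly away from the boundary set $B$, while Lemma~\ref{lem::eigen} converts a small gradient into proximity to the critical set $\mathcal{C}$. Fix $x$ with $d(x,B)=\delta<\delta_0$ and let $m=\dest(x)$ be its destination, which is a local maximum since $d(x,B)=\delta>0$ forces $x\in D_d$. Suppose the claimed bound fails, so there is a time $t^\ast\in[0,t_\epsilon(x)]$ with $\norm{g(\pi_x(t^\ast))}<\tfrac{1}{2}\lambda_{\min}\delta$. Since $\delta<\delta_0\le\delta_2$, Lemma~\ref{lem::eigen} gives $\pi_x(t^\ast)\in G(\delta)\subset\mathcal{C}\oplus\delta$, so $\pi_x(t^\ast)$ lies within distance $\delta$ of some critical point $c\in\mathcal{C}$.

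First I would identify $c$. Every critical point other than a local maximum sits on $B$: if $c\in C_j$ with $j\ge 1$, then the flow from $c$ is stationary, so $\dest(c)=c\in C_j$ and hence $c\in D_{d-j}\subset B$. On the other hand, since $\delta\le\delta_1$, Lemma~\ref{lem::dist} guarantees $d(\pi_x(t^\ast),B)\ge\delta$. Thus $c$ cannot be a saddle or a local minimum, for otherwise $\norm{\pi_x(t^\ast)-c}\ge d(\pi_x(t^\ast),B)\ge\delta$, contradicting $\norm{\pi_x(t^\ast)-c}<\delta$. Therefore $c$ must be a local maximum.

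The remaining task, and the main obstacle, is to show that this maximum is the destination $m$ and that reaching within $\delta$ of it contradicts $t^\ast\le t_\epsilon(x)$; this is where the dynamics near a mode enter. I would reuse the estimate inside the proof of Lemma~\ref{lem::eigen}: on $\mathcal{C}\oplus r_0$ with $r_0=\tfrac{\lambda_{\min}}{2d^3\norm{f}_{3,\max}}$, the eigenvalues of $H$ differ from those at the adjacent critical point by at most $\tfrac{1}{2}\lambda_{\min}$, so near a maximum $H$ is negative definite. Shrinking $\delta_0$ if necessary so that $\delta_0\le r_0$, the point $\pi_x(t^\ast)$ lies in this strictly concave neighborhood of $c$, where a direct computation using $g(c)=0$ gives
\begin{equation*}
\frac{d}{dt}\tfrac{1}{2}\norm{\pi_x(t)-c}^2 = (\pi_x(t)-c)^T g(\pi_x(t)) \le -\tfrac{1}{2}\lambda_{\min}\norm{\pi_x(t)-c}^2 < 0 .
\end{equation*}
Hence $\norm{\pi_x(t)-c}$ decreases strictly while the flow stays in the neighborhood, making it forward invariant and forcing $\pi_x(t)\to c$; since the flow converges to $m$, we get $c=m$.

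Finally I would close the contradiction. Because $\norm{\pi_x(t^\ast)-m}<\delta<\sqrt{\epsilon}$ (using $\delta<\epsilon^2<\sqrt{\epsilon}$ for $\epsilon\in(0,1)$, which follows from $\delta_2>\epsilon^2>\delta$), continuity produces $s<t^\ast$ with $\pi_x(s)$ still in the concave neighborhood and $\norm{\pi_x(s)-m}<\sqrt{\epsilon}$; by the monotonicity above, $\norm{\pi_x(u)-m}\le\norm{\pi_x(s)-m}<\sqrt{\epsilon}$ for all $u\ge s$, so $\pi_x(u)\in B(m,\sqrt{\epsilon})$ for every $u\ge s$. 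By definition of $t_\epsilon(x)$ this yields $t_\epsilon(x)\le s<t^\ast$, contradicting $t^\ast\le t_\epsilon(x)$. This establishes $\eta_\epsilon(x)\ge\tfrac{1}{2}\lambda_{\min}\delta$ for every $x$ with $d(x,B)=\delta$, and the ``moreover'' claim $\gamma_\epsilon(\delta)\ge\tfrac{1}{2}\lambda_{\min}\delta$ follows immediately by taking the infimum over $x\in B_\delta$.
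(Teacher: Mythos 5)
Your proof is correct and rests on the same two pillars as the paper's own argument: Lemma~\ref{lem::dist} keeps the flow at distance at least $\delta$ from $B$, and Lemma~\ref{lem::eigen} converts a small gradient into proximity to $\mathcal{C}$, combined with the observation that every critical point is either a local mode or lies on $B$. The difference is structural. The paper argues directly: it asserts that the segment $\{\pi_x(t):0\le t\le t_\epsilon(x)\}$ lies in $\mathcal{H}(\epsilon,\delta)=\{y: d(y,B)\ge\delta,\ d(y,M)\ge\sqrt{\epsilon}\}$, that $\mathcal{H}(\epsilon,\delta)\cap(\mathcal{C}\oplus\delta)=\emptyset$, and then reads off the gradient lower bound from Lemma~\ref{lem::eigen}. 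What the paper leaves implicit is why the flow cannot dip inside $B(m,\sqrt{\epsilon})$ \emph{before} time $t_\epsilon(x)$: since $t_\epsilon(x)$ is defined as the time of \emph{permanent} entry, this containment needs forward invariance of small balls around the destination mode. Your contradiction argument supplies exactly this: the eigenvalue-perturbation bound near a critical point gives
$\frac{d}{dt}\frac{1}{2}\norm{\pi_x(t)-c}^2\le-\frac{\lambda_{\min}}{2}\norm{\pi_x(t)-c}^2$,
which makes the concave neighborhood forward invariant, identifies the nearby maximum $c$ with the destination $m$, and forces $t_\epsilon(x)<t^\ast$. So your route is a bit longer, but it rigorizes the one step the paper's proof glosses over; the paper's route is shorter because it treats the near-mode behavior as definitional.

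One imprecision worth fixing: you cannot ``shrink $\delta_0$ if necessary,'' since the lemma fixes $\delta_0=\min\bigl\{\delta_1,\delta_2,\frac{H_{\min}}{d^2\cdot\norm{f}_{3,\max}}\bigr\}$ and the claim must hold for every $\delta<\delta_0$. Fortunately the shrinking is unnecessary: your hypothesis $\norm{g(\pi_x(t^\ast))}<\frac{\lambda_{\min}}{2}\delta\le\frac{\lambda_{\min}}{2}\delta_2=g_1$ places $\pi_x(t^\ast)$ in $G_1(g_1)$, and the proof of Lemma~\ref{lem::eigen} already shows $G_1(g_1)\subset\mathcal{C}\oplus R(g_1)$ with $R(g_1)=\frac{\lambda_{\min}}{2d^3\norm{f}_{3,\max}}$; thus $\pi_x(t^\ast)$ automatically lies in the strictly concave neighborhood of the adjacent critical point, regardless of the relative sizes of $\delta_0$ and that radius. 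With that adjustment your argument proves the lemma as stated, to the same standard of rigor as the paper (both proofs share the same harmless edge cases, e.g.\ the behavior when $t^\ast=0$ or when distances are attained with equality).
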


\begin{figure}
\center
\includegraphics{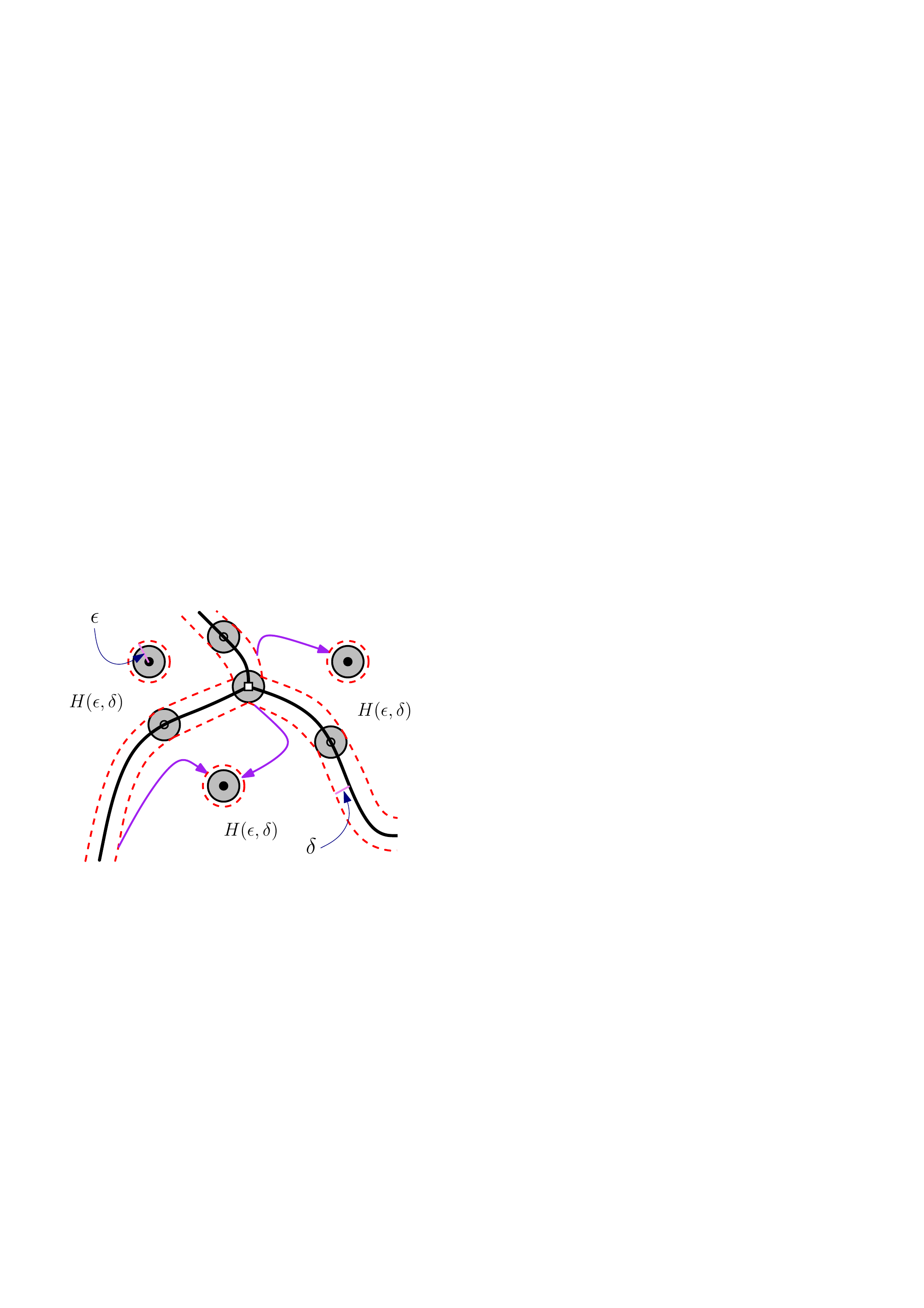}
\caption{Illustration for $\mathcal{H}(\epsilon, \delta)$.
The thick black lines are boundaries $B$; solid dots are local modes;
box is local minimum; empty dots are saddle points.
The three purple lines denote possible gradient flows starting
from some points $x$ with $d(x,B)= \delta$.
The gray disks denote all possible regions such that 
$\norm{g}\leq \frac{\lambda_{\min}}{2}\delta$.
Thus, the amount of gradient within the set $\mathcal{H}(\epsilon, \delta)$
is greater or equal to $\frac{\lambda_{\min}}{2}\delta$.
}
\label{Fig::ex_MG6}
\end{figure}

\begin{proof}
%The proof is a simple application of Lemma~\ref{lem::dist}.

We consider the flow $\pi_x$ starting at
$x$ (not on the boundaries) such that
$$
d(x, B)=\delta<\min\left\{\delta_1,\delta_2\right\}.
$$

For $0\leq t\leq t_\epsilon(x)$, the entire flow is within the set
\begin{equation}
\mathcal{H}(\epsilon, \delta) =\{x: d(x,B)\geq \delta, d(x, M)\geq \sqrt{\epsilon}\}.
\label{eq::H}
\end{equation}
That is,
\begin{equation}
\{\pi_x(t): 0\leq t\leq t_\epsilon(x)\} \subset \mathcal{H}(\epsilon, \delta).
\label{eq::FH}
\end{equation}
This is because by Lemma~\ref{lem::dist}, the flow line cannot get closer
to the boundaries $B$ within distance $\delta$,
and the flow stops when its distance to its destination is at $\epsilon$.
Thus, if we can prove that every point within $\mathcal{H}(\epsilon, \delta)$
has gradient lowered bounded by $\delta \frac{\lambda_{\min}}{2}$,
we have completed the proof.
%Note that we have chosen $\epsilon\geq \delta^2$.
That is, we want to show that 
\begin{equation}
\underset{x\in \mathcal{H}(\epsilon, \delta)}{\inf}\norm{g(x)} \geq \delta \frac{\lambda_{\min}}{2}.
\label{eq::H2}
\end{equation}

To show the lower bound, we focus on those points
whose gradient is small.
Let 
$$
S(\delta) = \left\{x: \norm{g(x)}\leq \delta \frac{\lambda_{\min}}{2}\right\}.
$$
By Lemma~\ref{lem::eigen}, the $S(\delta)$ are regions
around critical points such that
$$ 
S(\delta) \subset \mathcal{C}\oplus \delta.
$$
%since the change in gradient depends on the Hessian matrix
%and the absolute value of eigenvalues of $H(x)$ for $x\in\mathcal{C}\oplus \delta$
%are lower bounded by $\frac{\lambda_{\min}}{2}$
%(by the assumption, $\delta < \frac{\lambda_{\min}}{2\norm{f}_{3,\max}}$ so that
%the changes in second derivative is bounded by $\delta \norm{f}_{3,\max}$,
%which is $\frac{\lambda_{\min}}{2}$).

Since we have chosen $\epsilon$ such that $\epsilon\geq\delta^2$ 
and by the fact that critical points are either in $M$, the collection of all local modes,
or in $B$ the boundaries so that, 
the minimal distance between $\mathcal{H}(\epsilon, \delta)$
and critical points $\mathcal{C}$ is greater that $\delta$ (see equation \eqref{eq::H} for 
the definition of $\mathcal{H}(\epsilon, \delta)$). 
Thus,
$$
(\mathcal{C}\oplus \delta) \cap\mathcal{H}(\epsilon, \delta) = \emptyset,
$$
which implies equation \eqref{eq::H2}:
$$
\underset{x\in \mathcal{H}(\epsilon, \delta)}{\inf}\norm{g(x)} \geq \delta \frac{\lambda_{\min}}{2}.
$$
Now by the fact that all $\pi_x(t)$ with $d(x, B)<\delta$
are within the set $ \mathcal{H}(\epsilon, \delta)$ (equation \eqref{eq::FH}),
we conclude the result.

\end{proof}

\begin{figure}
\center
\includegraphics{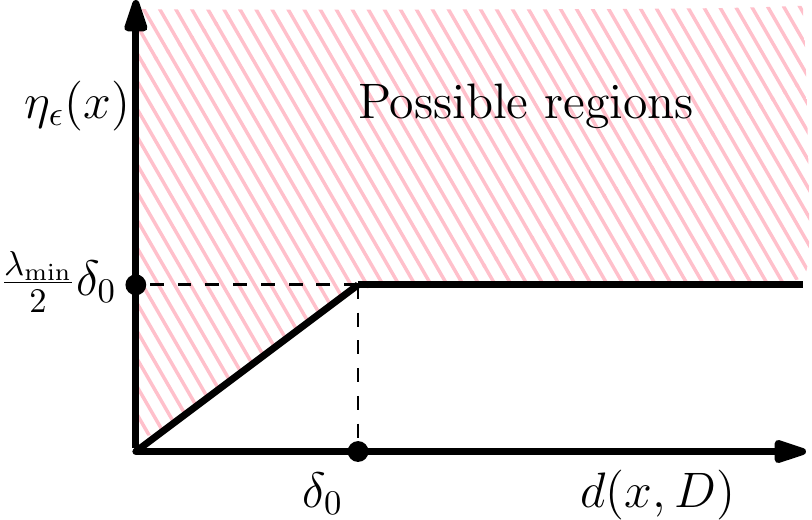}
\caption{Result from Lemma~\ref{lem::grad}: lower bound on minimal gradient. 
This plot shows possible values for minimal gradient $\eta_{\epsilon}(x)$ 
(pink regions) when
$d(x,B)$ is known. Note that we have chosen
$\epsilon^2<\delta_2$.
}
\label{Fig::ex_MGP1}
\end{figure}

%Since the flow cannot move closer to the boundaries
%and assumption (G) ensures that
%the low gradient only occurs around critical points
%(which is either local modes or on the boundaries $B$),
%bounds on distance to boundary automatically bounds
%the minimal gradient.
%Note that the assumption (G) is very weak.
%We can always pick a smaller $\delta_0$ so that 
%only points near critical points have low gradient.

Lemma \ref{lem::grad} links the constant $\gamma_\epsilon(\delta)$ and 
the minimal gradient, which can be used to bound
the time $t_\epsilon(x)$ uniformly and further leads to the following result.
%Thus, we have the following Lemma.
\begin{lem}
Let $\mathbb{K}(\delta) = \{x\in\K: d(x,B)\geq \delta\}=\mathbb{K}\backslash (B\oplus\delta)$
and $\delta_0$ be defined as Lemma~\ref{lem::grad} and $M$ is the collection
of all local modes.
Assume that $f$ has bounded third derivative and is a Morse function and that assumption (D) holds.
Let $\tilde{f}$ be another smooth function.
There exists constants $c_*,c_0,c_1, \epsilon_0$ that all depend
only on $f$ such that
when $(\epsilon, \delta)$ satisfy the following condition
\begin{equation}
\delta<\epsilon<\epsilon_0,\quad \delta<\min\{\delta_0,\Haus(\mathbb{K}(\delta), B(M, \sqrt{\epsilon}))\}
\label{eq::de}
\end{equation}
and if
\begin{equation}
\begin{aligned}
\norm{f-\tilde{f}}^*_{3,\max}&\leq c_0\\
\norm{f- \tilde{f}}_{1,\max}&\leq c_1
\exp\left(-\frac{4\sqrt{d}\norm{f}_{2,\max} \norm{f}_{\infty}}{\delta^2 \lambda^2_{\min}}\right),
%&\norm{f- \tilde{f}}_{\max}\leq c_0, 
%\quad 
%\norm{f- \tilde{f}}_{1,\max}\leq c_1
%\exp\left(-\frac{4\sqrt{d}\kappa_2 \norm{f}_{\max}}{\delta^2 \lambda^2_{\min}}\right),
%\quad \\
%&\norm{f- \tilde{f}}_{2,\max}\leq c_2,
%\quad
%\norm{f- \tilde{f}}_{3,\max}\leq c_3
\label{eq::CCrate1}
\end{aligned}
\end{equation}
then for all $x\in \mathbb{K}(\delta)$
\begin{equation}
\norm{\lim_{t\rightarrow\infty}\pi_x(t) - \lim_{t\rightarrow\infty}\tilde{\pi}_x(t)}\leq 
%c_*\sqrt{\epsilon+\norm{f- \tilde{f}}^*_{1,\max}}.
c_*\sqrt{\norm{f- \tilde{f}}_{\infty}}.
\end{equation}
\label{lem::destination}
\end{lem}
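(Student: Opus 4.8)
The plan is to compare the two destinations $m=\lim_{t\to\infty}\pi_x(t)$ and $\tilde m=\lim_{t\to\infty}\tilde\pi_x(t)$ by cutting each trajectory at the time $t_\epsilon(x)$ at which $\pi_x$ first enters the $\sqrt\epsilon$-ball around its own destination $m$. On the interval $[0,t_\epsilon(x)]$ I would control the two flows directly by a Gronwall comparison; on the tail $[t_\epsilon(x),\infty)$ I would argue that the perturbed flow is already trapped in the basin of the mode of $\tilde f$ that corresponds to $m$, and then read off the final rate from the Morse (quadratic) geometry at that mode. The point is that one can never push Gronwall all the way to $t=\infty$, since the exponential factor diverges; the finite-time estimate is used only to deposit $\tilde\pi_x(t_\epsilon(x))$ into the correct basin.

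First I would bound $t_\epsilon(x)$ uniformly. Along $\pi_x$ the value $f$ rises at rate $\norm{g(\pi_x(t))}^2$, and Lemma~\ref{lem::grad} (with Lemma~\ref{lem::dist} guaranteeing that the flow, once at distance $\delta$ from $B$, never re-enters $B\oplus\delta$) gives $\norm{g(\pi_x(t))}\ge \frac{\lambda_{\min}}{2}\delta$ for $0\le t\le t_\epsilon(x)$ and every $x\in\mathbb{K}(\delta)$. Since the total rise of $f$ along the flow is at most $\norm{f}_\infty$, this forces $t_\epsilon(x)\le 4\norm{f}_\infty/(\delta^2\lambda_{\min}^2)$. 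Writing $\Delta(t)=\norm{\pi_x(t)-\tilde\pi_x(t)}$ with $\Delta(0)=0$, the triangle inequality together with the Lipschitz bound $\norm{\nabla f(a)-\nabla f(b)}\le \sqrt{d}\,\norm{f}_{2,\max}\norm{a-b}$ yields $\Delta'(t)\le \sqrt{d}\,\norm{f}_{2,\max}\,\Delta(t)+\sqrt{d}\,\norm{f-\tilde f}_{1,\max}$, so Gronwall gives $\Delta(t_\epsilon(x))\lesssim \norm{f-\tilde f}_{1,\max}\exp(\sqrt{d}\,\norm{f}_{2,\max}\,t_\epsilon(x))$. The exponent in hypothesis \eqref{eq::CCrate1} is exactly this product $\sqrt{d}\,\norm{f}_{2,\max}\,t_\epsilon(x)$, so \eqref{eq::CCrate1} is calibrated precisely to make $\Delta(t_\epsilon(x))$ no larger than a prescribed constant multiple of $c_1$.

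With $\Delta(t_\epsilon(x))$ small and $\sqrt\epsilon$ small (through $\epsilon_0$ and the geometric separation in \eqref{eq::de}), the point $\tilde\pi_x(t_\epsilon(x))$ lies within $\sqrt\epsilon+\Delta(t_\epsilon(x))$ of $m$. By Lemma~\ref{lem::critical} there is a unique critical point $\tilde m$ of $\tilde f$ near $m$, it is a local maximum, and $\norm{\tilde m-m}=O(\norm{f-\tilde f}_{1,\max})$; because $m$ sits at a fixed distance from $B$, the $\tilde f$-basin of $\tilde m$ contains a fixed-radius neighbourhood of $m$, so once $\sqrt\epsilon+\Delta(t_\epsilon(x))$ is small enough the tail of $\tilde\pi_x$ converges to this $\tilde m$, giving both $\tilde m=\lim_t\tilde\pi_x(t)$ and $\dest_{\tilde f}(m)=\tilde m$ (and hence also $\dest_f(\tilde m)=m$). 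Now the rate comes from comparing values: since $\dest_{\tilde f}(m)=\tilde m$ we have $\tilde f(m)\le \tilde f(\tilde m)$, whence $f(m)\le \tilde f(m)+\norm{f-\tilde f}_\infty\le \tilde f(\tilde m)+\norm{f-\tilde f}_\infty\le f(\tilde m)+2\norm{f-\tilde f}_\infty$. As $m$ is a non-degenerate maximum of $f$ with all Hessian eigenvalues $\le -\lambda_{\min}$ and $\tilde m$ lies in its quadratic neighbourhood, $f(\tilde m)\le f(m)-\frac{\lambda_{\min}}{4}\norm{\tilde m-m}^2$. Combining the two gives $\frac{\lambda_{\min}}{4}\norm{\tilde m-m}^2\le 2\norm{f-\tilde f}_\infty$, i.e. $\norm{m-\tilde m}\le c_*\sqrt{\norm{f-\tilde f}_\infty}$ with $c_*=\sqrt{8/\lambda_{\min}}$.

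The main obstacle is the trapping step: everything hinges on showing, uniformly over all $x\in\mathbb{K}(\delta)$, that $\tilde\pi_x(t_\epsilon(x))$ falls into the basin of exactly the mode $\tilde m$ corresponding to $m$. This is what the uniform time bound and the exponentially small hypothesis \eqref{eq::CCrate1} are designed to secure, while the geometric condition \eqref{eq::de} keeps the $\sqrt\epsilon$-balls around the modes separated from $B$ so that a trapped flow cannot drift across a basin boundary. Once the correspondence of modes is in place, the square-root rate is a soft consequence of the Morse structure and requires no further estimation of the flows.
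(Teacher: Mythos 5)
Your proposal is correct, and its skeleton is the same as the paper's: cut each trajectory at $t_\epsilon(x)$, get the uniform bound $t_\epsilon(x)=O\left(\norm{f}_\infty/(\delta^2\lambda^2_{\min})\right)$ by integrating $\frac{d}{dt}f(\pi_x(t))=\norm{g(\pi_x(t))}^2$ against the gradient lower bound of Lemma~\ref{lem::grad} (with Lemma~\ref{lem::dist} keeping the flow inside $\mathcal{H}(\epsilon,\delta)$), and observe that hypothesis \eqref{eq::CCrate1} is calibrated exactly to neutralize the Gronwall factor $e^{\sqrt{d}\norm{f}_{2,\max}t_\epsilon(x)}$ --- indeed your constant $4$ in the exponent matches \eqref{eq::CCrate1} precisely, whereas the paper's bound $t_\epsilon(x)\leq 8\norm{f}_\infty/(\delta^2\lambda^2_{\min})$ leaves a harmless factor-of-two slack in the constants. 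Where you genuinely differ is in what is proved versus cited: the paper imports the finite-time flow-comparison inequality from equations (40--44) of \cite{arias2016estimation} and converts the resulting $\sqrt{\epsilon+\norm{f-\tilde{f}}^*_{1,\max}}$ bound into the final rate via their Lemma 7, while you rederive both ingredients yourself --- an explicit Gronwall estimate on $[0,t_\epsilon(x)]$, a basin-trapping step near the destination (strong concavity of $\tilde{f}$ on a fixed-radius ball around $m$, which forces the tail of $\tilde{\pi}_x$ to converge to the matched mode $\tilde{m}$ supplied by Lemma~\ref{lem::critical}), and the value-comparison argument $f(m)\leq f(\tilde{m})+2\norm{f-\tilde{f}}_\infty\leq f(m)-\tfrac{\lambda_{\min}}{4}\norm{\tilde{m}-m}^2+2\norm{f-\tilde{f}}_\infty$, which yields $\norm{m-\tilde{m}}\leq\sqrt{8/\lambda_{\min}}\,\sqrt{\norm{f-\tilde{f}}_\infty}$. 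Your route buys self-containedness and makes transparent why the rate is the square root of the sup-norm difference (it is exactly the quadratic Morse geometry at the destination), at the price of having to check the trapping step uniformly over $x\in\mathbb{K}(\delta)$ --- which your uniform time bound, the mode separation from Lemma~\ref{lem::critical}, and condition \eqref{eq::de} do supply; the paper's route is shorter, its only new ingredient being the uniform $t_\epsilon$ bound that upgrades the pointwise results of \cite{arias2016estimation} to uniform ones, but it leaves the tail analysis and the final square-root conversion as black boxes.
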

Note that condition \eqref{eq::de} holds when $(\epsilon, \delta)$ are sufficiently small.

\begin{proof}
The proof of this lemma is closely related to the proof of Theorem 2 of \cite{arias2016estimation}.
The results in \cite{arias2016estimation} is a pointwise convergence of gradient flows; now
we will generalize their findings to the uniform convergence. 
%The proof of this lemma basically follows the proof of Theorem 2 of \cite{arias2016estimation}
%with some modification since they only prove the point wise convergence
%and now we extend it to uniform convergence within $\K(\delta)$.

Note that $\K(\delta) = \mathcal{H}(\epsilon, \delta) \cup B(x, \sqrt{\epsilon})$.
For $x\in B(x, \sqrt{\epsilon})$, the result is trivial when $\epsilon$ is sufficiently small.
Thus, we assume $x\in \mathcal{H}(\epsilon, \delta) $.

From equation (40--44) in \cite{arias2016estimation} (proof to their Theorem 2),
\begin{equation}
\begin{aligned}
\norm{\lim_{t\rightarrow\infty}&\pi_x(t) - \lim_{t\rightarrow\infty}\tilde{\pi}_x(t)}\\
&\leq
\sqrt{\frac{2}{\lambda_{\min}} \left(2\lambda_{\min}\epsilon + \frac{\norm{f}_{1,\max}}{\sqrt{d}\norm{f}_{2, \max}} \norm{f-\tilde{f}}_{1,\max}e^{\sqrt{d} \norm{f}_{2,\max} t_\epsilon(x)}+2\norm{f-\tilde{f}}_{\infty}\right) }
\end{aligned}
\label{eq::Crate}
\end{equation}
under condition \eqref{eq::CCrate1} and $\epsilon<\epsilon_0$ for some constant $\epsilon_0$.

Thus, the key is to bound $t_\epsilon(x)$.
Recall that $x\in \mathcal{H}(\epsilon, \delta)$.
Now consider the gradient flow $\pi_x$
and define $z= \pi_x(t_\epsilon(x))$.
\begin{equation}
\begin{aligned}
f(z)- f(x) &= \int_{0}^{t_\epsilon(x)} \frac{\partial f(\pi_x(s))}{\partial s} ds = 
\int_{0}^{t_\epsilon(x)} g(\pi_x(s))^T \pi'_x(s)ds\\ 
&= 
\int_{0}^{t_\epsilon(x)}\norm{g(\pi_x(s))}^2 ds
\geq \gamma_\epsilon(\delta)^2 t_\epsilon(x).
\end{aligned}
\end{equation}
%Note that we use Lemma~\ref{lem::grad} in the last inequality.
Since $f(z)-f(x)\leq 2\norm{f}_{\infty}$, we have
$$
\norm{f}_{\infty} \geq \frac{1}{2}\gamma_\epsilon(\delta)^2 t_\epsilon(x)
$$
and by Lemma~\ref{lem::grad},
\begin{equation}
t_\epsilon(x) \leq \frac{2\norm{f}_{\infty}}{\gamma_\epsilon(\delta)^2}
\leq\frac{8\norm{f}_{\infty}}{\delta^2\lambda^2_{\min}}
\label{eq::Ct}
\end{equation}
for all $x \in \mathcal{H}(\epsilon, \delta)$.

Now plug-in \eqref{eq::Ct} into \eqref{eq::Crate}, we have
\begin{equation}
\norm{\lim_{t\rightarrow\infty}\pi_x(t) - \lim_{t\rightarrow\infty}\tilde{\pi}_x(t)}
\leq
\sqrt{a_0\epsilon + a_1\norm{f-\tilde{f}}_{1,\max}e^{\sqrt{d} \norm{f}_{2,\max} \frac{8\norm{f}_{\infty}}{\delta^2\lambda^2_{\min}}}+a_2\norm{f-\tilde{f}}_{\infty}}
\end{equation}
for some constants $a_0,a_1,a_2$.
Now using condition \eqref{eq::CCrate1} to replace the second
term of right hand side, we conclude 
$$
\norm{\lim_{t\rightarrow\infty}\pi_x(t) - \lim_{t\rightarrow\infty}\tilde{\pi}_x(t)}
\leq
a_3\sqrt{\epsilon+\norm{f- \tilde{f}}^*_{1,\max}}
$$
for some constant $a_3$.

By Lemma 7 in \cite{arias2016estimation}, 
there exists some constant $c_3$ such that
when $a_3\sqrt{\epsilon+\norm{f- \tilde{f}}^*_{1,\max}}< 1/c_3$,
$$
\norm{\lim_{t\rightarrow\infty}\pi_x(t) - \lim_{t\rightarrow\infty}\tilde{\pi}_x(t)}
\leq \sqrt{2}c_3 \norm{f-\tilde{f}}.
$$
Thus, when both $\epsilon$ and $\norm{f-\tilde{f}^*_{3,\max}}$
are sufficiently small, 
there exists some constant $c_*$ such that 
$$
\norm{\lim_{t\rightarrow\infty}\pi_x(t) - \lim_{t\rightarrow\infty}\tilde{\pi}_x(t)}
\leq c_* \norm{f-\tilde{f}}
$$
for all $x\in \mathcal{H}(\epsilon,\delta)$.

\end{proof}
Now we turn to the proof of Theorem~\ref{thm::Haus}.

\begin{proof}[ of Theorem~\ref{thm::Haus}]
The proof contains two parts. 
In the first part,
we show that when 
$
\norm{f-\tilde{f}}^*_{3,\max}
$
is sufficiently small,
we have $\Haus(B,\tilde{B})< \frac{H_{\min}}{d^2\norm{f}_{3,\max}}$,
where $B$ and $\tilde{B}$ are the boundary of descending
$d$-manifolds for $f$ and $\tilde{f}$.
The second part of the proof is to derive the convergence rate.
Because $\Haus(B,\tilde{B})< \frac{H_{\min}}{d^2\norm{f}_{3,\max}}$,
we can apply the second assertion of Lemma~\ref{lem::Gdist} to 
derive the rate of convergence.
Note that $\cC$ and $\tilde{\cC}$ are the critical points for $f$ and $\tilde{f}$
and $M\equiv C_0$, $\tilde{M}\equiv\tilde{C}_0$ are the local modes for $f$ and $\tilde{f}$.

{\bf Part 1: $\Haus(B,\tilde{B})< \frac{H_{\min}}{d^2\cdot\norm{f}_{3,\max}}$,
the upper bound for Hausdorff distance.}
Let $\sigma = \min\{\norm{x-y}: x, y\in M , x\neq y\}$.
That is, $\sigma$ is the smallest distance between any pair of distinct local modes.
By Lemma~\ref{lem::critical}, when
$ \norm{f-\tilde{f}}^*_{3,\max} $
is small, $f$ and $\tilde{f}$ have the same number
of critical points and
$$
\Haus(\cC, \tilde{\cC}) \leq A \norm{f- \tilde{f}}^*_{2,\max}\leq A \norm{f-\tilde{f}}^*_{3,\max},
$$
where $A$ is a constant that depends only on $f$
(actually, we only need $\norm{f-\tilde{f}}^*_{2,\max}$ to be small here).

Thus, whenever $\norm{f-\tilde{f}}^*_{3,\max}$ satisfies
\begin{equation}
\norm{f-\tilde{f}}^*_{3,\max} \leq \frac{\sigma}{3A},
\end{equation}
every $M$ has an unique corresponding point in $\tilde{M}$
and vice versa. In addition, for a pair of local modes
$(m_j, \tilde{m}_j): m_j\in M, \tilde{m}_j\in \tilde{M}$, 
their distance is bounded by
$\norm{m_j-\tilde{m}_j}\leq \frac{\sigma}{3}$.

Now we pick 
%\begin{equation}
$(\epsilon,\delta )$ such that they
%\end{equation}
satisfy equation \eqref{eq::de}.
Then when $\norm{f-\tilde{f}}^*_{3,\max}$ is sufficiently small, by Lemma \ref{lem::destination},
for every $x\in\mathcal{H}(\epsilon, \delta)$ we have
$$
\norm{\lim_{t\rightarrow\infty}\pi_x(t) - \lim_{t\rightarrow\infty}\tilde{\pi}_x(t)}\leq c_*\sqrt{\norm{f- \tilde{f}}_{\infty}}\leq c_*\sqrt{\norm{f-\tilde{f}}^*_{3,\max}} .
$$
Thus, whenever 
\begin{equation}
%\norm{f-\tilde{f}}^*_{3,\max}\leq  \frac{1}{c_*}\sqrt{\frac{\sigma}{3}},
\norm{f-\tilde{f}}^*_{3,\max}\leq  \frac{1}{c^2_*}\left(\frac{\sigma}{3}\right)^2,
\end{equation}
$\pi_x(t)$ and $\tilde{\pi}_x(t)$ leads to the same pair of modes.
Namely,
the boundaries $\tilde{B}$ will not intersect the region
$\mathcal{H}(\epsilon,\delta)$.
And it is obvious that $\tilde{B}$ cannot intersect $B(M,\sqrt{\epsilon})$.
To conclude, 
\begin{equation}
\begin{aligned}
\tilde{B}\cap \mathcal{H}(\epsilon,\delta) &= \emptyset\\
\tilde{B}\cap B(M,\sqrt{\epsilon})&= \emptyset\\
\Rightarrow\tilde{B}\cap \mathbb{K}(\delta)&= \emptyset,
\end{aligned}
\end{equation}
because by definition, $\mathbb{K}(\delta) = \mathcal{H}(\epsilon,\delta)\cap B(M,\sqrt{\epsilon})$.

Thus, $\tilde{B}\subset \mathbb{K}(\delta)^C = B\oplus \delta$,
which implies $\Haus(B,\tilde{B})\leq \delta< \frac{H_{\min}}{d^2\norm{f}_{3,\max}}$
(note that $\delta <\delta_0\leq\frac{H_{\min}}{d^2\norm{f}_{3,\max}}$ appears in equation
\eqref{eq::de} and Lemma~\ref{lem::grad}).

{\bf Part 2: Rate of convergence.}
To derive the convergence rate, we use proof by contradiction. 
Let 
$q \in B,\tilde{q}\in \tilde{B}$ a pair of points
such that their distance attains the Hausdorff distance $\Haus\left(\tilde{B},B\right)$. 
Namely, $q$ and $\tilde{q}$ satisfy
$$
\norm{q-\tilde{q}} = \Haus\left(\tilde{B},B\right)
$$
and
either $q$ is the projected point from $\tilde{q}$ onto $B$
or $\tilde{q}$ is the projected point from $q$ onto $\tilde{B}$.
%We will use proof by contradiction to bound $\Haus(\tilde{B},B)$.
%We begin with a study on the line segment connecting $q,\tilde{q}$
%and show some useful properties for all points on this line segment.

Recall that $\mathbb{V}(x)$ is the normal space to $B$ at $x\in B$ and we define $\tilde{\mathbb{V}}(x)$
similarly for $x\in \tilde{B}$.
An important property of the pair $q,\tilde{q}$ is that 
$q-\tilde{q}\in \mathbb{V}(q), \tilde{\mathbb{V}}(\tilde{q})$.
If this is not true, we can slightly perturb $q$ (or $\tilde{q}$) on $B$ (or $\tilde{B}$)
to get a projection distance larger than the Hausdorff distance, which leads to a contradiction.

Now we choose $x$ to be a point between $q,\tilde{q}$ such that
$ x = \frac{1}{3} q + \frac{2}{3 }\tilde{q}$.
We define $e(x) = \frac{q-x}{\norm{q-x}}$ and $\tilde{e}(x) = \frac{\tilde{q}-x}{\norm{\tilde{q}-x}}$.
Then $e(x)\in\mathbb{V}(q) $ and $\tilde{e}(x)\in \tilde{\mathbb{V}}(\tilde{q})$
and $e(x)= -\tilde{e}(x)$.

%Let $x$ be any point between $q,\tilde{q}$.
%i.e. $x= \alpha q +(1-\alpha) \tilde{q}$ for some $0< \alpha < 1$.
%We define $e(x) = \frac{q-x}{\norm{q-x}}$ and $\tilde{e}(x) = \frac{\tilde{q}-x}{\norm{\tilde{q}-x}}$.
%Then $e(x)\in\mathbb{V}(q) $ and $\tilde{e}(x)\in \tilde{\mathbb{V}}(\tilde{q})$
%and $e(x)= -\tilde{e}(x)$.

By Lemma \ref{lem::Gdist} (second assertion), 
\begin{equation}
\begin{aligned}
\ell(x) &= e(x)^Tg(x) \geq \frac{1}{2}H_{\min} \norm{q-x}>0\\
\tilde{\ell}(x) &= \tilde{e}(x)^T\tilde{g}(x) \geq \frac{1}{2}\tilde{H}_{\min} \norm{\tilde{q}-x}>0.
\end{aligned}
\end{equation}
Thus, for every $x$ between $q,\tilde{q}$, 
\begin{equation}
e(x)^Tg(x)>0, \quad, e(x)^T \tilde{g}(x)= -\tilde{e}(x)^T\tilde{g}(x) <0.
\label{eq::eg1}
\end{equation}
Note that we can apply Lemma \ref{lem::Gdist} to $\tilde{f}$ and its gradient
because when $\norm{f-\tilde{f}}^*_2$ is sufficiently small, 
the assumption {\bf(D)} holds for $\tilde{f}$ as well.

%Now we consider $x \rightarrow \tilde{q}$ and find an upper bound for 
%$\norm{q-\tilde{q}} = \Haus(\tilde{B},B)$.
To get the upper bound of $\norm{q-\tilde{q}} = \Haus(\tilde{B},B)$, 
note that 
$ \|q-x\| = \frac{2}{3}\|q-\tilde{q}\|$, so
\begin{equation}
\begin{aligned}
e(x)^T \tilde{g}(x) & = e(x)^T (\tilde{g}(x)-g(x)) + e(x)^Tg(x)\\
&\geq e(x)^T g(x) - \norm{\tilde{f}-f}_{1,\max}\\
&\geq \frac{1}{2}H_{\min} \norm{q-x} -  \norm{\tilde{f}-f}_{1,\max}\quad 
\mbox{(By Lemma \ref{lem::Gdist})}\\
& = \frac{1}{3}H_{\min} \norm{q-\tilde{q}} -  \norm{\tilde{f}-f}_{1,\max}.
\end{aligned}
\end{equation}
Thus, as long as 
$$
 \Haus(\tilde{B},B) = \norm{q-\tilde{q}}  > 3\frac{\norm{\tilde{f}-f}_{1,\max}}{H_{\min}},
$$
we have $e(x)^T \tilde{g}(x)>0$,
a contradiction to equation \eqref{eq::eg1}.
Hence, we conclude that 
$$ 
\Haus(\tilde{B},B) \leq  3\frac{\norm{\tilde{f}-f}_{1,\max}}{H_{\min}} =O \left(\norm{\tilde{f}-f}_{1,\max}\right).
$$

\end{proof}

\begin{proof}[ of Theorem~\ref{thm::number}]
%It is not so easy to directly prove the rate for the rand index.
%Alternatively, we can make use of the asymptotic 

%To prove Theorem~\ref{thm::number}, a key observation is that 
%the mis-clustered occurs whenever a point is near the cluster boundaries
%and the distance to boundaries must be less than $\Haus(\hat{B}_n,B)$.
To prove the asymptotic rate of the rand index,
we assume that for every local mode of $p$,
there exists one and only one local mode of $\hat{p}_n$
that is close to the specific mode of $p$.
By Lemma~\ref{lem::critical}, this is true when $\norm{\hat{p}_n-p}^*_{3,\max}$ is sufficiently small.
Thus, after relabeling, 
the local mode $\hat{m}_\ell$ of $\hat{p}_n$ 
is an estimator to the local mode $m_\ell$ of $p$.
Let $\hat{W}_\ell$ be the basin of attraction to $\hat{m}_\ell$ using
$\nabla \hat{p}_n$ and $W_\ell$ be the basin of attraction to $m_\ell$ using
$\nabla p$.
Let $A\triangle B =\{x: x\in A, x\notin B\}\cup \{x:x\in B, x\notin A\}$ be 
the symmetric difference between sets $A$ and $B$.
The regions
\begin{equation}
E_n = \bigcup_{\ell} \left(\hat{W}_\ell \triangle W_\ell\right)\subset\K
\end{equation}
are where the two mode clustering disagree with each other.
Note that $E_n$ are regions between the two boundaries $\hat{B}_n$ and $B$

Given a pair of points $X_i$ and $X_j$,
%the function $\Psi(X_i,X_j)$ disagree with $\hat{\Psi}_n(X_i,X_j)$
%if either $X_i$ or $X_j$ (or maybe both) are in $E_n$.
%That is, 
\begin{equation}
\Psi(X_i,X_j)\neq\hat{\Psi}_n(X_i,X_j) \Longrightarrow X_i \mbox{ or }X_j \in E_n.
\label{eq::rand::pf1}
\end{equation}
By the definition of rand index \eqref{eq::rand2},
\begin{equation}
1-\rand\left(\hat{p}_n,p\right) = {n \choose 2}^{-1} \sum_{i,j}1\left(\Psi(X_i,X_j)\neq\hat{\Psi}_n(X_i,X_j)\right)
    \label{eq::rand::pf2}
\end{equation}
Thus, if we can bound the ratio of data points within $E_n$,
we can bound the rate of rand index.

Since $\K$ is compact and $p$ has bounded second derivatives, 
the volume of $E_n$ is bounded by
\begin{equation}
\Vol(E_n) = O\left(\Haus(\hat{B}_n,B)\right).
\end{equation}
Note $\Vol(A)$ denotes the volume (Lebesgue measure) of a set $A$.
We now construct a region surrounding $B$ such that
\begin{equation}
E_n \subset B\oplus \Haus(\hat{B}_n,B) = V_n
\end{equation}
and 
\begin{equation}
\Vol(V_n) = O\left(\Haus(\hat{B}_n,B)\right).
\end{equation}

Now we consider a collection of subsets of $\K$:
\begin{equation}
\mathcal{V} = \{B\oplus r: R> r>0\},
\end{equation}
where $R<\infty$ is the diameter for $\K$.
For any set $A\subset \K$,
let $P(X_i\in A)$ and $\hat{P}_n(A)= \frac{1}{n}\sum_{i=1}^n 1(X_i\in A)$
denote the probability of an observation within $A$
and the empirical estimate for that probability, respectively.
It is easy to see that $V_n\in \mathcal{V}$ for all $n$
and the class $\mathcal{V}$ has a finite VC dimension (actually,
the VC dimension is $1$).
By the empirical process theory (or so-called VC theory, see e.g. \cite{vapnik1971uniform}), 
\begin{equation}
\underset{A\in \mathcal{V}}{\sup} \left|
P(X_i\in A)-\hat{P}_n(A)
\right| = O_\P\left(\sqrt{\frac{\log (n)}{n}}\right).
\end{equation}
%{\color{red}(WARNING: I am not sure if this is true; it is true if $A$ is rectangular set.)}
Thus, 
\begin{equation}
\left|
P(X_i\in V_n)-\hat{P}_n(V_n)
\right|  
%=\left|P(X_i\in E_n)-\frac{\mathcal{N}_n}{n}\right|  
= O_\P\left(\sqrt{\frac{\log (n)}{n}}\right).
\end{equation}

Now by equations \eqref{eq::rand::pf1} and \eqref{eq::rand::pf2},
\begin{equation}
1-\rand\left(\hat{p}_n,p\right) \leq 8\hat{P}_n(E_n) \leq 8\hat{P}_n (V_n) \leq 8P(X_i\in V_n) + O_\P\left(\sqrt{\frac{\log (n)}{n}}\right).
\end{equation}
%The above implies
Therefore,
\begin{equation}
\begin{aligned}
1-\rand\left(\hat{p}_n,p\right) &\leq P(X_i\in V_n) + O_\P\left(\sqrt{\frac{\log (n)}{n}}\right)\\
&\leq \sup_{x\in \K}p(x)\times\Vol(V_n) + O_\P\left(\sqrt{\frac{\log (n)}{n}}\right)\\
&\leq O\left(\Haus(\hat{B}_n,B)\right)+O_\P\left(\sqrt{\frac{\log (n)}{n}}\right)\\
& = O\left(h^2\right)+ O_\P\left(\sqrt{\frac{\log (n)}{nh^{d+2}}}\right),
\end{aligned}
\end{equation}
which completes the proof.
Note that we apply Theorem~\ref{thm::mode} in the last equality.

\end{proof}

\begin{proof}[ of Theorem~\ref{thm::MSR}]
Let $(X_1,Y_1),\cdots, (X_n, Y_n)$ be the observed data.
Let $\hat{E}_\ell$ denote the $d$-cell for the nonparametric pilot regression estimator 
$\hat{m}_n$. With $I_\ell = \{i: X_i\in \hat{E}_\ell\}$, we define $\mathbb{X}_\ell$
as the matrix with rows $X_i$, $i\in I_\ell$ and similarly we define $\mathbb{Y}_\ell$.

We define $\mathbb{X}_{0,\ell}$ to be the matrix similar to $\mathbb{X}_\ell$
except that the row elements are those $X_i$ within $E_\ell$, the $d$-cell
defined on true regression function $m$.
We also define $\mathbb{Y}_{0,\ell}$ to be the corresponding $Y_i$.

By the theory of linear regression,
the estimated parameters
$\hat{\mu}_\ell, \hat{\beta}_\ell$ have a closed form solution:
\begin{equation}
(\hat{\mu}_\ell, \hat{\beta}_\ell)^T = (\mathbb{X}_\ell^T \mathbb{X}_\ell)^{-1}\mathbb{X}_\ell^T \mathbb{Y}_\ell.
\label{eq::Regdef1}
\end{equation}
Similarly, we define
\begin{equation}
(\hat{\mu}_{0,\ell}, \hat{\beta}_{0,\ell})^T = (\mathbb{X}_{0,\ell}^T \mathbb{X}_{0,\ell})^{-1}\mathbb{X}_{0,\ell}^T \mathbb{Y}_{0,\ell}
\label{eq::Regdef2}
\end{equation}
as the estimated coefficients using $\mathbb{X}_{0,\ell}$ and $\mathbb{Y}_{0,\ell}$.

As $\norm{\tilde{m}-m}^*_{3,\max}$ is small,
by Theorem~\ref{thm::number},
the number of rows at which $\mathbb{X}_\ell$ and $\mathbb{X}_{0,\ell}$
differ is bounded by $O(n\times \norm{\hat{m}_n-m}_{1,\max})$.
This is because an observation (a row vector) that appears only in
one of $\mathbb{X}_\ell$ and $\mathbb{X}_{0,\ell}$ is those fallen 
within either $\hat{E}_\ell$ or $E_\ell$ but not both.
Despite the fact that Theorem \ref{thm::number} is for basins of attraction (d-descending manifolds) 
of local modes, it can be easily generalized to $0$-ascending manifolds of local minima under assumption (A).
Thus, the similar bound holds for d-cells as well.
%Theorem~\ref{thm::number} bounds the ratio of these cases 
Thus, we conclude that
\begin{equation}
\begin{aligned}
\left\|\frac{1}{n}\mathbb{X}_{\ell}^T \mathbb{X}_{\ell}-
\frac{1}{n}\mathbb{X}_{0,\ell}^T \mathbb{X}_{0,\ell}\right\|_{\infty} 
&= O(\norm{\hat{m}_n-m}_{1,\max})\\
\left\|\frac{1}{n}\mathbb{X}_{\ell}^T \mathbb{Y}_{\ell}-
\frac{1}{n}\mathbb{X}_{0,\ell}^T \mathbb{Y}_{0,\ell}\right\|_{\infty} 
&= O( \norm{\hat{m}_n-m}_{1,\max})
\end{aligned}
\label{eq::reg}
\end{equation}
since $(\mathbb{X}_{\ell}, \mathbb{Y}_{\ell})$ 
and $(\mathbb{X}_{0,\ell}, \mathbb{Y}_{0,\ell})$ only differ by 
$O(n\times \norm{\hat{m}_n-m}_{1,\max})$ elements.
Thus,
\begin{equation}
\begin{aligned}
\left\|(\hat{\mu}_{0,\ell}-\hat{\mu}_\ell, \hat{\beta}_{0,\ell}-\hat{\beta}_\ell)\right\|_{\infty}
&= \left\|
\left(\frac{1}{n}\mathbb{X}_{0,\ell}^T \mathbb{X}_{0,\ell}\right)^{-1}\frac{1}{n}\mathbb{X}_{0,\ell}^T \mathbb{Y}_{0,\ell} -
\left(\frac{1}{n}\mathbb{X}_\ell^T \mathbb{X}_\ell\right)^{-1}\frac{1}{n}\mathbb{X}_\ell^T \mathbb{Y}_\ell
\right\|_{\infty}\\
& = O( \norm{\hat{m}_n-m}_{1,\max}),
\end{aligned}
\end{equation}
which implies.
\begin{equation}
\max\left\{\norm{\hat{\mu}_{0,\ell}-\hat{\mu}_{\ell}}, \norm{\hat{\beta}_{0,\ell}-\hat{\beta}_{\ell}}\right\}
= O( \norm{\hat{m}_n-m}_{1,\max}).
\label{eq::reg1}
\end{equation}

Now by the theory of linear regression, 
\begin{equation}
\max\left\{\norm{\hat{\mu}_{0,\ell}-\mu_\ell}, \norm{\hat{\beta}_{0,\ell}-\beta_\ell}\right\}
= O_{\P}\left( \frac{1}{\sqrt{n}}\right).
\label{eq::reg2}
\end{equation}
Thus, combining \eqref{eq::reg1} and \eqref{eq::reg2}
and use the fact that all the above bounds are uniform over 
each cell, we have proved that the parameters converge
at rate $O( \norm{\hat{m}_n-m}_{1,\max}) + O_{\P}\left( \frac{1}{\sqrt{n}}\right)$.

%The last part is to show the regions that parameters estimation
%can be transformed into functional estimation; this proof is
%similar to part 2 of the proof to Theorem~\ref{thm::MSS}.
For points within the regions where $E_\ell$ and $\hat{E}_\ell$ agree with each other,
the rate of convergence for parameter estimation translates into
the rate of $\hat{m}_{n, \MSR}- m_{\MSR}$.
The regions that $E_\ell$ and $\hat{E}_\ell$ disagree to each other, denoted as $\mathbb{N}_n$,
have Lebesgue $O(\norm{\hat{m}_n-m}_{1,\max})$ by Theorem~\ref{thm::Haus}.
Thus, we have completed the proof.
%Thus, we have completed the proof of the first assertion (equation \eqref{eq::thm::MSR1}).

%for the first assertion (equation \eqref{eq::thm::MSR1}).

%For the second assertion,
%by theory of nonparametric regression,
%the kernel regression under assumption (K1--2)
%yields the rate
%\begin{equation}
%\norm{\hat{m}_n-m_h}^*_{1,\max} = O_\P\left(\sqrt{\frac{\log (n)}{nh^{d+2}}}\right).
%\end{equation}
%Thus, when $h$ is fixed, the above rate is $O_\P\left(\sqrt{\frac{\log (n)}{n}}\right)$.
%Use this fact and the result from first assertion proves the second assertion
%(equation \eqref{eq::thm::MSR2}).

\end{proof}

\begin{proof}[ of Theorem~\ref{thm::MSR2}]
The proof of Theorem~\ref{thm::MSR2} is nearly identical to the proof of Theorem~\ref{thm::MSR}.
The only difference is that 
the number of rows that $\mathbb{X}_\ell$ and $\mathbb{X}_{0,\ell}$
differ is bounded by $O(n\times \norm{\hat{m}_n-m}_{1,\max}^\beta)$
due to the low noise condition \eqref{eq::low_noise}.
Thus, equation \eqref{eq::reg} becomes
\begin{equation}
\begin{aligned}
\left\|\frac{1}{n}\mathbb{X}_{\ell}^T \mathbb{X}_{\ell}-
\frac{1}{n}\mathbb{X}_{0,\ell}^T \mathbb{X}_{0,\ell}\right\|_{\infty} 
&= O(\norm{\hat{m}_n-m}_{1,\max}^\beta)\\
\left\|\frac{1}{n}\mathbb{X}_{\ell}^T \mathbb{Y}_{\ell}-
\frac{1}{n}\mathbb{X}_{0,\ell}^T \mathbb{Y}_{0,\ell}\right\|_{\infty} 
&= O( \norm{\hat{m}_n-m}_{1,\max}^\beta)
\end{aligned}
\label{eq::reg3}
\end{equation}
so the parameter estimation error \eqref{eq::reg2} is
$O( \norm{\hat{m}_n-m}_{1,\max}^\beta) + O_{\P}\left( \frac{1}{\sqrt{n}}\right)$.

Under assumption (K1--2) and using Theorem~\ref{thm::KDE} (the same result works for kernel regression),
$$O( \norm{\hat{m}_n-m}_{1,\max}) = O(h^2)+O_{\P}\left(\sqrt{\frac{\log n}{nh^{d+2}}}\right).$$
Thus, with the choice that $h = O\left(\left(\frac{\log n}{n}\right)^{1/(d+6)}\right)$,
we have $O( \norm{\hat{m}_n-m}_{1,\max}) = O_\P\left(\left(\frac{\log n}{n}\right)^{2/(d+6)}\right)$,
which proves equation \eqref{eq::thm::MSR2}.

\end{proof}

\begin{proof}[ of Theorem~\ref{thm::MSS}]
%We first prove that the `parameters' for each Morse-Smale cell
%are consistently estimated
%and then extend this to prove the desire result.

%{\bf Part 1: Parameter consistency.}
We first derive the explicit form of the parameters
$(\eta^\dagger_\ell, \gamma^\dagger_\ell)$ within cell $E_\ell$.
Note that the parameters are obtained by \eqref{eq::MSS1}:
$$
(\eta_\ell^\dagger, \gamma_\ell^\dagger) = \underset{\eta, \gamma}{\sf argmin}\,\, \int_{E_\ell} 
\left(f(x)-\eta-\gamma^Tx\right)^2 dx.
$$
Now we define a random variable $U_\ell\in\R^d$ that is uniformly distributed
over $E_\ell$.
Then equation \eqref{eq::MSS1} is equivalent to
\begin{equation}
(\eta_\ell^\dagger, \gamma_\ell^\dagger) = \underset{\eta, \gamma}{\sf argmin}\,\, \mathbb{E}\left(
\left(f(U_\ell)-\eta-\gamma^TU_\ell\right)^2 \right).
\end{equation}
The analytical solution to the above problem is
\begin{equation}
\left( \begin{array}{c}
\eta^\dagger_\ell \\
\gamma^\dagger_\ell \end{array} \right)
= 
\left( \begin{array}{cc}
1& \mathbb{E}(U_\ell)^T \\
\mathbb{E}(U_\ell) & \mathbb{E}(U_\ell U_\ell^T)\end{array} \right)^{-1}
\left( \begin{array}{c}
\mathbb{E}(f(U_\ell)) \\
\mathbb{E}(U_\ell f(U_\ell))\end{array} \right)
\label{eq::pf::MSS1}
\end{equation}

Now we consider another smooth function $\tilde{f}$ that is close to $f$ such that
$\norm{\tilde{f}-f}^*_{3,\max}$ is small so
we can apply Theorem~\ref{thm::Haus}
to obtain consistency for both descending $d$-manifolds and ascending $0$-manifolds.
Note that by Lemma~\ref{lem::critical},
all the critical points are close to each other
and after relabeling, each $d$-cell $E_\ell$ of $f$
is estimated by another $d$-cell $\tilde{E}_\ell$ of $\tilde{f}$.
Theorem \ref{thm::Haus} further implies
that 
\begin{equation}
\begin{aligned}
\left|\Leb (\tilde{E}_\ell) - \Leb(E_\ell)\right| &= O\left(\norm{\tilde{f}-f}_{1,\max}\right)\\
\Leb\left(\tilde{E}_\ell\triangle E_\ell\right)&= O\left(\norm{\tilde{f}-f}_{1,\max}\right),
\end{aligned}
\label{eq::pf::MSS2}
\end{equation}
where $\Leb(A)$ is the Lebesgue measure for set $A$
and $A\triangle B = (A\backslash B)\cup (B\backslash A)$ is the symmetric difference.
By simple algebra, equation \eqref{eq::pf::MSS2} implies that 
\begin{equation}
\begin{aligned}
\|\mathbb{E}(\tilde{U}_\ell) &-\mathbb{E}(U_\ell)\|_{\infty} = O\left(\norm{\tilde{f}-f}_{1,\max}\right)\\
\|\mathbb{E}(\tilde{U}_\ell \tilde{U}_\ell^T) &- \mathbb{E}(U_\ell U_\ell^T)\|_{\infty} = O\left(\norm{\tilde{f}-f}_{1,\max}\right)\\
|\E (\tilde{f}(\tilde{U}_\ell))&- \E (f(U_\ell))|  = O\left(\norm{\tilde{f}-f}^*_{1,\max}\right)\\
\|\E (\tilde{U}_\ell\tilde{f}(\tilde{U}_\ell))&- \E (U_\ell f(U_\ell))\|_{\infty}  = O\left(\norm{\tilde{f}-f}^*_{1,\max}\right).
\end{aligned}
\label{eq::pf::MSS3}
\end{equation}

By \eqref{eq::pf::MSS3} and the analytic solution to 
$(\tilde{\eta}^\dagger_\ell, \tilde{\gamma}^\dagger_\ell)$ from \eqref{eq::pf::MSS1},
we have proved
\begin{equation}
\left\|
\left( \begin{array}{c}
\tilde{\eta}^\dagger_\ell \\
\tilde{\gamma}^\dagger_\ell \end{array} \right)
-
\left( \begin{array}{c}
\eta^\dagger_\ell \\
\gamma^\dagger_\ell \end{array} \right)
 \right\|_{\infty}
 = O\left(\norm{\tilde{f}-f}^*_{1,\max}\right).
 \label{eq::pf::MSS4}
\end{equation}
Since the bound does not depend on the cell indices $\ell$,
\eqref{eq::pf::MSS4} holds uniformly for all $\ell= 1,\cdots, K$.

%%%%
\begin{comment}
%%%%
{\bf Part 2: Extend to the majority region.}
We splits the support $\K$ into two parts,
the first part is where $E_\ell$ and $\tilde{E}_\ell$
agrees with each other
while the second part is the remaining regions.
Let $\G = \bigcup_{\ell} (E_\ell \cap \tilde{E}_\ell)$ be the set
where they agree with each other.
Note that the regions not in $\G$, denoted as $\mathbb{N}_n$, have Lebesgue measure
\begin{equation}
\Leb(\K\backslash\G) = \Leb\left( \bigcup_{\ell} (E_\ell \triangle \tilde{E}_\ell)\right) = 
O\left(\norm{\tilde{f}-f}_{1,\max}\right).
\label{eq::pf::MSS5}
\end{equation}

By the result of part 1, uniformly for all $x\in\G$
we have 
\begin{equation}
|f_{\MS}(x)- \tilde{f}_{\MS}(x)| = O\left(\norm{\tilde{f}-f}^*_{1,\max}\right).
\label{eq::pf::MSS6}
\end{equation}
Thus, putting \eqref{eq::pf::MSS5} and \eqref{eq::pf::MSS6} together,
we have proved the desire result.
%%%%
\end{comment}
%%%%

%Thus, equation \eqref{eq::pf::MSS5} implies
%that the set we cannot apply \eqref{eq::pf::MSS6}
%has Lebesgue measure

\end{proof}

\bibliographystyle{abbrvnat}
\bibliography{Morse.bib}

\begin{thebibliography}{55}
\providecommand{\natexlab}[1]{#1}
\providecommand{\url}[1]{\texttt{#1}}
\expandafter\ifx\csname urlstyle\endcsname\relax
  \providecommand{\doi}[1]{doi: #1}\else
  \providecommand{\doi}{doi: \begingroup \urlstyle{rm}\Url}\fi

\bibitem[Arias-Castro et~al.(2016)Arias-Castro, Mason, and
  Pelletier]{arias2016estimation}
E.~Arias-Castro, D.~Mason, and B.~Pelletier.
\newblock On the estimation of the gradient lines of a density and the
  consistency of the mean-shift algorithm.
\newblock \emph{Journal of Machine Learning Research}, 17\penalty0
  (43):\penalty0 1--28, 2016.

\bibitem[Audibert et~al.(2007)Audibert, Tsybakov, et~al.]{audibert2007fast}
J.-Y. Audibert, A.~B. Tsybakov, et~al.
\newblock Fast learning rates for plug-in classifiers.
\newblock \emph{The Annals of statistics}, 35\penalty0 (2):\penalty0 608--633,
  2007.

\bibitem[Azizyan et~al.(2015)Azizyan, Chen, Singh, and
  Wasserman]{azizyan2015risk}
M.~Azizyan, Y.-C. Chen, A.~Singh, and L.~Wasserman.
\newblock Risk bounds for mode clustering.
\newblock \emph{arXiv preprint arXiv:1505.00482}, 2015.

\bibitem[Azzalini and Torelli(2007)]{azzalini2007clustering}
A.~Azzalini and N.~Torelli.
\newblock Clustering via nonparametric density estimation.
\newblock \emph{Statistics and Computing}, 17\penalty0 (1):\penalty0 71--80,
  2007.

\bibitem[Bacchetti(1989)]{bacchetti1989additive}
P.~Bacchetti.
\newblock Additive isotonic models.
\newblock \emph{Journal of the American Statistical Association}, 84\penalty0
  (405):\penalty0 289--294, 1989.

\bibitem[Banyaga and Hurtubise(2004)]{banyaga2004lectures}
A.~Banyaga and D.~Hurtubise.
\newblock \emph{Lectures on Morse homology}, volume~29.
\newblock Springer Science \& Business Media, 2004.

\bibitem[Baringhaus and Franz(2004)]{baringhaus2004new}
L.~Baringhaus and C.~Franz.
\newblock On a new multivariate two-sample test.
\newblock \emph{Journal of multivariate analysis}, 88\penalty0 (1):\penalty0
  190--206, 2004.

\bibitem[Barlow et~al.(1972)Barlow, Bartholomew, Bremner, and
  Brunk]{barlow1972statistical}
R.~E. Barlow, D.~J. Bartholomew, J.~Bremner, and H.~D. Brunk.
\newblock \emph{Statistical inference under order restrictions: the theory and
  application of isotonic regression}.
\newblock Wiley New York, 1972.

\bibitem[Bhatia(1997)]{Bhatia1997}
R.~Bhatia.
\newblock \emph{Matrix Analysis}.
\newblock Springer, 1997.

\bibitem[Bredon(1993)]{bredon1993topology}
G.~E. Bredon.
\newblock \emph{Topology and geometry}, volume 139.
\newblock Springer Science \& Business Media, 1993.

\bibitem[Brinkman et~al.(2007)Brinkman, Gasparetto, Lee, Ribickas, Perkins,
  Janssen, Smiley, and Smith]{brinkman2007high}
R.~R. Brinkman, M.~Gasparetto, S.-J.~J. Lee, A.~J. Ribickas, J.~Perkins,
  W.~Janssen, R.~Smiley, and C.~Smith.
\newblock High-content flow cytometry and temporal data analysis for defining a
  cellular signature of graft-versus-host disease.
\newblock \emph{Biology of Blood and Marrow Transplantation}, 13\penalty0
  (6):\penalty0 691--700, 2007.

\bibitem[Chac{\'o}n and Duong(2013)]{Chacon2012}
J.~Chac{\'o}n and T.~Duong.
\newblock Data-driven density derivative estimation, with applications to
  nonparametric clustering and bump hunting.
\newblock \emph{Electronic Journal of Statistics}, 7:\penalty0 499--532, 2013.

\bibitem[Chac{\'o}n et~al.(2011)Chac{\'o}n, Duong, and Wand]{Chacon2011}
J.~Chac{\'o}n, T.~Duong, and M.~Wand.
\newblock Asymptotics for general multivariate kernel density derivative
  estimators.
\newblock \emph{Statistica Sinica}, 2011.

\bibitem[Chac{\'o}n et~al.(2015)]{chacon2015population}
J.~E. Chac{\'o}n et~al.
\newblock A population background for nonparametric density-based clustering.
\newblock \emph{Statistical Science}, 30\penalty0 (4):\penalty0 518--532, 2015.

\bibitem[Chazal et~al.(2014)Chazal, Fasy, Lecci, Michel, Rinaldo, and
  Wasserman]{chazal2014robust}
F.~Chazal, B.~T. Fasy, F.~Lecci, B.~Michel, A.~Rinaldo, and L.~Wasserman.
\newblock Robust topological inference: Distance to a measure and kernel
  distance.
\newblock \emph{arXiv preprint arXiv:1412.7197}, 2014.

\bibitem[Chen et~al.(2015)Chen, Genovese, Wasserman,
  et~al.]{chen2015asymptotic}
Y.-C. Chen, C.~R. Genovese, L.~Wasserman, et~al.
\newblock Asymptotic theory for density ridges.
\newblock \emph{The Annals of Statistics}, 43\penalty0 (5):\penalty0
  1896--1928, 2015.

\bibitem[Chen et~al.(2016)Chen, Genovese, Wasserman,
  et~al.]{chen2016comprehensive}
Y.-C. Chen, C.~R. Genovese, L.~Wasserman, et~al.
\newblock A comprehensive approach to mode clustering.
\newblock \emph{Electronic Journal of Statistics}, 10\penalty0 (1):\penalty0
  210--241, 2016.

\bibitem[Cheng(1995)]{cheng1995mean}
Y.~Cheng.
\newblock Mean shift, mode seeking, and clustering.
\newblock \emph{Pattern Analysis and Machine Intelligence, IEEE Transactions
  on}, 17\penalty0 (8):\penalty0 790--799, 1995.

\bibitem[Cohen-Steiner et~al.(2007)Cohen-Steiner, Edelsbrunner, and
  Harer]{cohen2007stability}
D.~Cohen-Steiner, H.~Edelsbrunner, and J.~Harer.
\newblock Stability of persistence diagrams.
\newblock \emph{Discrete \& Computational Geometry}, 37\penalty0 (1):\penalty0
  103--120, 2007.

\bibitem[Comaniciu and Meer(2002)]{comaniciu2002mean}
D.~Comaniciu and P.~Meer.
\newblock Mean shift: A robust approach toward feature space analysis.
\newblock \emph{Pattern Analysis and Machine Intelligence, IEEE Transactions
  on}, 24\penalty0 (5):\penalty0 603--619, 2002.

\bibitem[Duong(2013)]{duong2013local}
T.~Duong.
\newblock Local significant differences from nonparametric two-sample tests.
\newblock \emph{Journal of Nonparametric Statistics}, 25\penalty0 (3):\penalty0
  635--645, 2013.

\bibitem[Duong et~al.(2007)]{duong2007ks}
T.~Duong et~al.
\newblock ks: Kernel density estimation and kernel discriminant analysis for
  multivariate data in r.
\newblock \emph{Journal of Statistical Software}, 21\penalty0 (7):\penalty0
  1--16, 2007.

\bibitem[Efron(1979)]{Efron1979}
B.~Efron.
\newblock Bootstrap methods: Another look at the jackknife.
\newblock \emph{Annals of Statistics}, 7\penalty0 (1):\penalty0 1--26, 1979.

\bibitem[Einmahl and Mason(2005)]{Einmahl2005}
U.~Einmahl and D.~M. Mason.
\newblock Uniform in bandwidth consistency for kernel-type function estimators.
\newblock \emph{The Annals of Statistics}, 2005.

\bibitem[Fukunaga and Hostetler(1975)]{fukunaga1975estimation}
K.~Fukunaga and L.~Hostetler.
\newblock The estimation of the gradient of a density function, with
  applications in pattern recognition.
\newblock \emph{Information Theory, IEEE Transactions on}, 21\penalty0
  (1):\penalty0 32--40, 1975.

\bibitem[Genovese et~al.(2012)Genovese, Perone-Pacifico, Verdinelli, and
  Wasserman]{genovese2012geometry}
C.~R. Genovese, M.~Perone-Pacifico, I.~Verdinelli, and L.~Wasserman.
\newblock The geometry of nonparametric filament estimation.
\newblock \emph{Journal of the American Statistical Association}, 107\penalty0
  (498):\penalty0 788--799, 2012.

\bibitem[Genovese et~al.(2014)Genovese, Perone-Pacifico, Verdinelli, Wasserman,
  et~al.]{genovese2014nonparametric}
C.~R. Genovese, M.~Perone-Pacifico, I.~Verdinelli, L.~Wasserman, et~al.
\newblock Nonparametric ridge estimation.
\newblock \emph{The Annals of Statistics}, 42\penalty0 (4):\penalty0
  1511--1545, 2014.

\bibitem[Gerber and Potter(2011)]{gerber2011data}
S.~Gerber and K.~Potter.
\newblock Data analysis with the morse-smale complex: The msr package for r.
\newblock \emph{Journal of Statistical Software}, 2011.

\bibitem[Gerber et~al.(2010)Gerber, Bremer, Pascucci, and
  Whitaker]{gerber2010visual}
S.~Gerber, P.-T. Bremer, V.~Pascucci, and R.~Whitaker.
\newblock Visual exploration of high dimensional scalar functions.
\newblock \emph{Visualization and Computer Graphics, IEEE Transactions on},
  16\penalty0 (6):\penalty0 1271--1280, 2010.

\bibitem[Gerber et~al.(2013)Gerber, R{\"u}bel, Bremer, Pascucci, and
  Whitaker]{gerber2013morse}
S.~Gerber, O.~R{\"u}bel, P.-T. Bremer, V.~Pascucci, and R.~T. Whitaker.
\newblock Morse--smale regression.
\newblock \emph{Journal of Computational and Graphical Statistics}, 22\penalty0
  (1):\penalty0 193--214, 2013.

\bibitem[Gine and Guillou(2002)]{Gine2002}
E.~Gine and A.~Guillou.
\newblock Rates of strong uniform consistency for multivariate kernel density
  estimators.
\newblock \emph{In Annales de l'Institut Henri Poincare (B) Probability and
  Statistics}, 2002.

\bibitem[Helgason(1979)]{helgason1979differential}
S.~Helgason.
\newblock \emph{Differential geometry, Lie groups, and symmetric spaces},
  volume~80.
\newblock Academic press, 1979.

\bibitem[Hubert and Arabie(1985)]{hubert1985comparing}
L.~Hubert and P.~Arabie.
\newblock Comparing partitions.
\newblock \emph{Journal of classification}, 2\penalty0 (1):\penalty0 193--218,
  1985.

\bibitem[Kruskal(1964)]{kruskal1964multidimensional}
J.~B. Kruskal.
\newblock Multidimensional scaling by optimizing goodness of fit to a nonmetric
  hypothesis.
\newblock \emph{Psychometrika}, 29\penalty0 (1):\penalty0 1--27, 1964.

\bibitem[Li et~al.(2007)Li, Ray, and Lindsay]{Li2007}
J.~Li, S.~Ray, and B.~G. Lindsay.
\newblock A nonparametric statistical approach to clustering via mode
  identification.
\newblock \emph{Journal of Machine Learning Research}, 2007.

\bibitem[Milnor(1963)]{milnor1963morse}
J.~W. Milnor.
\newblock \emph{Morse theory}.
\newblock Number~51. Princeton university press, 1963.

\bibitem[Morse(1925)]{morse1925relations}
M.~Morse.
\newblock Relations between the critical points of a real function of n
  independent variables.
\newblock \emph{Transactions of the American Mathematical Society}, 27\penalty0
  (3):\penalty0 345--396, 1925.

\bibitem[Morse(1930)]{morse1930foundations}
M.~Morse.
\newblock The foundations of a theory of the calculus of variations in the
  large in m-space (second paper).
\newblock \emph{Transactions of the American Mathematical Society}, 32\penalty0
  (4):\penalty0 599--631, 1930.

\bibitem[Nadaraya(1964)]{nadaraya1964estimating}
E.~A. Nadaraya.
\newblock On estimating regression.
\newblock \emph{Theory of Probability \& Its Applications}, 9\penalty0
  (1):\penalty0 141--142, 1964.

\bibitem[Paris and Durand(2007)]{paris2007topological}
S.~Paris and F.~Durand.
\newblock A topological approach to hierarchical segmentation using mean shift.
\newblock In \emph{Computer Vision and Pattern Recognition, 2007. CVPR'07. IEEE
  Conference on}, pages 1--8. IEEE, 2007.

\bibitem[Rand(1971)]{rand1971objective}
W.~M. Rand.
\newblock Objective criteria for the evaluation of clustering methods.
\newblock \emph{Journal of the American Statistical association}, 66\penalty0
  (336):\penalty0 846--850, 1971.

\bibitem[Rinaldo et~al.(2010)Rinaldo, Wasserman,
  et~al.]{rinaldo2010generalized}
A.~Rinaldo, L.~Wasserman, et~al.
\newblock Generalized density clustering.
\newblock \emph{The Annals of Statistics}, 38\penalty0 (5):\penalty0
  2678--2722, 2010.

\bibitem[Rinaldo et~al.(2012)Rinaldo, Singh, Nugent, and
  Wasserman]{rinaldo2012stability}
A.~Rinaldo, A.~Singh, R.~Nugent, and L.~Wasserman.
\newblock Stability of density-based clustering.
\newblock \emph{The Journal of Machine Learning Research}, 13\penalty0
  (1):\penalty0 905--948, 2012.

\bibitem[Rizzo and Szekely(2008)]{rizzo2008energy}
M.~Rizzo and G.~Szekely.
\newblock energy: E-statistics (energy statistics).
\newblock \emph{R package version}, 1:\penalty0 1, 2008.

\bibitem[Rizzo et~al.(2010)Rizzo, Sz{\'e}kely, et~al.]{rizzo2010disco}
M.~L. Rizzo, G.~J. Sz{\'e}kely, et~al.
\newblock Disco analysis: A nonparametric extension of analysis of variance.
\newblock \emph{The Annals of Applied Statistics}, 4\penalty0 (2):\penalty0
  1034--1055, 2010.

\bibitem[Silverman(1986)]{Silverman1986}
B.~W. Silverman.
\newblock \emph{Density Estimation for Statistics and Data Analysis}.
\newblock Chapman and Hall, 1986.

\bibitem[Singh et~al.(2009)Singh, Scott, Nowak, et~al.]{singh2009adaptive}
A.~Singh, C.~Scott, R.~Nowak, et~al.
\newblock Adaptive hausdorff estimation of density level sets.
\newblock \emph{The Annals of Statistics}, 37\penalty0 (5B):\penalty0
  2760--2782, 2009.

\bibitem[Sz{\'e}kely and Rizzo(2004)]{szekely2004testing}
G.~J. Sz{\'e}kely and M.~L. Rizzo.
\newblock Testing for equal distributions in high dimension.
\newblock \emph{InterStat}, 5, 2004.

\bibitem[Szekely and Rizzo(2005)]{szekely2005hierarchical}
G.~J. Szekely and M.~L. Rizzo.
\newblock Hierarchical clustering via joint between-within distances: Extending
  ward's minimum variance method.
\newblock \emph{Journal of classification}, 22\penalty0 (2):\penalty0 151--183,
  2005.

\bibitem[Sz{\'e}kely and Rizzo(2005)]{szekely2005new}
G.~J. Sz{\'e}kely and M.~L. Rizzo.
\newblock A new test for multivariate normality.
\newblock \emph{Journal of Multivariate Analysis}, 93\penalty0 (1):\penalty0
  58--80, 2005.

\bibitem[Sz{\'e}kely and Rizzo(2013)]{szekely2013energy}
G.~J. Sz{\'e}kely and M.~L. Rizzo.
\newblock Energy statistics: A class of statistics based on distances.
\newblock \emph{Journal of statistical planning and inference}, 143\penalty0
  (8):\penalty0 1249--1272, 2013.

\bibitem[Vapnik and Chervonenkis(1971)]{vapnik1971uniform}
V.~N. Vapnik and A.~Y. Chervonenkis.
\newblock On the uniform convergence of relative frequencies of events to their
  probabilities.
\newblock \emph{Theory of Probability \& Its Applications}, 16\penalty0
  (2):\penalty0 264--280, 1971.

\bibitem[Vedaldi and Soatto(2008)]{vedaldi2008quick}
A.~Vedaldi and S.~Soatto.
\newblock Quick shift and kernel methods for mode seeking.
\newblock In \emph{European Conference on Computer Vision}, pages 705--718.
  Springer, 2008.

\bibitem[Vinh et~al.(2009)Vinh, Epps, and Bailey]{vinh2009information}
N.~X. Vinh, J.~Epps, and J.~Bailey.
\newblock Information theoretic measures for clusterings comparison: is a
  correction for chance necessary?
\newblock In \emph{Proceedings of the 26th Annual International Conference on
  Machine Learning}, pages 1073--1080. ACM, 2009.

\bibitem[Wasserman(2006)]{wasserman2006all}
L.~Wasserman.
\newblock \emph{All of nonparametric statistics}.
\newblock Springer, 2006.

\end{thebibliography}

\end{document}